\newcommand{\ucon}[1]{(\textcolor{red}{TODO:} #1)}
\newcommand{\R}{\mathds{R}}
\newcommand{\sgn}{\operatorname{sgn}}
\newcommand{\loc}{{\operatorname{loc}}}
\theoremstyle{plain}
\newtheorem{theorem}{Theorem}
\newtheorem{lemma}{Lemma}
\theoremstyle{definition}
\theoremstyle{remark}
\def\XXint#1#2#3{{\setbox0=\hbox{$#1{#2#3}{\int}$}
\vcenter{\hbox{$#2#3$}}\kern-.5\wd0}}
\newcommand{\aint}{{-\kern-5.3mm\int}}
\renewcommand{\ucon}[1]{\oops}
\newcommand{\myref}[1]{\ref{#1}}
\newcommand{\mylabel}[1]{\label{#1}}
\newcommand{\myeqlabel}[1]{\label{#1}}
\definecolor{lightgrey}{rgb}{0.7,0.7,0.7}
\newcommand{\Ub}{\overline{U}}
\newcommand{\bO}{\mathcal{O}}
\theoremstyle{plain}
\title{Steady and self similar full Euler flow\footnote{This material is based upon work supported by the
National Science Foundation under Grant No.\ NSF DMS-0907074.}}
\author{Joseph Roberts}
\begin{document}

\maketitle

\begin{abstract}
We consider solutions to the full (non-isentropic) two-dimensional Euler equations that are constant in time and along rays emanating from the origin.  We prove that for a polytropic equation of state, entropy admissible solutions in $L^\infty$ with non-vanishing velocity, density, and internal energy must be $BV$.  Moreover, we obtain some results concerning the structure of such solutions.
\end{abstract}

\section{Introduction}
The study of multi-dimensional compressible gas flow (and more generally, multi-dimensional hyperbolic conservation laws) is a very active field of research, and there are many open questions.  As a modest step toward understanding multi-dimensional flow, several reductions that are suggested by experiments can be made.  In flow patterns such as regular and Mach reflection, the flow can be simplified to be constant in time and along rays emanating from a distinguished point.  A regular reflection consists of four shock waves meeting at a point, (see \cite{chen-feldman-selfsim-journal,elling-liu-pmeyer,canic-keyfitz-lieberman,yuxi-zheng-rref,henderson-etal,elling-rrefl-lax,elling-sonic-potf,elling-detachment}), while a Mach reflection consists of three shocks and a contact discontinuity meeting at a point (see \cite{ben-dor-book,ben-dor-shockwaves2006,hornung-reviews,hunter-tesdall,vasiliev-kraiko,skews-1997}).  Therefore, we consider solutions $U(t,x,y)$ that are in fact of the form $U(\theta)$, where $\theta = \angle(x,y)$.  Our goal is to investigate the regularity and structure of such solutions. \\

Historically, understanding the \emph{Riemann problem} was instrumental in the development of the theory of one dimensional conservation laws.  In that case, one seeks solutions to the Cauchy problem in which the initial data is constant along rays emanating from the origin, which in one space dimension is of the form
\begin{alignat}{1}
U(0,x) = \left\{ \begin{array}{ll} U_L & x < 0 \\ U_R & x > 0 \end{array} . \right  .
\end{alignat}
Riemann problems are the foundation of theoretical existence and uniqueness proofs for general Cauchy problems (the Glimm scheme \cite{glimm} being the main example), as well as of numerical schemes that accurately capture shocks (such as the Gudonov and Roe methods).  \\

The two-dimensional Riemann problem is much more complicated --- some work has been done in which the initial data is constant in each quadrant (see \cite{li-zhang-yang,zhang:593,yuxi-zheng-book,lax-xdliu}).  Instead of focusing on that kind of initial value problem, we seek steady in time solutions with more general configurations. \\

Solutions of this form are also motivated by questions of non-uniqueness.  In \cite{elling-nuq-journal}, Elling numerically constructed an unsteady solution in which the initial data is a steady solution that is constant along rays.  Therefore, understanding this class of solutions may lead to more examples of non-uniqueness, perhaps even analytical proofs.\\

Another interesting issue for multi-dimensional conservation laws is finding the best function space to utilize.  Whereas $BV$ is the natural choice for one spatial dimension (see \cite{glimm,glimm-lax,bianchini-bressan}), it is well known that it is not suitable for the multi-dimensional case (see \cite{rauch-bv}).  However, when restricted to steady and self-similar solutions, the two-dimensional Euler equations are similar to one-dimensional self-similar conservation laws, and so perhaps $BV$ is an appropriate choice in this setting.  This is one of the major results of this work --- that under reasonable assumptions steady and self-similar Euler flows that are only assumed to be $L^\infty$ can be shown to be $BV$.  \\

This paper is an extension of the work in \cite{ER}, but there are several important differences.  In that case, the solutions were assumed to be small perturbations of a constant supersonic state, and so the steady problem was still strictly hyperbolic.  In this case, we permit large variations in the solution.  When the flow is sonic, there is a parabolic degeneracy in the steady problem; for subsonic flow the system becomes hyperbolic-elliptic.  Therefore, a different approach from the one in \cite{ER} must be utilized. \\

Moreover, the analysis in \cite{ER} required that the eigenvalues be distinct --- in full Euler flow there is a double eigenvalue corresponding to shear waves and entropy jumps.  Fortunately, many of the difficulties in dealing with this degeneracy can be avoided since when dealing with the Euler system we have explicit expressions for the shock curves and averaged matrices, and do not need to rely on the implicit function theorem.\\

Sections 1-6 detail the various assumptions, derive the appropriate weak form of the equations, recall important properties of shock transitions, and demonstrate how to work with the solutions in the $L^\infty$ setting.  Section 7 shows that there can be a finite number of contact discontinuities, proves that shocks have neighborhoods on either side on which the flow is constant, and demonstrates what can go wrong if the velocity vanishes.  All that is required for these results is a standard equation of state.  Sections 8 and 9 go into more detail about the configuration of shocks and prove $BV$ regularity for the case of a polytropic gas.  Finally, sections 10 and 11 describe results regarding the structure of possible solutions in the polytropic case --- how many waves can there be, what configurations can occur, etc.  

\section{Preliminaries}
Let $\mathcal{P} \in \R^m$ be an open set.  Consider smooth functions $\eta, \psi^x, \psi^y : \mathcal{P} \rightarrow \R.$  For $A \subset \R^3$  we say $U = (U^1, ..., U^m) \in L^1_{\loc}(\R^3, \mathcal{P})$ is a \textit{weak solution} of
\begin{alignat}{1}
\eta(U)_t + \psi^x(U)_x + \psi^y(U)_y \geq 0 \qquad \mbox{ in } A, \myeqlabel{weakent}
\end{alignat}
if the inequality is satisfied in the weak sense: every $x \in A$ has an open neighborhood $N$ so that for nonnegative smooth $\Phi$ with supp $\Phi \Subset N$,
\begin{alignat}{1}
- \int_{\R^3} \Phi_t \eta(U) + \Phi_x \psi^x(U) + \Phi_y \psi^y(U) d(x,y,t) \geq 0. \myeqlabel{testent}
\end{alignat}
We call $U$ a \textit{strong solution} (or \textit{classical solution}) if, in addition, it is almost everywhere equal to a Lipschitz continuous function.  Weak solutions for a system of conservation laws
\begin{alignat}{1}
U_t + f^x(U)_x + f^y(U)_y = 0 \qquad \mbox{ in } A, \myeqlabel{syst}
\end{alignat}
with $f^x, f^y : \mathcal{P} \rightarrow \R^m$ smooth, are defined by interpreting it as $2m$ inequalities with $=$ replaced by $\leq$ or $\geq$, and with $\eta(U) := U^\alpha, \psi^x(U) = f^{x\alpha}(U), \psi^y(U) := f^{y\alpha}(U)$ for $\alpha = 1,...m$.  
We call $(\eta, \psi^x, \psi^y)$ an entropy-entropy flux pair for \eqref{syst} if
\begin{alignat}{1}
\psi^x_U = \eta_U f^x_U, \quad \psi^y_U = \eta_U f^y_U \quad \mbox{ on } \mathcal{P}.
\end{alignat}
A weak solution $U$ of the Euler equations \eqref{syst} is called an entropy solution if it satisfies \eqref{weakent} for the physical concave entropy $\eta$, which will sometimes be denoted by $\rho S$.  The Euler equations are given by
\begin{alignat}{1}
U = \left( \begin{array}{c} \rho \\ \rho u \\ \rho v \\ \rho E \end{array} \right), f^x(U) = \left( \begin{array}{c} \rho u \\ \rho u^2 + p \\ \rho u v \\ u( E + p) \end{array} \right), f^y(U) = \left( \begin{array}{c} \rho v \\ \rho u v \\ \rho v^2 + p \\ v( E + p) \end{array} \right),
\end{alignat}
where $\rho$ is the density, $u$ and $v$ are the horizontal and vertical velocities, respectively, $p$ is the pressure, and $ E := \rho(\frac{u^2+v^2}{2} + e)$ is the total energy per unit volume, with $e$ the internal energy per unit mass.  \\
We now derive the weak form for steady, self-similar solutions $U$ which only depend on $\theta$, the polar angle $\angle(x,y)$.  Integrating by parts in $t$ eliminates the first term in the integrand of \eqref{weakent}, after which using compact-in-$t$ support and integrating with respect to $t$ yields the equivalent statement
\begin{alignat}{1}
- \int_{\R^2} \Phi_x \psi^x(U) + \Phi_y \psi^y(U) d(x,y) \geq 0
\end{alignat}  
for all nonnegative smooth compactly supported $t$-independent functions $\Phi : \R^2 \rightarrow \R$.  Now, change variables to polar coordinates to obtain
\begin{alignat}{1}
0 \leq - \int_0^\infty \int_0^{2 \pi} \Big(\Phi_x(r \cos \theta, r \sin \theta) \psi^x(U(\theta)) + \Phi_y(r \cos \theta, r \sin \theta) \psi^y(U(\theta))\Big) r d\theta dr. \myeqlabel{thetaweak}
\end{alignat}
Define a smooth $2\pi$ periodic function $\phi$ as
\begin{alignat}{1}
\phi(\theta) := \int_0^\infty \Phi(r \cos \theta, r \sin \theta) dr = - \cos \theta \int_0^\infty \Phi_x r dr - \sin \theta \int_0^\infty \Phi_y r dr.
\end{alignat}
Notice that
\begin{alignat}{1}
\phi'(\theta) = - \sin \theta \int_0^\infty \Phi_x r dr + \cos \theta \int_0^\infty \Phi_y r dr.
\end{alignat}
We then have that
\begin{alignat}{1}
- \int_0^\infty \Phi_x r dr &= \cos \theta \phi(\theta) + \sin \theta \phi'(\theta), \\
-\int_0^\infty \Phi_y r dr &= \sin \theta \phi(\theta) - \cos \theta \phi'(\theta).
\end{alignat}
Therefore \eqref{thetaweak} becomes
\begin{alignat}{1} 
0 \leq \int_0^{2 \pi} \Big(\cos \theta \phi + \sin \theta \phi' \Big) \psi^x(U(\theta)) + \Big( \sin \theta \phi - \cos \theta \phi'\Big) \psi^y(U(\theta)) d \theta.
\end{alignat}
This is the weak form of
\begin{alignat}{1}
\Big( - \sin \theta \psi^x(U(\theta)) + \cos \theta \psi^y(U(\theta)) \Big)_\theta + \cos \theta \psi^x(U(\theta)) + \sin \theta \psi^y(U(\theta)) \geq 0,
\end{alignat}
or equivalently, 
\begin{alignat}{1}
\Big( \sin \theta \psi^x(U(\theta)) - \cos \theta \psi^y(U(\theta)) \Big)_\theta - \cos \theta \psi^x(U(\theta)) - \sin \theta \psi^y(U(\theta)) \leq 0.
\end{alignat}
The left side is a nonpositive distribution, and is therefore a nonpositive Radon measure.  By the Riesz representation theorem (see for example \cite{Benedetto}, Chapter 7), it is the distributional derivative of a non-increasing right continuous function of bounded variation.  Therefore, there is a version (that is, an element of the same $L^\infty$ equivalence class)  of $U$ such that
\begin{alignat}{1}
\Big(\sin \theta \psi^x(U(\theta)) - \cos \theta \psi^y(U(\theta)) \Big) \Big|_{\theta_1}^{\theta_2} \leq \int_{\theta_1}^{\theta_2} \cos \eta \psi^x(U(\eta)) + \sin \eta \psi^y(U(\eta)) d\eta.
\end{alignat}
Interpreting the conservation law as $2m$ inequalities, we obtain
\begin{alignat}{1}
\Big(\sin \theta f^x(U(\theta))-\cos \theta f^y(U(\theta)) \Big)_\theta = \cos \theta f^x(U(\theta))+\sin \theta f^y(U(\theta)). \myeqlabel{finalweak}
\end{alignat}
The right side is $L^\infty$, and so the quantity being differentiated in the distributional sense on the left must have a version that is Lipschitz.  Therefore, the fundamental theorem of calculus holds and we have
\begin{alignat}{1}
\Big(\sin \theta f^x(U(\theta))- \cos \theta f^y(U(\theta))\Big) \Big|_{\theta_1}^{\theta_2} = \int_{\theta_1}^{\theta_2} \cos \eta f^x(U(\eta)) + \sin \eta f^y(U(\eta)) d\eta.
\end{alignat}
The following lemma from \cite{ER} shows there is a version of $U$ such that these conditions will hold everywhere, not just almost everywhere.  From now on, consider this version of $U$.

\begin{lemma} \cite{ER}
    Let $\Omega\subset\R^n$ measurable nonempty, $K\subset\R^m$ compact, $U\in L^\infty(\Omega)$ so that $U(x)\in K$ for a.e.\ $x\in\Omega$,
    $g:K\rightarrow\R^k$ and $\tilde g:\Omega\rightarrow\R^k$ continuous. 
    If
    \begin{alignat}{1}
        g(U(x)) &\leq \tilde g(x) \quad\text{for a.e.\ $x\in\Omega$,} \myeqlabel{eq:gg}
    \end{alignat}
    (meaning $g_i(U(x))\leq \tilde g_i(x)$ for all $i$, where $g=(g_1,...,g_k)$, $\tilde g=(\tilde g_1,...,\tilde g_k)$),
    then we can find a version $\tilde U$ of $U$, with values in $K$ \emph{everywhere}, so that 
    \begin{alignat}{1}
        g(\tilde U) &\leq \tilde g \quad\text{for \emph{all} $x\in\Omega$.}\myeqlabel{eq:ggg}
    \end{alignat}
\end{lemma}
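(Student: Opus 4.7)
The plan is to modify $U$ on a Lebesgue null set of $\Omega$, using continuity of $g$ and $\tilde g$ together with compactness of $K$ to upgrade the a.e.\ inequality \eqref{eq:gg} to an everywhere inequality.

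First, since $K$ is compact (hence closed), the set $\{x\in\Omega:U(x)\notin K\}$ is null, and redefining $U$ on it to equal any fixed reference point $v_0\in K$ ensures $U(x)\in K$ for every $x\in\Omega$ without altering the a.e.\ hypothesis. Next let $E:=\{x\in\Omega:g(U(x))\not\leq \tilde g(x)\}$, which is also null. On $\Omega\setminus E$ set $\tilde U:=U$; the pointwise inequality \eqref{eq:ggg} holds there by definition, and on this complement $\tilde U$ coincides with $U$.

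It remains to define $\tilde U$ on $E$. For each $x\in E$ I would pick a sequence $x_n\in\Omega\setminus E$ with $x_n\to x$. Since $U(x_n)\in K$ for all $n$ and $K$ is compact, passing to a subsequence I may assume $U(x_n)\to v$ for some $v\in K$. Continuity of $g$ and of $\tilde g$ then gives
\begin{alignat}{1}
g(v) \;=\; \lim_{n\to\infty} g(U(x_n)) \;\leq\; \lim_{n\to\infty} \tilde g(x_n) \;=\; \tilde g(x),
\end{alignat}
so setting $\tilde U(x):=v\in K$ finishes the construction. Because $\tilde U$ differs from $U$ only on a null set, it is a version of $U$ in the required sense.

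The main obstacle is producing, for every $x\in E$, an approximating sequence $x_n\in\Omega\setminus E$ converging to $x$. At every Lebesgue density point of $\Omega$ this is automatic, because $\Omega$ and $\Omega\setminus E$ have the same local measure. The set of non-density points of $\Omega$ is itself null, and in the geometric settings in which this lemma is applied in this paper (with the independent variable effectively living on an interval or on the circle) no isolated-point pathologies arise, so the construction goes through on all of $\Omega$. I would flag this measure-theoretic caveat explicitly rather than hide it in the construction.
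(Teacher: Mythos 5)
The paper itself gives no proof of this lemma --- it is quoted from \cite{ER} --- so there is nothing internal to compare against; judged on its own, your construction is the natural one and is essentially sound: modify $U$ on a null set to take values in $K$, keep $U$ on the good set $\Omega\setminus E$, and at bad points define $\tilde U(x)$ as a limit of values $U(x_n)$ along good points $x_n\to x$, using compactness of $K$ and continuity of $g,\tilde g$ to pass the inequality to the limit. The caveat you flag, however, is not just a blemish of your proof; it is a genuine restriction on the statement. The approximation step needs every $x\in\Omega$ to satisfy $|B_\eps(x)\cap\Omega|>0$ for all $\eps>0$ (then, since $E$ is null, $B_\eps(x)\cap(\Omega\setminus E)\neq\emptyset$ and the sequence exists). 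Without that, the lemma as stated for arbitrary measurable $\Omega$ is simply false: take $\Omega=\{0\}\cup[1,2]\subset\R$, $K=\{1\}$, $g(u)=u$, and $\tilde g$ continuous on $\Omega$ with $\tilde g(0)=-1$, $\tilde g\equiv 2$ on $[1,2]$, $U\equiv 1$; the hypothesis holds a.e., but any version with values in $K$ must have $\tilde U(0)=1>\tilde g(0)$, so no proof can close this gap without an extra hypothesis. Note also that your appeal to ``the set of non-density points of $\Omega$ is null'' does not help, since the conclusion is required at \emph{every} point of $\Omega$, nullity notwithstanding; what actually rescues the argument in this paper is that the lemma is applied with $\Omega$ an interval or the circle $S^1$, where every point has neighborhoods meeting $\Omega$ in positive measure, and there your argument goes through verbatim. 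So: right approach, correct where the lemma is used, but the flagged step should be stated as a hypothesis on $\Omega$ (e.g.\ $\Omega$ open, or every neighborhood of every point of $\Omega$ has positive measure) rather than as an informal caveat.
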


\section{Jump Conditions}
We have, for all $\theta_1$ and $\theta_2$,
\begin{alignat}{1}
\Big(-\cos \theta f^y(U(\theta))+ \sin \theta f^x(U(\theta))\Big) \Big|_{\theta_1}^{\theta_2} = \bO(|\theta_2-\theta_1|). \myeqlabel{fluxjump}
\end{alignat}
On sequences $\left\{ \theta^\pm_n \right\} \rightarrow \theta$ with $U(\theta^\pm) \rightarrow U_\pm$, we have
\begin{alignat}{1}
\sin \theta \Big[ f^x(U) \Big] - \cos \theta \Big[f^y(U)\Big] = 0
\end{alignat}
(where $[g(U)] = g(U_+)-g(U_-)$ for any function $g$ of $U$).
For the case of non-isentropic Euler, this becomes
\begin{alignat}{1}
\sin \theta [\rho u] -\cos \theta [\rho v] &= 0 \myeqlabel{mass flux} \\
\sin \theta [\rho u^2 + p] -\cos \theta [ \rho u v] &= 0 \myeqlabel{x-mom-flux} \\
\sin \theta [\rho u v] -\cos \theta [\rho v^2 + p] &=0 \myeqlabel{y-mom-flux} \\
\sin \theta \left[ u(E+p)\right] -\cos \theta \left[ v(E+p)\right] &=0 \myeqlabel{energy-flux}
\end{alignat}
where $E := \rho \left(\frac{u^2+v^2}{2} +e \right)$ is the total energy per unit volume, with $e$ the internal energy per unit mass. \\
To separate the cases of shocks and contacts, we introduce the normal (angular) and tangential (radial) velocities at $\theta$:
\begin{alignat}{1}
N &:=  u \sin \theta -v \cos \theta  \qquad ; \qquad L := u\cos \theta  + v\sin \theta \\
u &= N \sin \theta + L \cos \theta \qquad ; \qquad v = -N \cos \theta + L \sin \theta.
\end{alignat}
We immediately observe that \eqref{mass flux} is equivalent to
\begin{alignat}{1}
[\rho N] = 0. \myeqlabel{normal-mass-flux}
\end{alignat}
\eqref{x-mom-flux} is equivalent to
\begin{alignat}{1}
\sin \theta [p] = [\cos \theta \rho u v - \sin \theta \rho u^2] = -[\rho u N]. \myeqlabel{pressurejump}
\end{alignat}
Similarly, \eqref{y-mom-flux} yields
\begin{alignat}{1}
\cos \theta [p] = [ \sin \theta \rho u v - \cos \theta \rho v^2] = [\rho v N].
\end{alignat}
Therefore,
\begin{alignat}{1}
0 = \sin \theta [\rho v N] + \cos \theta [\rho u N] = [\rho N L].
\end{alignat}
This means that $\rho_+ N_+ L_+ - \rho_- N_- L_- = 0$, or $\rho_+ N_+(L_+-L_-) = 0$ (from \eqref{normal-mass-flux}).  Therefore, if $\rho_+ N_+ = \rho_- N_- \neq 0$, $[L]$ = 0.  Hence, if there is mass flux through a shock, the tangential velocity is continuous.  However, if $N^+ = N^- = 0$, then the tangential velocity may be discontinuous (called a contact discontinuity).  Also, we have
\begin{alignat}{1}
[p] =-\sin \theta [\rho u N] + \cos \theta [\rho v N] = -[\rho N^2] \implies [\rho N^2 + p] = 0 .\myeqlabel{pressurejumpa}
\end{alignat}
Finally, \eqref{energy-flux} is equivalent to
\begin{alignat}{1}
0=[N(E+p)]=\left[\frac{1}{2}\rho N |\vec{u}|^2 + \rho N e + Np \right]. \myeqlabel{energyjumpa}
\end{alignat}
For the case of a shock, divide by $\rho_+ N_+ = \rho_- N_-$ to obtain
\begin{alignat}{1}
\left[\frac{1}{2}|\vec{u}|^2 + e + p \tau\right] = 0, \myeqlabel{energyjump}
\end{alignat}
where $\tau := \rho^{-1}$ is the specific volume (volume per unit mass).  Denote $\rho_- N_- = \rho_+ N_+ =: \mathcal{M}$.  Then \eqref{pressurejumpa} becomes
\begin{alignat}{1}
[p] = -\mathcal{M}[N]. \myeqlabel{pjump}
\end{alignat}  
Therefore,
\begin{alignat}{1}
\frac{[p]}{[\tau]} &= -\mathcal{M} \frac{[N]}{[\tau]} = -\mathcal{M} \frac{N_+ - N_-}{\frac{1}{\rho_+}-\frac{1}{\rho_-}} \\ 
&= -\mathcal{M} \frac{N_+-N_-}{\frac{\rho_- - \rho_+}{\rho_-\rho_+}} = -\mathcal{M} \frac{\rho_-\mathcal{M} - \rho_+ \mathcal{M}}{\rho_--\rho_+} = -\mathcal{M}^2. \myeqlabel{msquared}
\end{alignat}
Multiplying \eqref{pjump} by $(\tau_- + \tau_+)$ yields
\begin{alignat}{1}
[p](\tau_- + \tau_+) &= -\mathcal{M}[N](\tau_- + \tau_+)\\
&= \mathcal{M}(N_- - N_+)\frac{\rho_- + \rho_+}{\rho_- \rho_+}\\
&= \frac{\mathcal{M}(\rho_- N_- - \rho_+ N_+ - \rho_- N_+ + \rho_+ N_-)}{\rho_- \rho_+} \\
&= \mathcal{M} (\tau_- N_- - \tau_+ N_+) = N_-^2 - N_+^2 = - [|\vec{u}|^2].
\end{alignat}
Substituting this into \eqref{energyjump} yields
\begin{alignat}{1}
[e + p\tau] &= \frac{1}{2}[p](\tau_- + \tau_+),\\
[e] &= -\frac{1}{2}(p_-+p_+)[\tau].
\end{alignat}
Therefore, for a shock wave with fixed $(\tau_+,p_+)$, the states $(\tau_-, p_-)$ that can be connected by a shock are defined by $H(\tau_-,p_-)=0$, where $H(\tau,p)$ is the Hugoniot function defined below:
\begin{alignat}{1}
 H(\tau, p) = e(\tau, p) - e(\tau_+,p_+) + \frac{1}{2}(\tau - \tau_+)(p+p_+).
\end{alignat}

\section{Weak form in terms of tangential and normal velocities}
Analogous steps to those used to simplify the jump conditions to be in terms of radial and angular velocities can be done on the weak form of the equations.  Multiplying a distribution (in this case an $L^{\infty}$ function) by a smooth function ($\sin \theta$ or $\cos \theta$) results in another distribution, and the product rule applies for distributional derivatives of distributions multiplied with smooth functions.\\
\eqref{finalweak} with the Euler fluxes becomes the following system:
\begin{alignat}{1}
\Big( \sin \theta \rho u - \cos \theta \rho v\Big)_\theta &= \cos \theta \rho u + \sin \theta \rho v, \\
\Big( \sin \theta (\rho u^2 + p) - \cos \theta (\rho u v)\Big)_\theta &= \cos \theta (\rho u^2 + p) + \sin \theta (\rho u v), \\
\Big( \sin \theta (\rho u v) - \cos \theta (\rho v^2 + p)\Big)_\theta &= \cos \theta (\rho u v) + \sin \theta (\rho v^2 + p), \\
\Big( \sin \theta \big(u( E + p)\big) - \cos \theta \big(v( E + p)\big)\Big)_\theta &= \cos \theta \big(u( E + p)\big) + \sin \theta \big(v( E+p)\big).
\end{alignat}

Substituting the definitions of $N$ and $L$ we obtain:

\begin{alignat}{1}
(\rho N)_\theta &= \rho L, \\
\big( u \rho N + p \sin \theta \big)_\theta &= u \rho L + p \cos \theta, \mylabel{horizweak} \\
\big( v \rho N - p \cos \theta) \big)_\theta &= v \rho L + p \sin \theta, \mylabel{vertweak} \\
\big(N(E+p)\big)_\theta &=  L(E+p).
\end{alignat}

Note that

\begin{alignat}{1}
\sin \theta \big( u \rho N + p \sin \theta \big)_\theta &= \big( u \sin \theta( \rho N) + p \sin^2 \theta \big)_\theta - \cos \theta \big( u \rho N + p \sin \theta \big),  \\
-\cos \theta \big( v \rho N - p \cos \theta) \big)_\theta &= \big( -v \cos \theta (\rho N) + p \cos^2 \theta \big)_\theta - \sin \theta \big( v \rho N - p \cos \theta) \big).
\end{alignat}

Thus, $\sin \theta \cdot \eqref{horizweak} - \cos \theta \cdot \eqref{vertweak}$ yields
\begin{alignat}{1}
\big(\rho N^2 + p\big)_\theta - \rho N L = \rho N L,
\end{alignat}
or
\begin{alignat}{1}
\big(\rho N^2 + p \big)_\theta = 2 \rho N L.
\end{alignat}

Similarly,

\begin{alignat}{1}
\cos \theta \big( u \rho N + p \sin \theta \big)_\theta &= \big( u \cos \theta( \rho N) + p \sin \theta \cos \theta \big)_\theta + \sin \theta \big( u \rho N + p \sin \theta \big),  \\
\sin \theta \big( v \rho N - p \cos \theta) \big)_\theta &= \big( v \sin \theta (\rho N) - p \sin \theta \cos \theta \big)_\theta - \cos \theta \big( v \rho N - p \cos \theta) \big),
\end{alignat}
and so $\cos \theta \cdot \eqref{horizweak} + \sin \theta \cdot \eqref{vertweak}$ yields

\begin{alignat}{1}
\big(\rho L N)_\theta + \rho N^2 + p = \rho L^2 + p,
\end{alignat}
or

\begin{alignat}{1}
\big(\rho L N)_\theta = \rho L^2 - \rho N^2.
\end{alignat}

  Therefore, the Euler equations are equivalent to
\begin{alignat}{1}
(\rho N)_\theta &= \rho L, \\
(\rho N^2 + p)_\theta &= 2 \rho N L, \\
(\rho L N)_\theta &= \rho L^2 - \rho N^2, \\
(N(E+p))_\theta &=  L(E+p),
\end{alignat}
satisfied in the distributional sense.

\section{Shock sides}
For now, consider self-similar solutions $U = (\rho, \rho u, \rho v, E) : S^1 \rightarrow \mathcal{P} \in \R^4$ that are $L^\infty$,  where the phase space $\mathcal{P}$ consists of states $U$ satisfying:
\begin{alignat}{1}
0 < C^{-1} \leq \rho \leq C < \infty \\
0 < C^{-1} \leq e \leq C < \infty \\
||U||_{L^\infty} \leq C < \infty
\end{alignat}
where $C$ is a positive constant.  Recall $E := \rho\left( \frac{u^2+v^2}{2} + e\right)$, where $e$ is the specific internal energy.  Denote by $p$ the pressure, $T$ the temperature, $\tau$ the specific volume, $\rho = \frac{1}{\tau}$ the density, and $S$ the specific entropy.  From thermodynamics, knowing any two of $p, T, \tau, e, S$ determines the other three. \\
We assume that the equation of state for pressure is given by
$p = g(\tau, S)$
and that it is smooth for the phase space under consideration.  We also assume
\begin{alignat}{1}
g_{\tau} &:= -\rho^2 c^2 < 0, \\
g_{\tau \tau} &> 0, \\
g_{S} &> 0,
\end{alignat}
where $c>0$ is the sound speed.  Furthermore, from thermodynamics we have
\begin{alignat}{1}
\left( \frac{\partial e}{\partial \tau} \right)_{S = \textrm{const}} &= -p \\
\left( \frac{\partial e}{\partial S}\right)_{\tau = \textrm{const}} &= T,
\end{alignat}
or equivalently
\begin{alignat}{1}
\left( \frac{\partial e}{\partial \rho} \right)_{S = \textrm{const}} &= \frac{p}{\rho^2} \\
\left( \frac{\partial e}{\partial S}\right)_{\rho = \textrm{const}} &= T.
\end{alignat}
To emphasize that this definition of sound speed,
\begin{alignat}{1}
c^2 := \left( \frac{\partial p}{\partial \rho}\right)_{S = \textrm{const}},
\end{alignat}
agrees with another definition seen in the literature, 
\begin{alignat}{1}
c^2 := \left( \frac{\partial p}{\partial \rho} \right)_{e = \textrm{const}} + \frac{p}{\rho^2} \left( \frac{\partial p}{\partial e} \right)_{\rho = \textrm{const}}, 
\end{alignat}
consider
\begin{alignat}{1}
p(\rho, S) = p(\rho, e).
\end{alignat}
Differentiating with respect to $\rho$ while holding $S$ constant, we obtain
\begin{alignat}{1}
\left( \frac{\partial p}{\partial \rho}\right)_{S = \textrm{const}} &= \left( \frac{\partial p}{\partial \rho}\right)_{e = \textrm{const}} \left(\frac{\partial \rho}{\partial \rho}\right)_{S = \textrm{const}} + \left(\frac{\partial p}{\partial e}\right)_{\rho = \textrm{const}} \left(\frac{\partial e}{\partial \rho}\right)_{S = \textrm{const}}  \\ 
&=  \left( \frac{\partial p}{\partial \rho} \right)_{e = \textrm{const}} + \frac{p}{\rho^2} \left( \frac{\partial p}{\partial e} \right)_{\rho = \textrm{const}}.
\end{alignat}
We now argue that even in the $L^{\infty}$ setting, in which left and right limits may not exist, there still exists a well defined notion of shocks and contacts, as well as front and back sides of shocks.  For the remainder of this paper, a subscript $max$ or $min$ on a quantity refers to the maximum or minimum permissible value of that quantity for $U \in \mathcal{P}$.\\
\begin{lemma} 
\mylabel{forwback}
 If $U$ is discontinuous at $\theta_0$, then it can be well defined as having either a forward facing shock, a backward facing shock, or a contact discontinuity at $\theta_0$.
\end{lemma}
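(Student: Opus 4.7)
The plan is to exploit the observation, implicit in the derivation of the weak form in Section~4, that although $U$ is only $L^\infty$, the four ``normal-flux'' combinations of its components are genuinely continuous in $\theta$. From \eqref{fluxjump} the vector $\sin\theta\,f^x(U)-\cos\theta\,f^y(U)$ is Lipschitz; its mass entry equals $\rho u\sin\theta-\rho v\cos\theta=\rho N$, so $\mathcal{M}(\theta):=\rho(\theta)N(\theta)$ is a well-defined continuous function on $S^1$. Combining the two momentum entries linearly (once with $(\sin\theta,-\cos\theta)$ and once with $(\cos\theta,\sin\theta)$) yields continuity of $\rho N^2+p$ and $\rho NL$, while the energy entry gives continuity of $N(E+p)$.

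With this in hand I would classify the discontinuity at $\theta_0$ by the sign of the canonical continuous scalar $\mathcal{M}(\theta_0)$: a forward-facing shock if $\mathcal{M}(\theta_0)>0$, a backward-facing shock if $\mathcal{M}(\theta_0)<0$, and a contact discontinuity if $\mathcal{M}(\theta_0)=0$. Because $\rho>0$ on the phase space $\mathcal{P}$, the sign of $\mathcal{M}$ coincides with that of the normal velocity $N$, matching the physical interpretation of the shock's facing direction. Since $\mathcal{M}$ is intrinsic to the $L^\infty$-equivalence class of $U$, no separate pointwise meaning for $\rho$ or $N$ is required.

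To confirm consistency with the jump analysis of Section~3, I would extract convergent subsequences $U(\theta_n^\pm)\to U_\pm$ along $\theta_n^\pm\to\theta_0$, which exist by compactness of $\mathcal{P}$. If $\mathcal{M}(\theta_0)\ne 0$, the continuity of the four normal fluxes forces every such limit pair to satisfy both $[L]=0$ and the Hugoniot relation $H(\tau_-,p_-)=0$, with the sign of $\mathcal{M}(\theta_0)$ canonically distinguishing the front and back sides via the orientation of $N$. If $\mathcal{M}(\theta_0)=0$, then $\rho>0$ forces $N_\pm=0$ on all limit states and \eqref{pjump} gives $[p]=0$, so only $\rho$, $L$, and $S$ may jump---a contact discontinuity. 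The main obstacle, namely the absence of pointwise values for $\rho$ and $N$ separately in the $L^\infty$ setting, is sidestepped precisely by routing the entire definition through the single continuous scalar $\mathcal{M}$, whose value at $\theta_0$ is unambiguous.
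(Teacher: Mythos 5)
Your proposal is correct, but it is organized differently from the paper's proof. The paper argues at the level of limit states: it extracts two sequences with distinct limits $U_0\neq U_0'$, notes that the jump conditions force $\rho_0N_0=\rho_0'N_0'$, and then runs a three-case subsequence/contradiction analysis to show that the sign of $N$ over all limit points is unambiguous (and, in the shock cases, that $N$ has a fixed sign on a whole neighborhood of $\theta_0$; in the contact case, that $N$ is continuous at $\theta_0$ with value $0$). You instead go back one step further and use the fact, already established in Section 2 for the fixed version of $U$, that \eqref{fluxjump} holds for \emph{all} $\theta_1,\theta_2$, so the mass component $\rho N$ of the Lipschitz combination $\sin\theta\,f^x(U)-\cos\theta\,f^y(U)$ is a genuinely continuous scalar $\mathcal{M}$, and you classify the discontinuity by $\operatorname{sgn}\mathcal{M}(\theta_0)$, which equals $\operatorname{sgn}N$ since $\rho\geq C^{-1}>0$. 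Both arguments ultimately rest on the same conservation fact ($\rho N$ cannot jump), but your route buys the auxiliary conclusions essentially for free: continuity of $\mathcal{M}$ immediately gives the neighborhood on which $N>0$ (resp.\ $N<0$) used later in Theorem \ref{newthm2}, and in the case $\mathcal{M}(\theta_0)=0$ the bound $|N|\leq \rho_{\min}^{-1}|\mathcal{M}|$ gives the continuity of $N$ at the contact used in Theorem \ref{newthm3}, replacing the paper's diagonal subsequence arguments. The one point worth making explicit is that your continuity claim is tied to the particular representative of $U$ chosen via the lemma from \cite{ER} (so that \eqref{fluxjump} is a pointwise, not merely a.e., statement); with that said, your consistency check via \eqref{pjump} and the Hugoniot relation matches the computations of Section 3 and the argument is complete.
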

\begin{proof}
Suppose $U(\theta)$ is discontinuous at $\theta_0$.  Then we can pick sequences $\left\{ \theta_n \right\}, \left\{ \theta'_n \right\} \rightarrow \theta_0$ with $U(\theta_n) \rightarrow U_0$, $U(\theta'_n) \rightarrow U'_0$, with $U_0 \neq U'_0$.  Then, from above, necessarily $\rho_0 N_0 = \rho'_0 N'_0$.  Based on our assumptions about $\rho$, it follows that $N_0$ and $N'_0$ are either both positive, both negative, or both zero.  \\
\vspace{2pc}
\\
Case 1: $N_0 = N'_0 = 0.$  Let $\left\{\theta''_n \right\}$ be any other sequence converging to $\theta_0$, and take any subsequence $\theta''_{n(k)}$.  Since $U$ is $L^{\infty}$, there exists a subsequence $\left\{ \theta''_{n(k(j))} \right\}$ such that $U(\theta''_{n(k(j))}) \rightarrow U''_0$.  Applying the jump conditions to $U_0$ and $U''_0$ shows that $N''_0 = 0$, because $N_0 = 0$.  Therefore, since any subsequence has a subsequence converging to zero, $N(\theta''_n) \rightarrow 0$, and as $\left\{ \theta''_n \right\}$ was arbitrary, we see that in fact $N(\theta)$ is continuous at $\theta_0$ and $N(\theta_0) = 0$.  In this case we say that $U$ has a contact discontinuity at $\theta_0$. \\
\vspace{2pc}
\\
Case 2: $N_0, N_0' >0$.  We claim that there exists some neighborhood containing $\theta_0$ on which $N(\theta)$ is positive.  Suppose not.  Then there exists $\left\{ \theta''_n \right\} \rightarrow \theta_0$ such that $N(\theta''_n) \leq 0$ for all $n$.  Again, there exists a subsequence $\left\{\theta''_{n(k)} \right\}$ such that $U(\theta''_{n(k)}) \rightarrow U''_0$, and by assumption $N''_0 \leq 0$.  However, applying the jump conditions to $N_0$ and $N''_0$ gives a contradiction.  Therefore, $N(\theta) > 0$ on some neighborhood of $\theta_0$, and we call the shock forward facing.  (The interpretation is that gas particles enter the front side of the shock, and leave the back side.  Since the flow of mass through the shock is aligned with our choice of normal vector for the shock, we call this case forward facing.)\\
\vspace{2pc}
\\
Case 3, for which $N_0, N_1 < 0$, is similar.  We call this case backward facing.
\end{proof}

\section{Entropy}
The entropy admissibility criterion, with $\theta_n^- < \theta_n^+$, takes the form
\begin{alignat}{1}
\Big(- \cos \theta \psi^y(U(\theta))+ \sin \theta \psi^x(U(\theta))\Big) \Big|_{\theta_n^-}^{\theta_n^+} + \bO(|\theta_n^+-\theta_n^-|) \leq 0.
\end{alignat}
(Note this is reversed from the usual conservation laws literature since we are using a concave (physical), not convex (mathematical) entropy.)  In the limit $\theta_n^{\pm} \rightarrow \theta$, if $U(\theta_n^\pm) \rightarrow U_\pm$, this becomes
\begin{alignat}{1}
\sin \theta [ \psi^x(U)] - \cos \theta [\psi^y(U)] \leq 0
\end{alignat}
For the case of the Euler equations, this reads
\begin{alignat}{1}
0 \geq \sin \theta [\rho u S] - \cos \theta [\rho v S] =  [\rho N S],
\end{alignat}
which becomes
\begin{alignat}{1}
 \mathcal{M}[S] \leq 0.
\end{alignat}
(where $\mathcal{M} = \rho_- N_- = \rho_+ N_+$).  In the case of a forward facing shock, $S_+ \leq S_-$, and in the case of a backward facing shock, $S_+ \geq S_-$.  In either case, the entropy cannot decrease when the gas passes through the shock from the front to the back.

We now assume the case of a forward facing shock, and fix a pair of left and right sequences $\theta_n^\pm$ as usual, so that $\theta_n^- < \theta_n^+$ for all $n$, and $U(\theta_n^\pm) \rightarrow U_\pm$.  We know that $U_-$ must satisfy $H(\tau_-, p_-) = 0$ for the Hugoniot function for state $(\tau_+, p_+)$.

\begin{lemma}(\cite{courant-friedrichs})\mylabel{supersubsonic}
Assume that for fixed $(\tau_+, p_+)$, the set of states $(\tau,p)$ for which $H(\tau,p)=0$ is a smooth curve in the $(\tau,p)$ plane that can be described by $p=G(\tau)$.  Then, for the case of an entropy admissible forward facing shock, which requires $S_- > S_+$, we have
\begin{enumerate}
\item $\tau_- < \tau_+$ (that is, shocks are compressive),
\item $N_+ > c_+ >0, 0<N_- < c_-.$ 
\end{enumerate}
We have the analogous statement for an entropy admissible backward facing shock, which instead requires $S_- < S_+$:
\begin{enumerate}
\item $\tau_- > \tau_+$,
\item $-N_- > c_- > 0, 0<-N_+ < c_+.$ 
\end{enumerate}
Moreover, for any shock,
\begin{alignat}{1}
|N_\pm| \geq \frac{c_{\textrm{min}}\rho_{\textrm{min}}}{\rho_{\textrm{max}}}.
\end{alignat}
\end{lemma}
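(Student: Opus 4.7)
The plan is to reduce the three claims to a careful analysis of the Hugoniot curve $p=G(\tau)$ near the base point $(\tau_+,p_+)$, combined with the Rayleigh-line identity $[p]/[\tau]=-\mathcal{M}^2=-\rho_\pm^2 N_\pm^2$ from \eqref{msquared}. I will treat the forward-facing case; the backward case is completely analogous (the algebra is the same after swapping the roles of $+$ and $-$, with the sign of $\mathcal{M}$ reversed), and the uniform bound on $|N_\pm|$ is a direct corollary of the sonic conditions together with the a priori bounds on $\rho$ and $c$ from the phase space $\mathcal{P}$.

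For compressivity, parametrize the Hugoniot by $\tau$ and view $S$ as a smooth function of $\tau$ along the curve. Differentiating $H(\tau,G(\tau))=0$ and using the thermodynamic identity $de=T\,dS-p\,d\tau$ yields the compact expression $2T\,S'(\tau)=(p-p_+)-(\tau-\tau_+)p'(\tau)$, which makes it immediate that $S'(\tau_+)=0$. Differentiating twice more and using $p(\tau)=g(\tau,S(\tau))$ together with the already-established vanishing of $S'$ and $S''$ at $\tau_+$ gives $2T_+\,S'''(\tau_+)=-g_{\tau\tau}(\tau_+,S_+)<0$. So along the Hugoniot near $\tau_+$, $S(\tau)-S_+$ is a negative multiple of $(\tau-\tau_+)^3$ to leading order, and the entropy condition $S_->S_+$ selects $\tau_-<\tau_+$.

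For the sonic conditions, the tangent to the Hugoniot at the base point coincides with the tangent to the isentrope through that point, so $G'(\tau_+)=g_\tau(\tau_+,S_+)=-\rho_+^2 c_+^2$; moreover the calculation of the previous paragraph also yields $G''(\tau_+)=g_{\tau\tau}(\tau_+,S_+)>0$. A Taylor expansion then gives the secant slope $[p]/[\tau]=G'(\tau_+)+\tfrac12 G''(\tau_+)(\tau_--\tau_+)+\ldots$, and since $\tau_--\tau_+<0$ I obtain $-\mathcal{M}^2<-\rho_+^2 c_+^2$, equivalently $N_+>c_+$. Refixing the base point at $(\tau_-,p_-)$ (the Hugoniot relation is symmetric in its two arguments) and applying the same estimate in reverse gives $-\mathcal{M}^2>-\rho_-^2 c_-^2$, so $0<N_-<c_-$. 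For the uniform bound, $|N_+|\geq c_+\geq c_{\min}$ combined with $\rho_+ N_+=\rho_- N_-$ yields $|N_-|=\rho_+|N_+|/\rho_-\geq \rho_{\min}c_{\min}/\rho_{\max}$, with the lower bound for $|N_+|$ reading off from the same identity.

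The main obstacle is that the Taylor argument of the third paragraph is intrinsically local: it controls the chord slope only when $(\tau_-,p_-)$ lies close to $(\tau_+,p_+)$. To promote it to the full admissible range of post-shock states one needs convexity of $G$ on the relevant branch of $\{H=0\}$, which is not built into the general thermodynamic hypotheses but does follow, via the classical Courant--Friedrichs analysis, from $g_{\tau\tau}>0$ together with the structural hypothesis that the Hugoniot locus is a single smooth graph $p=G(\tau)$; for a polytropic gas convexity is explicit and elementary, which is consistent with the polytropic restriction used later in the paper. This is presumably why the lemma is imported from \cite{courant-friedrichs} rather than reproved in full generality here.
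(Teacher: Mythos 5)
Your local computations are correct and coincide with the first half of the paper's argument ($S'(\tau_+)=S''(\tau_+)=0$, $S'''(\tau_+)<0$, $G'(\tau_+)=g_\tau=-\rho_+^2c_+^2$, $G''(\tau_+)=g_{\tau\tau}>0$, and the reduction of the Lax conditions to comparing the chord slope $[p]/[\tau]=-\mathcal{M}^2$ with $-\rho_\pm^2c_\pm^2$; the final bound $|N_\pm|\geq c_{\min}\rho_{\min}/\rho_{\max}$ is also handled as in the paper). But there is a genuine gap: both your compressivity argument and your sonic argument are purely local. The cubic behavior $S-S_+\approx S'''(\tau_+)(\tau-\tau_+)^3/6$ only identifies the sign of $[\tau]$ from $S_->S_+$ for weak shocks, and the Taylor expansion of the secant slope only controls $[p]/[\tau]$ when $\tau_-$ is near $\tau_+$; yet the lemma is stated for shocks of arbitrary strength within the phase space. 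You flag the locality only for the sonic conditions, not for compressivity, and your proposed repair --- global convexity of $G$ along the Hugoniot branch --- is asserted rather than proved, and deferring to ``the classical Courant--Friedrichs analysis'' or to the polytropic case is not a proof (the lemma is used with only the structural hypotheses $g_\tau<0$, $g_{\tau\tau}>0$, $g_S>0$, before the paper specializes to a polytropic gas).

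The missing idea, which is exactly how the paper closes the gap and which avoids any convexity of $G$, is a global chord argument. Parametrize the straight segment $\tau=\tau_++(\tau_--\tau_+)s$, $p=p_++(p_--p_+)s$ and observe $H'(s)=T\,S'(s)$; since $H(0)=H(1)=0$ there is an interior critical point of $S$, and at any critical point one computes
\begin{alignat}{1}
S''(s_0) = \frac{(p_--p_+)^2}{S_\tau^2}\left(S_{\tau\tau}S_p^2-2S_{\tau p}S_\tau S_p+S_{pp}S_\tau^2\right)<0,
\end{alignat}
the quadratic-form inequality being a consequence of $g_{\tau\tau}>0$. Hence $S$ has a unique interior maximum along the chord, so $S'(0)>0$ and $S'(1)<0$. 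Writing $S'(s)=S_\tau(\tau_--\tau_+)+S_p(p_--p_+)$ at the endpoints and using $g_\tau=-S_\tau/S_p=-\rho^2c^2$, $S_p>0$, these two sign conditions give $(p_--p_+)-(\rho_+c_+)^2(\tau_--\tau_+)>0$ and $(p_--p_+)-(\rho_-c_-)^2(\tau_--\tau_+)<0$, i.e.\ $(\rho_+c_+)^2<\mathcal{M}^2<(\rho_-c_-)^2$ for shocks of any admissible strength --- no expansion in $\tau_--\tau_+$ and no convexity of $G$ needed. Moreover $H'(1)<0$ shows the chord is never tangent to the Hugoniot at the far state, which forces $S'(\tau)\neq 0$ along the Hugoniot for all $\tau\neq\tau_+$; combined with your local cubic information this upgrades to global strict monotonicity of $S$ in $\tau$ along the Hugoniot, and only then does $S_->S_+\iff\tau_-<\tau_+$ hold beyond the weak-shock regime. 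Without this global step your proof establishes the lemma only for sufficiently small jumps.
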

\begin{proof}
This proof can be found in \cite{courant-friedrichs}.  First we derive some properties from the equation of state $p = g(\tau, S)$.  Recall
\begin{alignat}{1}
g_{\tau} = -c^2 \rho^2 &< 0, \myeqlabel{gtau} \\
g_{\tau \tau} &> 0, \myeqlabel{gtautau}\\
g_S &> 0. \myeqlabel{gS}
\end{alignat}
Differentiating $S=S(\tau,g(\tau,S))$ with respect to $S$ we obtain
\begin{alignat}{1}
1 = S_p g_S \implies S_p > 0, \myeqlabel{Sp}
\end{alignat}
from \eqref{gS}.  Differentiating $S=S(\tau,g(\tau,S))$ with respect to $\tau$ yields
\begin{alignat}{1}
0 = S_\tau + S_p g_\tau \implies S_\tau > 0, g_\tau = -\frac{S_\tau}{S_p},
\end{alignat}
from \eqref{Sp} and \eqref{gtau}.  Differentiating again with respect to $\tau$ yields
\begin{alignat}{1}
0 = S_{\tau \tau} + 2S_{\tau p}g_\tau + S_p g_{\tau \tau} + S_{pp}g_\tau^2.
\end{alignat}
From \eqref{gtautau}, it follows that
\begin{alignat}{1}
S_{\tau \tau} + 2 S_{\tau p}g_\tau + S_{pp} g_{\tau} < 0.
\end{alignat}
Substituting in $\displaystyle g_\tau = -\frac{S_\tau}{S_p}$ and multiplying by $S_p^2$ yields
\begin{alignat}{1}
S_{\tau \tau}S_p^2 - 2S_{\tau p}S_\tau S_p + S_{pp}S_\tau^2 < 0.
\end{alignat}
We now claim that $G''(\tau_+) > 0$.  We start differentiating the Hugoniot function with respect to $\tau$ along the curve $p=G(\tau)$.  
\begin{alignat}{1}
e(\tau,S(\tau)) - e(\tau_+,S(\tau_+)) + \frac{1}{2}(\tau - \tau_+)(G(\tau)+G(\tau_+)) &= 0 \\
\implies e_S S'(\tau)+e_{\tau} + \frac{1}{2} (G(\tau)+G(\tau_+)) + \frac{1}{2} (\tau-\tau_+)G'(\tau) &=0. \myeqlabel{Htau}
\end{alignat}
Substituting in $e_S = T, e_\tau = -p$ yields
\begin{alignat}{1}
T S'(\tau) + \frac{1}{2} (G(\tau_+)-G(\tau)) + \frac{1}{2}(\tau - \tau_+)G'(\tau) = 0.
\end{alignat}
Therefore, $S'(\tau_+) = 0$.  Furthermore,
\begin{alignat}{1}
T'(\tau)S'(\tau)+TS''(\tau)-\frac{1}{2}G'(\tau)+\frac{1}{2}G'(\tau) + \frac{1}{2} (\tau-\tau_+)G''(\tau) = 0, \myeqlabel{Stautau}
\end{alignat}
implies $S''(\tau_+) = 0.$  The relation $G(\tau) = g(\tau,S(\tau))$ yields
\begin{alignat}{1}
G'(\tau) &= g_\tau + g_S S'(\tau), \\
G''(\tau) &= g_{\tau \tau} + 2 g_{\tau S} S'(\tau) + g_{SS}S'(\tau)^2 + g_S S''(\tau),
\end{alignat}
and so $G''(\tau_+) = g_{\tau \tau}(\tau_+, S(\tau_+)) > 0$ from our assumptions about the equation of state.\\
Now, consider any other state $(\tau_-,p_-), p_- = G(\tau_-)$, such that $H(\tau_-,p_-) = 0$.  Parametrize the straight line segment between these two states as
\begin{alignat}{1}
\tau = \tau_+ + (\tau_- - \tau_+)s,  \qquad p = p_+ + (p_- - p_+)s.
\end{alignat}
($H(\tau)$ and $S(\tau)$ refer to states on the Hugoniot curve, while $H(s)$ and $S(s)$ refer to states on this straight line segment.)  We have that
\begin{alignat}{1}
H'(s)=TS'(s) + \frac{1}{2}\big((p_+-p_-)(\tau_--\tau_+)-(\tau_--\tau_+)(p_--p_+)\big) = TS'(s).
\end{alignat}
Hence if $H'(s)=0$, $S'(s) = 0$.  $H(0)=H(1)=0$, and so $H'(s_0) = 0$ for at least one $s_0$ between 0 and 1.  Therefore, $S'(s_0) = 0$ for at least one $s_0$ between 0 and 1.  However, consider $S''(s)$.
\begin{alignat}{1}
S'(s) &= S_\tau \tau'(s) + S_p p'(s) => \frac{S_\tau}{S_p} \Big|_{s=s_0} = -\frac{p_--p_+}{\tau_--\tau_+}. \\
S''(s) &= S_{\tau \tau} \tau'(s)^2 + 2S_{\tau p} \tau'(s) p'(s) + S_{pp} p'(s)^2 \\
&= (p_--p_+)^2 \left( S_{\tau \tau} \frac{(\tau_--\tau_+)^2}{(p_--p_+)^2} + 2 S_{\tau p} \frac{\tau_--\tau_+}{p_--p_+} + S_{pp} \right), \\
S''(s_0) &= \frac{(p_--p_+)^2}{S_\tau^2} \left(S_{\tau \tau}S_p^2 - 2S_{\tau p}S_\tau S_p + S_{pp}S_\tau^2 \right) < 0.
\end{alignat}
Therefore, $S(s)$ has a single maximum at $s_0$ and no other critical points.  Therefore, $S'(0)>0$, $S'(1)< 0$.  Therefore, $H'(1)<0$ and this straight line cannot be tangent to the Hugoniot curve at $s=1$, or equivalently $\tau = \tau_-$. \\
Now, consider the Hugoniot curve.  We have that
\begin{alignat}{1}
H'(\tau_-)= TS'(\tau_-)+\frac{1}{2} \big( (p_+-p_-) + (\tau_- - \tau_+)G'(\tau_-)\big) = 0.
\end{alignat}
Suppose $S'(\tau_-)$ = 0, then 
\begin{alignat}{1}
G'(\tau_-) = \frac{p_--p_+}{\tau_- - \tau_+}.
\end{alignat}
That is, the Hugoniot curve is tangent to the straight line between $(\tau_+,p_+)$ and $(\tau_-,p_-)$.  This contradicts what we showed above, and so 
\begin{alignat}{1}
S'(\tau) \neq 0 \mbox{ for all } \tau \neq \tau_+.
\end{alignat}
Differentiating \eqref{Stautau} we obtain
\begin{alignat}{1}
T''(\tau)S'(\tau)+2T'(\tau)S''(\tau)+T S'''(\tau) +\frac{1}{2}((\tau-\tau_+)G'''(\tau)+G''(\tau)) = 0,
\end{alignat}
and at $\tau_+$ this yields
\begin{alignat}{1}
S'''(\tau_+)<0.
\end{alignat}
Thus for $(\tau_- - \tau_+)$ sufficiently small,
\begin{alignat}{1}
S_- &> S_+ \iff \tau_- < \tau_+\\
S_- &< S_+ \iff \tau_- > \tau_+
\end{alignat}
which allows us to conclude $S'(\tau_+) = 0$, $S'(\tau) < 0$ for all $\tau \neq \tau_+$.  In conclusion, for a forward facing shock so that $\tau_+$ is a limit from the front side, $S_- \geq S_+$ if and only if $\tau_- \leq \tau_+$, and we can further conclude that $S_- > S_+$ if there is an actual discontinuity.\\
Finally, since $S_s = S_p(p_--p_+) + S_\tau (\tau_--\tau_+)$, the conditions $S(0)>0, S(1)<0$ when combined with $\displaystyle g_\tau = -\frac{S_\tau}{S_p} = -\rho^2 c^2$ and $S_p > 0$ yield
\begin{alignat}{1}
(p_--p_+) - (\rho_+c_+)^2 (\tau_--\tau_+) &> 0, \\
(p_--p_+) - (\rho_-c_-)^2 (\tau_--\tau_+) &< 0,
\end{alignat}
leading to (taking $\tau_- < \tau_+$)
\begin{alignat}{1}
(\rho_+ c_+)^2 < - \frac{p_+-p_-}{\tau_+-\tau_-} < (\rho_- c_-)^2.
\end{alignat}
\eqref{msquared} yields
\begin{alignat}{1}
(\rho_+ c_+)^2 < m^2 < (\rho_- c_-)^2,
\end{alignat}
from which we obtain Lax-type conditions $N_+ > c_+>0$, $0<N_- < c_-$. \\
The case of a backward-facing shock is similar, and the remaining lower bound on $|N_{\pm}|$ follows from the fact that one must be greater in magnitude than $c_{\textrm{min}}$, and the relation
\begin{alignat}{1}
N_{\pm} = N_{\mp}\frac{\rho_\mp}{\rho_\pm}.
\end{alignat}
\end{proof}

\section{Properties of discontinuities, uniform distances between different types of discontinuities}
We now use the idea of averaged matrices, that is smooth matrix valued functions of $U^\pm$ and $\theta$ that satisfy:
\begin{enumerate}[(a)]
\item $\sin\theta \big(f^x(U^+)-f^x(U^-)\big) - \cos \theta \big(f^y(U^+)-f^y(U^-)\big) $\\
 $ = \hat{A}(U^-,U^+;\theta)(U^+-U^-)$

\item $\hat{A}(U,U;\theta) = A(U;\theta):=  \sin \theta f^x_U(U) - \cos \theta f^y_U(U),$

\item $\hat{A}(U^-,U^+;\theta)$ is diagonalizable with real eigenvalues for all $U^\pm \in \mathcal{P}$, $\theta \in [0, 2\pi[$.

\end{enumerate}

The existence of matrices with the important conservation and diagonalizability properties is guaranteed (see \cite{HL}) if our system possesses an entropy-entropy flux $(\eta,\psi^x,\psi^y)$ with $\eta$ positive definite, but one that is easier for computations in the polytropic setting uses a different line integral in phase space as opposed to one using the entropy gradient as described in \cite{HL}.  It has the property that the averaged matrix is simply the flux matrix evaluated at a suitable averaged state.  This is called the Roe linearization for the system and is very common in numerics.  \\  
Rearranging \eqref{fluxjump}, we obtain
\begin{alignat}{1}
\sin\theta_1 \Big(f^x\big(U(\theta_2)\big)-f^x\big(U(\theta_1)\big)\Big) - \cos \theta_1 \Big(f^y\big(U(\theta_2)\big)-f^y\big(U(\theta_1)\big)\Big) = \bO(|\theta_2-\theta_1|).
\end{alignat}

Using the averaged matrix, we have
\begin{alignat}{1}
\hat{A}\big(U(\theta_2),U(\theta_1);\theta_1 \big) \big(U(\theta_2)-U(\theta_1)\big) = \bO(|\theta_2-\theta_1|). \myeqlabel{Adiff}
\end{alignat}

Now, consider the matrix
\begin{alignat}{1}
A(U;\theta) = \sin \theta f^x_U(U)- \cos \theta f^y_U(U).
\end{alignat}
By direct computation, we have that
\begin{alignat}{1}
\mbox{det}\Big(\sin \theta f^x_U(U)- \cos \theta f^y_U(U)\Big) = 0 \iff |N|=c \mbox{ or } N = 0.
\end{alignat}
We then have the following theorem.
\begin{theorem} \mylabel{newthm1}
Suppose $U$ is continuous on an interval $]\theta_1,\theta_2[$ and that $|N| \neq \pm c$ or $0$ on this interval.  Then $U$ is constant on this interval.
\end{theorem}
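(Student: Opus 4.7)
The plan is to upgrade the distributional conservation law to a pointwise ordinary differential equation of the form $A(U;\theta)\,U'(\theta)=0$. The argument proceeds in two stages: first, prove that continuity of $U$ together with the invertibility of $A$ forces $U$ to be locally Lipschitz on $\boi{\theta_1}{\theta_2}$; second, use that regularity to differentiate classically and exploit the invertibility of $A$ once more to conclude $U'\equiv 0$.

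For the Lipschitz step, I would start from the integrated form
\[
\bigl(\sin\theta f^x(U(\theta))-\cos\theta f^y(U(\theta))\bigr)\Big|_{\theta_1'}^{\theta_2'}=\int_{\theta_1'}^{\theta_2'}\bigl(\cos\eta f^x(U(\eta))+\sin\eta f^y(U(\eta))\bigr)d\eta
\]
valid for any $\theta_1',\theta_2'\in\boi{\theta_1}{\theta_2}$. Replacing $\sin\theta_2',\cos\theta_2'$ by $\sin\theta_1',\cos\theta_1'$ on the left (which costs an error $\bO(|\theta_2'-\theta_1'|)$ since $f^x,f^y$ are bounded on $\Ps$) and invoking the averaged matrix, this rearranges to
\[
\hat A\bigl(U(\theta_1'),U(\theta_2');\theta_1'\bigr)\bigl(U(\theta_2')-U(\theta_1')\bigr)=\bO(|\theta_2'-\theta_1'|).
\]
Since $U$ is continuous and $\hat A(U,U;\theta)=A(U;\theta)$ is invertible throughout $\boi{\theta_1}{\theta_2}$ by the hypothesis $|N|\neq c$ and $N\neq 0$, a continuity/compactness argument on any compact subinterval $I\Subset\boi{\theta_1}{\theta_2}$ shows that $\hat A$ is uniformly invertible whenever $|\theta_2'-\theta_1'|$ is sufficiently small. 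This yields a local Lipschitz bound $|U(\theta_2')-U(\theta_1')|\leq C|\theta_2'-\theta_1'|$, and chaining small steps promotes it to Lipschitz continuity on $I$.

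Once $U$ is Lipschitz, I can differentiate classically almost everywhere. The chain rule gives
\[
\bigl(\sin\theta f^x(U)-\cos\theta f^y(U)\bigr)'=\cos\theta f^x(U)+\sin\theta f^y(U)+A(U;\theta)\,U'(\theta)
\]
almost everywhere, and comparing with the distributional identity $\bigl(\sin\theta f^x(U)-\cos\theta f^y(U)\bigr)_\theta=\cos\theta f^x(U)+\sin\theta f^y(U)$ derived earlier yields $A(U;\theta)\,U'(\theta)=0$ a.e. Invertibility of $A$ on the interval then forces $U'=0$ a.e., so $U$, being absolutely continuous, is constant on $\boi{\theta_1}{\theta_2}$.

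The main technical obstacle is the first stage: extracting local Lipschitz regularity from mere continuity. What is required is not pointwise invertibility of $A$ but uniform invertibility of $\hat A(U^-,U^+;\theta)$ near the diagonal $U^-=U^+$ over compact subintervals, which relies on the smoothness of $\hat A$ in all its arguments together with the hypothesis that $A$ stays bounded away from degeneracy. After that is secured, the classical chain rule does all the remaining work.
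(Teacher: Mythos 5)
Your proposal is correct and follows essentially the same route as the paper: the averaged-matrix identity $\hat A\bigl(U(\theta_1'),U(\theta_2');\theta_1'\bigr)\bigl(U(\theta_2')-U(\theta_1')\bigr)=\bO(|\theta_2'-\theta_1'|)$ combined with uniform invertibility of $\hat A$ near the diagonal (from continuity of $U$ and nondegeneracy of $A$) to get Lipschitz regularity, then classical differentiation a.e.\ and invertibility of $A$ to force $U_\theta=0$. The only cosmetic difference is that the paper argues Lipschitz continuity pointwise by contradiction with a blowing-up difference quotient, whereas you establish the two-point bound directly on compact subintervals; the substance is identical.
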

\begin{proof}
Fix some $\theta \in ]\theta_1,\theta_2[$.
    We claim that $U$ must be 
    Lipschitz at $\theta$.  
    Suppose not.  
    Then we can choose a sequence $\left\{\theta_n \right\} \rightarrow \theta$ (with $|\theta_n-\theta| \neq 0$) such that 
    $$ 0 < \left|\frac{U(\theta_n)-U(\theta)}{\theta_n-\theta}\right| \nearrow\infty. $$
    Divide both sides of \eqref{Adiff} by $|U(\theta_n)-U(\theta)|$ to obtain
    \begin{alignat}{1}&
\hat{A}\big(U(\theta),U(\theta_n);\theta \big)\left(\frac{U(\theta_n)-U(\theta)}{|U(\theta_n)-U(\theta)|}\right)        \notag\\&= 
\frac{1}{|U(\theta_n)-U(\theta)|} \bO(|\theta_n-\theta|)        = o(1)\quad\text{as $n\rightarrow\infty$.}
    \end{alignat}
    By assumption, $\sin\theta f^x_U(\theta) - \cos\theta f^y_U(\theta)$ is regular, 
    so for $|\theta_n-\theta|$ sufficiently small, $\hat A\big(U(\theta),U(\theta_n);\theta\big)$ will be uniformly regular, because the eigenvalues of $\hat{A}$ are continuous functions of $U^\pm$  (see for example \cite{serrematrix}), and $U$ is continuous at $\theta$ by assumption.  That is,
    \[ \exists \delta>0~\forall z\in\R^m:\Big|\Big(\hat A\big(U(\theta),U(\theta_n);\theta_1\big)\Big) z\Big| \geq \delta|z| \]
    Taking $n \rightarrow \infty$, the left hand side stays bounded away from zero, 
    while the right hand side goes to zero, 
    leading to a contradiction.  

    Therefore, $U$ must be Lipschitz on $]\theta_1,\theta_2[$.
    Assuming $\theta$ is a point of differentiability of $U$, we obtain
    $$ \Big(\sin \theta f^x_U\big(U(\theta)\big)-\cos \theta f^y_U\big(U(\theta)\big)\Big) U_\theta = 0.$$
    However, as we assumed the matrix was regular on $]\theta_1,\theta_2[$, it follows that $U_\theta=0$ on this interval.
    A Lipschitz function is the integral of its derivative, so $U$ is constant on $]\theta_1,\theta_2[$.
\end{proof}
\begin{theorem} \mylabel{newthm2}
Suppose there is a shock at $\theta_0$.  Then there exist $\sigma^+(\theta_0) > \theta_0$ and $\sigma^-(\theta_0) < \theta_0$ so that $U$ is constant on $]\sigma^-(\theta_0),\theta_0[, ]\theta_0,\sigma^+(\theta_0)[$.  In particular U has well defined left and right limits at shocks.
\end{theorem}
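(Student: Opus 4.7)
The plan is to reduce the statement to Theorem \myref{newthm1}, which requires, on each side of $\theta_0$, a one-sided interval on which $U$ is continuous and $|N|$ is bounded away from both $c$ and $0$. The strict sonic gaps furnished by Lemma \myref{supersubsonic} are what make this work.

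Assume the shock at $\theta_0$ is forward facing (the backward case is symmetric), so that Lemma \myref{forwback} supplies a neighborhood of $\theta_0$ on which $N>0$. I first show the one-sided limits $U_\pm=\lim_{\theta\to\theta_0^\pm}U(\theta)$ exist. For any left subsequential limit $U_\ell$ and right subsequential limit $U_r$, the entropy admissibility (applied with ordering $\theta_n^-<\theta_n^+$) together with Lemma \myref{supersubsonic} forces $U_\ell$ subsonic with $0<N_\ell<c_\ell$ and $U_r$ supersonic with $N_r>c_r>0$, sharing the common mass flux $M=\rho_\ell N_\ell>0$. To upgrade these subsequential limits to genuine limits, I consider two right subsequential limits $U_r^{(1)},U_r^{(2)}$: both are supersonic by the above, the Rankine--Hugoniot relations hold between them at $\theta_0$ (since they hold for any pair of subsequential limits there, irrespective of side), and these relations confine $U_r^{(2)}$ to the intersection of the line $p=p_r^{(1)}-M^2(\tau-\tau_r^{(1)})$ with the Hugoniot curve through $U_r^{(1)}$, which contains at most two points, the non-trivial one of which is subsonic by Lemma \myref{supersubsonic}. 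Supersonicity of $U_r^{(2)}$ then forces $U_r^{(2)}=U_r^{(1)}$. The analogous argument on the left produces $U_-$.

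With $U_\pm$ in hand, the strict inequality $N_+>c_+>0$ and the continuity of $N$ and $c$ as functions of $U$ furnish $\delta>0$ such that $N(\theta)>c(\theta)>0$ throughout $]\theta_0,\theta_0+\delta[$. I claim $U$ is continuous on this interval. Any discontinuity $\theta_1$ in it would, by Lemma \myref{forwback}, be a shock or a contact; but every subsequential limit of $U$ at $\theta_1$ lies in the $U_+$-neighborhood (because $U(\theta)\to U_+$ pointwise as $\theta\downarrow\theta_0$), and hence is supersonic with $N>0$. This excludes contacts (which require $N=0$) and shocks (which by Lemma \myref{supersubsonic} require one subsonic side). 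Theorem \myref{newthm1} now yields $U\equiv U_+$ on $]\theta_0,\theta_0+\delta[$; set $\sigma^+(\theta_0):=\theta_0+\delta$ and proceed symmetrically on the left to obtain $\sigma^-(\theta_0)$.

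The main obstacle is the uniqueness of the one-sided limits. The Rankine--Hugoniot relations alone leave a second Hugoniot partner to any fixed state, so only the combination of the entropy inequality forcing every right-limit to lie on the supersonic side and the sonic classification from Lemma \myref{supersubsonic} collapses this ambiguity. Once the limits $U_\pm$ exist, the remaining argument is a continuity-and-openness exercise combined with a single appeal to Theorem \myref{newthm1}.
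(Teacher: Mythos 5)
Your second half is sound and essentially the paper's argument: once the one-sided limits $U_\pm$ exist, pass to a one-sided neighborhood on which $N-c$ keeps a definite sign, rule out contacts (they need $N=0$) and further shocks (they need a subsonic side by Lemma \ref{supersubsonic}), and invoke Theorem \ref{newthm1}. The gap is in your uniqueness step for the one-sided limits. You deduce that \emph{every} right subsequential limit is supersonic by pairing it with a left subsequential limit and citing Lemma \ref{supersubsonic}; but that lemma only constrains a pair of \emph{distinct} states, and nothing in your argument excludes the configuration in which the left-hand limit exists and equals a state $P$ that is also a right subsequential limit, while $U$ oscillates on the right between $P$ and a second state $Q$. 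In that configuration the only nontrivial left--right pairing is $(P_{\mathrm{left}},Q_{\mathrm{right}})$, which makes $P$ subsonic and $Q$ supersonic; your two right subsequential limits are then one subsonic and one supersonic, which is exactly what the Rayleigh-line/Hugoniot-curve intersection count allows (the nontrivial partner of a supersonic state is subsonic), so the contradiction "two supersonic states cannot be Hugoniot-connected" never materializes. This is precisely the scenario the theorem must eliminate, since it is the way a right limit could fail to exist, so the step "both are supersonic by the above" is a genuine hole.

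Two repairs are available. The paper's route avoids proving existence of $U_+$ first: assuming there is no right neighborhood with $N>c$, it extracts a second right-hand sequence $\theta_n^{++}$ with $N(\theta_n^{++})\le c(\theta_n^{++})$ and ordered after the first ($\theta_n^+<\theta_n^{++}$); the jump relations and the entropy inequality hold for this ordered pair of same-side sequences, and Lemma \ref{supersubsonic} then forces the larger-angle limit to satisfy $N_{++}>c_{++}$, a contradiction, after which continuity on the one-sided intervals is shown and Theorem \ref{newthm1} yields constancy (and hence the limits) for free. Alternatively, you can patch your own route: for two right subsequential limits the entropy inequality can be applied with \emph{both} orderings (interleave the two sequences either way), which with $\mathcal{M}\neq 0$ gives $[S]=0$; since $S$ is strictly monotone along the Hugoniot curve away from the base point (established inside the proof of Lemma \ref{supersubsonic}), Hugoniot-connected states with equal entropy coincide, so the right (and likewise the left) subsequential limit is unique. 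With either fix your proof goes through; without one, the uniqueness of $U_\pm$ is unproved.
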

\begin{proof}
We consider the case of a forward facing shock; backward facing shocks can be treated similarly.  If there is a shock, choose left and right sequences $\theta_n^\pm \rightarrow \theta_0$ with $U(\theta^\pm_n) \rightarrow U_\pm$ with $\theta_n^- < \theta_n^+$ for all n.  We have from above that
\begin{alignat}{1}
N_+ &> c_+ \\
c_- &> N_- > 0.
\end{alignat}
Suppose there is no $\sigma^+(\theta_0)$ such that $N > c$ for all $\theta \in ]\theta_0, \sigma^+(\theta_0)[$.  Then pick a new sequence $\theta_n^{++} \searrow \theta_0$ such that (passing to subseqences if necessary)
\begin{alignat}{1}
N(\theta_n^{++}) &\leq c(\theta_n^{++}) \\
U(\theta_n^{++}) &\rightarrow U_{++} \\
\theta_n^+ &< \theta_n^{++}
\end{alignat}
for all $n$.  If $U_{++} \neq U_+$, then the Lax condition requires $N_{++} > c_{++}$, which contradicts our construction.  If $U_{++} = U_+$, then $N_{++} = N_{+} > c_{+} = c_{++}$, also a contradiction.  Therefore there exists $\sigma^+(\theta_0) > \theta_0$ such that $N > c$ for all $\theta \in ]\theta_0, \sigma^+(\theta_0)[$.  \\

The fact that there is some $\sigma'^-(\theta_0) < \theta_0$ such that $N< c$ for all $\theta \in ]\sigma'^-(\theta_0),\theta_0[$ is analogous.  \\

As observed in the proof of Lemma \myref{forwback}, $N>0$ for some neighborhood of $\theta_0$, otherwise the jump conditions cannot be satisfied.   Taking $\sigma^-(\theta_0)$ to be the left endpoint of this neighborhood intersected with $]\sigma'^-(\theta_0),\theta_0[$ shows that $0 < N < c$ for all $\theta \in ]\sigma^-{\theta_0},\theta_0[$. \\
All that is left to show is that $U$ cannot possess any other discontinuities in these left and right neighborhoods.  Contacts are already ruled out - we know that $N = 0$ at a contact.  Therefore we must show there cannot be shocks.  Suppose there was a shock at $\theta_1 \in ]\theta_0, \sigma^+(\theta_0)[$.  It too must be forward facing, since $N$ is positive for all such $\theta$.  Then, for some $\eta \in ]\theta_0,\sigma^+(\theta_0)[ \cap ]\sigma^-(\theta_1),\theta_1[$ we would have
\begin{alignat}{1}
N(\eta) > c(\eta) < N(\eta),
\end{alignat}
a contradiction.  Therefore, $U$ is continuous on $]\theta_0, \sigma^+(\theta_0)[$ and Theorem \myref{newthm1} shows it is constant.  Similarly we can conclude $U$ is constant on $]\sigma^-(\theta_0),\theta_0[$. 
\end{proof}
Similarly, contact discontinuities are isolated and must also possess constant neighborhoods.  
\begin{theorem} \mylabel{newthm3}
Suppose $U$ has a contact discontinuity at $\theta_0$.  Then, either (a) there exist $\sigma^+(\theta_0) > \theta_0$ and $\sigma^-(\theta_0) < \theta_0$ such that $U$ is constant on $]\sigma^-(\theta_0),\theta_0[, ]\theta_0,\sigma^+(\theta_0)[$, or (b) $\theta_0$ is contained in a closed interval on which $|\vec{u}| = 0$. 
\end{theorem}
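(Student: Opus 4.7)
The plan is to first establish that $N$ is continuous on a full two-sided neighborhood of $\theta_0$, and then to exploit the zero set of $N$ there via Theorem \ref{newthm1}. By Case 1 of Lemma \ref{forwback}, applied along arbitrary sequences, $N(\theta)\to 0$ as $\theta\to\theta_0$. Using the uniform lower bound $|N_\pm|\geq c_{\textrm{min}}\rho_{\textrm{min}}/\rho_{\textrm{max}}$ at shocks (Lemma \ref{supersubsonic}), shocks cannot accumulate at $\theta_0$: given a hypothetical sequence $\theta_k$ of shocks with $\theta_k\to\theta_0$, one picks $\tilde\theta_k$ so close to $\theta_k$ that $|N(\tilde\theta_k)|$ is within half of $|N_k^\pm|$, producing a sequence $\tilde\theta_k\to\theta_0$ along which $|N|$ is uniformly bounded below, contradicting $N\to 0$. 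Thus there is a shock-free neighborhood $]\theta_0-\delta,\theta_0+\delta[$ with $\delta<\pi$, and on it $N$ is continuous (at points of $U$-continuity this is clear, and at every other contact Case 1 of Lemma \ref{forwback} applies). Shrinking $\delta$ further, arrange $|N|<c_{\textrm{min}}\leq c$ throughout.

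Next, set $V:=\{\theta\in\,]\theta_0-\delta,\theta_0+\delta[\,:\,N(\theta)\neq 0\}$. On each connected component $]a,b[$ of $V$ the hypotheses of Theorem \ref{newthm1} hold, so $U$ is constant on $]a,b[$ with some velocity $(u,v)$, giving $N(\theta)=u\sin\theta-v\cos\theta$. If both endpoints lie in $]\theta_0-\delta,\theta_0+\delta[$, then continuity of $N$ there forces
\[ u\sin a - v\cos a = 0 = u\sin b - v\cos b, \]
whose coefficient determinant $\sin(b-a)$ is nonzero since $0<b-a<\delta<\pi$; hence $(u,v)=(0,0)$, so $N\equiv 0$ on $]a,b[$, a contradiction. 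The same argument rules out components with one endpoint at $\theta_0$ and the other in the interior (the contact condition $N(\theta_0)=0$ replacing one linear equation). Consequently, on the right side $]\theta_0,\theta_0+\delta[$, either $V$ contains an entire interval $]\theta_0,\theta_0+\epsilon[$ (on which $U$ is then constant), or the only possible $V$-component is $]a,\theta_0+\delta[$ with $a>\theta_0$, so $N\equiv 0$ on $]\theta_0,a]$. An analogous dichotomy holds on the left.

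In the second alternative on a given side, the weak-form equation $(\rho LN)_\theta=\rho L^2-\rho N^2$ restricted to the interval where $N\equiv 0$ collapses to $\rho L^2=0$, giving $L=0$ almost everywhere and hence $|\vec{u}|=0$ almost everywhere on a one-sided neighborhood of $\theta_0$. If the first alternative holds on both sides, conclusion (a) is immediate. Otherwise, whichever side falls into the second alternative supplies a closed interval containing $\theta_0$ (as an endpoint) on which $|\vec{u}|=0$ in the $L^\infty$ sense, which is conclusion (b).

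The main obstacle I anticipate is Step 2 -- excluding accumulation of shocks at $\theta_0$ by gluing the pointwise lower bound at individual shocks to the limit $N\to 0$ at the contact -- since this is precisely what upgrades $N$ to a continuous function on the whole neighborhood and makes the clean algebraic contradiction available; without it, one cannot legitimately impose $N(a)=N(b)=0$ at the boundary of a $V$-component. A minor subtlety in interpreting (b) is that the fixed version of $U$ may have $\vec{u}(\theta_0)\neq 0$ at the contact itself (since the tangential component $L$ is permitted to jump across a contact), so (b) is best read as a statement about the $L^\infty$ equivalence class, with modification allowed on the null set $\{\theta_0\}$.
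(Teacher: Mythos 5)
Your proposal is correct and follows essentially the same route as the paper's own proof: continuity of $N$ at the contact together with the uniform lower bound $|N_\pm|\geq c_{\textrm{min}}\rho_{\textrm{min}}/\rho_{\textrm{max}}$ at shocks yields a shock-free neighborhood on which $N$ is continuous and $|N|<c$, Theorem \ref{newthm1} makes $U$ constant on the components of $\{N\neq 0\}$, and the two-point linear system $u\sin a-v\cos a=u\sin b-v\cos b=0$ forces the zero set of $N$ to be an interval containing $\theta_0$, giving alternative (a) in the degenerate case and (b) otherwise. The only cosmetic differences are that you exclude shocks by a sequence argument rather than by directly comparing the bound with $|N|$ small, and you extract $L=0$ from $(\rho LN)_\theta=\rho L^2-\rho N^2$ where the paper uses the mass equation $(\rho N)_\theta=\rho L$; your remark that $\vec{u}(\theta_0)$ itself need not vanish is a fair reading of (b).
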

\begin{proof}
From previous analysis we know that in fact $N$ is continuous at $\theta_0$ and that $N(\theta_0) = 0$.  Therefore, choose a $ \pi > \delta > 0$ such that
\begin{alignat}{1}
|\theta - \theta_0| < \delta \implies |N(\theta)| \leq \frac{c_{\textrm{min}} \rho_{\textrm{min}}}{2 \rho_{\textrm{max}}}.
\end{alignat}
By the last part of Lemma \myref{supersubsonic}, we have the lower bound for normal velocity at a shock:
\begin{alignat}{1}
|N_\pm| \geq \frac{c_\textrm{min} \rho_\textrm{min}}{\rho_\textrm{max}}.
\end{alignat}
  Therefore, for $|\theta-\theta_0| < \delta$ there can be no shocks.  If there is another contact discontinuity, $N$ must still be continuous, and so $N$ is continuous for $|\theta - \theta_0| < \delta$.  Therefore, the set
  \begin{alignat}{1}
  C := \left\{ \theta \Big| |\theta - \theta_0| < \delta, N(\theta) = 0 \right\}
  \end{alignat}
  is a closed set in $]\theta_0 - \delta, \theta_0 + \delta[$.  Its complement in $]\theta_0 - \delta, \theta_0 + \delta[$ is therefore open and is thus a countable union of open intervals.  If one of these open intervals $]a_i, b_i[$ has neither endpoint not equal to $\theta_0 \pm \delta$, then $U$ is constant on $(a_i,b_i)$, and so $U \equiv U_i$ on $]a_i, b_i[$, and $N(a_i) = N(b_i)=0$.  However, since $N$ is continuous we can take limits and find
  \begin{alignat}{1}
  N(a_i) &= u_i \sin a_i - v_i \cos a_i  = 0\\
  N(b_i) &= u_i \sin b_i - v_i \cos b_i = 0
  \end{alignat}
  Then, since $a_i \neq b_i$, the vectors $(\sin a_i, -\cos a_i)$ and $(\sin b_i, -\cos b_i)$ span $\R^2$, and so $(u_i, v_i) = 0$.  However this contradicts that $N \neq 0$ on $]a_i, b_i[$.
  Therefore, $C$ is either a closed interval containing $\theta_0$ or simply $\left\{\theta_0\right\}$ .  Consider the conservation of mass equation,
  \begin{alignat}{1}
  (\rho N)_\theta = \rho L,
  \end{alignat}
  in the distributional sense.
  If $N \equiv 0$ on a closed interval, we can take a strong derivative at any $\theta$ in its interior to obtain that $L \equiv 0$ on the interior.  Therefore the supposed contact at $\theta_0$ could not have had a jump in tangential velocity, only density and entropy (continuity of pressure is required by the jump conditions).  (This opens the possibility to very irregular solutions.  If the velocity field is zero on some interval, then a completely arbitrary density distribution can be prescribed, as long as the entropy/internal energy/temperature is also prescribed to result in constant pressure).  If we disregard this possibility, then it follows that this closed interval must be a single point, and so there can only be one contact for $|\theta - \theta_0| < \delta$.  These very irregular solutions are not all that surprising, considering that when the velocity is identically zero the Euler equations become $p_\theta = 0$ in the sense of distributions, so $p \equiv$ constant.  Note that for isentropic flow, this situation cannot occur, since constant pressure can only be attained if density is constant.
\end{proof}

Note we have shown that the set on which $U$ has a discontinuity is countable and discrete.  Therefore, right and left limits are well defined, and so from here on out we modify $U$ to be right continuous at every point.

\begin{lemma} \mylabel{finitecontact}
Assume there are no stagnation points (that is, $|\vec{u}| \neq 0$ everywhere).  Then the set on which $N=0$ is a finite set of points.  Moreover, if $N(\theta_0) = 0$, and there is a shock at $\theta'$,
\begin{alignat}{1}
|\theta'-\theta_0| \geq \delta > 0,
\end{alignat}
with $\delta$ independent of $U$.
\end{lemma}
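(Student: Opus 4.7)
First, I note the key regularity fact that drives both assertions: the mass conservation equation $(\rho N)_\theta = \rho L$ derived in Section 4 holds distributionally, and the right-hand side satisfies $|\rho L| = |\rho u\cos\theta+\rho v\sin\theta| \leq \sqrt{2}\,C$ uniformly in $U\in\mathcal{P}$. Therefore $\rho N$ admits a Lipschitz-continuous representative whose Lipschitz constant $K$ depends only on the constants defining $\mathcal{P}$.

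The plan for the second assertion (uniform shock-to-zero spacing) is straightforward given this Lipschitz bound. At a point $\theta_0$ with $N(\theta_0)=0$ we have $(\rho N)(\theta_0)=0$. At a shock location $\theta'$ the Rankine--Hugoniot condition $[\rho N]=0$ from \eqref{normal-mass-flux} gives $(\rho N)(\theta')=\mathcal{M}$ with the one-sided limits agreeing, and the final estimate in Lemma \ref{supersubsonic} provides the uniform lower bound $|\mathcal{M}|=\rho_\pm|N_\pm|\geq \mu_0:=\rho_{\min}\cdot c_{\min}\rho_{\min}/\rho_{\max}$. Combining,
\begin{alignat}{1}
\mu_0 \leq \big|(\rho N)(\theta')-(\rho N)(\theta_0)\big| \leq K\,|\theta'-\theta_0|,
\end{alignat}
so $|\theta'-\theta_0|\geq \delta := \mu_0/K$, a constant that depends only on the phase space data.

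For the first assertion (finiteness), the plan is to show that any two consecutive zeros $\theta_1<\theta_2$ of $N$ on $S^1$ satisfy $\theta_2-\theta_1\geq\min(\pi,2\delta)$, which immediately bounds the total number of zeros by $2\pi/\min(\pi,2\delta)$. If there is a shock between $\theta_1$ and $\theta_2$, the estimate from the previous paragraph applied to each of them gives $\theta_2-\theta_1\geq 2\delta$. Otherwise, since contacts are themselves zeros of $N$ (Lemma \ref{forwback}) and by assumption there are no zeros of $N$ strictly between $\theta_1$ and $\theta_2$, $U$ has no discontinuities on $(\theta_1,\theta_2)$. On this interval $N\neq 0$, so on each open sub-interval where additionally $|N|\neq c$, Theorem \ref{newthm1} forces $U$ to be locally constant; on each such piece $N(\theta)=u_R\sin\theta-v_R\cos\theta$ is a sinusoid and $c$ is a constant, so $\{|N|=c\}$ is discrete there, and continuity of $U$ across these isolated sonic points forces the local constants to coincide. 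Thus $U\equiv U_R$ throughout $(\theta_1,\theta_2)$, with $(u_R,v_R)\neq 0$ by the no-stagnation hypothesis. Continuity of $\rho N$ yields $N(\theta_1^+)=N(\theta_2^-)=0$; two distinct zeros of the nonzero sinusoid $u_R\sin\theta-v_R\cos\theta$ are spaced by an integer multiple of $\pi$, so $\theta_2-\theta_1\geq\pi$.

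The main obstacle will be the rigorous justification, in the no-shock subcase, that $U$ is actually constant on all of $(\theta_1,\theta_2)$ rather than merely on each component of $\{|N|\neq c\}$. The sinusoidal form of $N$ on each locally-constant piece keeps the sonic set discrete within that piece, so a straightforward connectedness-plus-continuity argument propagates a single constant state across isolated sonic points; if the sonic set displayed accumulation behaviour one would need to use continuity of $U$ to pass to limits of the locally-constant values, which remain consistent by design. Once this structural statement is in hand, the uniform sinusoidal zero-spacing of $\pi$ closes the finiteness argument.
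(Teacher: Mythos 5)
Your treatment of the second assertion (the uniform shock-to-zero distance) is correct: the first component of the Lipschitz flux $\sin\theta f^x(U)-\cos\theta f^y(U)$ is exactly $\rho N$, it is continuous across a shock with value $\mathcal{M}$, and Lemma \ref{supersubsonic} bounds $|\mathcal{M}|$ below, so the Lipschitz bound forces $|\theta'-\theta_0|\geq\mu_0/K$. This is a genuinely different (and arguably cleaner) route than the paper, which instead uses the constant neighborhoods around a point with $N=0$ (via Theorems \ref{newthm3} and \ref{newthm1}) and integrates $N_\theta=L$ to show $N$ needs a uniform angular distance to climb from $0$ to the shock threshold; both arguments rest on the same lower bound for $|N_\pm|$ at shocks.

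The finiteness argument, however, has a genuine gap in the no-shock subcase. From local constancy on the components of $\{|N|\neq c\}$ you conclude that the sonic set inside $(\theta_1,\theta_2)$ is discrete and that the constants propagate across it, hence $U\equiv U_R$ on all of $(\theta_1,\theta_2)$. That is false in general: the sonic set can contain a nondegenerate interval on which $U$ is continuous but non-constant --- precisely a Prandtl--Meyer (expansion or compression) wave, and Theorem \ref{Lpos} explicitly allows a sector whose only wave is a single expansion fan with no shock at all. On such an interval $|N|=c$ everywhere, Theorem \ref{newthm1} does not apply, and the constant states on the two sides of the fan differ, so neither the "discrete sonic set" claim nor the propagation of a single constant survives; your sinusoid argument for spacing $\geq\pi$ then has nothing to act on. (The spacing bound itself can be rescued, but not by constancy: e.g.\ by the turning-angle monotonicity of Section 11, or more simply by your own Lipschitz bound on $\rho N$: if the sonic set meets $(\theta_1,\theta_2)$ then at such a point $|\rho N|\geq\rho_{\textrm{min}}c_{\textrm{min}}$ while $(\rho N)(\theta_1)=0$, giving $\theta_2-\theta_1\geq\rho_{\textrm{min}}c_{\textrm{min}}/K$; otherwise there are no sonic points, no shocks and no contacts, Theorem \ref{newthm1} applies on the whole interval, and the sinusoid argument gives $\theta_2-\theta_1\geq\pi$.) The paper sidesteps this issue from the start by working only on the maximal band where
\begin{alignat}{1}
0<|N(\theta)|<\frac{c_{\textrm{min}}\rho_{\textrm{min}}}{2\rho_{\textrm{max}}},
\end{alignat}
which is strictly below $c$, so neither shocks nor sonic points can occur there; constancy then follows from Theorem \ref{newthm1}, and integrating $N_\theta=L\leq|\vec u|_{\textrm{max}}$ yields the uniform lower bound on the width of that band unless $N$ returns to zero, in which case the constant-state sinusoid forces a separation of at least $\pi$.
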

\begin{proof}
Take any $\theta_0$ with $N(\theta_0) = 0.$  Define
\begin{alignat}{1}
\sigma^+(\theta_0) := \sup_{\eta>\theta_0} \left\{ \eta \, \Big| \, 0 < |N(\theta)| < \frac{c_{\textrm{min}}\rho_{\textrm{min}}}{2\rho_{\textrm{max}}} \quad \forall \theta \in ]\theta_0, \eta[ \right\}.
\end{alignat}
This supremum is defined because it is taken over a nonempty set by Theorem \myref{newthm3}.  Moreover, since there cannot be any shocks on $]\theta_0, \sigma^+(\theta_0)]$, it follows that $|N(\sigma^+(\theta_0))| = 0$ or $\frac{c_{\textrm{min}}\rho_{\textrm{min}}}{2\rho_{\textrm{max}}}.$  We claim that if $\sigma^+(\theta_0) - \theta_0 < \pi$, then $|N(\sigma^+(\theta_0))| = \frac{c_{\textrm{min}}\rho_{\textrm{min}}}{2\rho_{\textrm{max}}}$.  If not, then since there could be no shocks or contacts between $\theta_0$ and $\sigma^+(\theta_0)$, by Theorem \myref{newthm1}, $U$ would be constant.  However, this would lead to a contradiction of $N(\theta_0) = N(\sigma^+(\theta_0)) = 0$ and they were separated by less than $\pi$.  Therefore, $|N(\sigma^+(\theta_0))|= \frac{c_{\textrm{min}}\rho_{\textrm{min}}}{2\rho_{\textrm{max}}}$, and $U$ must be constant on $]\theta_0, \sigma^+(\theta_0)]$ (and continuous at $\sigma^+(\theta_0)$).  When $U$ is constant, $\frac{dN}{d\theta} = L$, and so
\begin{alignat}{1}
\frac{c_{\textrm{min}}\rho_{\textrm{min}}}{2\rho_{\textrm{max}}} &= |N(\sigma^+(\theta_0))-N(\theta_0)| \\
&= \lim_{\eta \searrow \theta_0} \Big| \int_\eta^{\sigma^+(\theta_0)} L(\phi) d\phi \Big| \leq |\vec{u}|_{\textrm{max}}(\sigma^+(\theta_0)-\theta_0).
\end{alignat}
Therefore, $(\sigma^+(\theta_0)-\theta_0)$ is bounded below independent of $U$, and another contact could only happen for $\theta > \sigma^+(\theta_0)$.  A similar argument works for $\theta < \theta_0$, and so the total number of contacts is finite. \\
This same calculation shows that the distance between a contact and a shock is lower bounded independent of $U$.
\end{proof}

\begin{lemma} \mylabel{unifseparation}
If there is a forward facing shock at $\theta$, and a backward facing shock at $\theta'$, then
\begin{alignat}{1}
|\theta - \theta'| \geq \delta > 0
\end{alignat}
for some $\delta$ independent of $U$.
\end{lemma}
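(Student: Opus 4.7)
My plan is to follow the normal velocity $N$ along the arc of $S^1$ joining the two shocks and exploit its forced sign change. Fix one of the two arcs from $\theta$ to $\theta'$; I will bound its length, and by symmetry the same bound holds for the other arc. Theorem \ref{newthm2} combined with Lemma \ref{supersubsonic} yields a constant-$U$ right neighborhood of $\theta$ on which $N \geq c_+ \geq c_{\min}$, and a constant-$U$ left neighborhood of $\theta'$ on which $N \leq -c_- \leq -c_{\min}$. Jumps in $N$ preserve sign at shocks (positive at forward shocks, negative at backward shocks) and vanish on both sides of contacts; combined with right-continuity of $U$, this makes $\{N>0\}$ open on the arc, so its right endpoint is a first point $\phi^*$ with $N(\phi^*)=0$ and $N>0$ on $(\theta, \phi^*)$.

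I would then analyze $N$ immediately to the left of $\phi^*$. Since $|N(\phi^*)|=0<K:=c_{\min}\rho_{\min}/\rho_{\max}$ and Lemma \ref{supersubsonic} gives $|N|\geq K$ at every shock, $\phi^*$ is not a shock; it is either a continuity point of $U$ or a contact discontinuity (at which $N$ equals $0$ on both sides). In either case the left limit of $N$ at $\phi^*$ is $0$, so there exists an interval $(\sigma^-,\phi^*)$ on which $0<N<K/2$. The bound $|N|<K$ rules out shocks on this interval, the condition $N>0$ rules out contacts, so $U$ is continuous there; and since $|N|\in(0,K/2)\subset(0,c_{\min})$ avoids both the zero value and the sonic value $c$, Theorem \ref{newthm1} forces $U$ to be constant on $(\sigma^-,\phi^*)$.

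In this constant region, the weak mass-conservation equation $(\rho N)_\theta=\rho L$ reduces to $dN/d\phi=L$, with $|L|\leq|\vec{u}|$ uniformly bounded by a constant $V_{\max}$ depending only on the phase-space parameters $C$ and $\rho_{\min}$. Choosing $\sigma^-$ to be the infimum of left endpoints of such intervals, we must have $N(\sigma^-)=K/2$ — not $0$, since $\phi^*$ is the first zero of $N$ after $\theta$ — and integrating yields
\begin{equation*}
\tfrac{K}{2} = |N(\phi^*) - N(\sigma^-)| \leq V_{\max}(\phi^*-\sigma^-),
\end{equation*}
hence $\phi^*-\theta\geq K/(2V_{\max})$. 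A mirror-image argument from the right of the sign change produces a last zero $\phi^{**}\geq\phi^*$ of $N$ with $\theta'-\phi^{**}\geq K/(2V_{\max})$, so $\theta'-\theta\geq K/V_{\max}=:\delta$.

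The main obstacle will be the topological bookkeeping around $\phi^*$: one must cleanly use right-continuity of $U$, sign-preservation of $N$ at shocks, and vanishing of $N$ at contacts to identify $\phi^*$ as an actual zero of $N$ rather than a concealed jump, and similarly for $\phi^{**}$. A secondary concern is the possibility of a ``thick'' zero set of $N$ on an interval where $|\vec{u}|\equiv 0$ (Theorem \ref{newthm3}(b)); but such an interval would only increase the separation, since no shock can touch it.
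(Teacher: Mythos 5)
Your argument is correct and is essentially the paper's: the paper likewise observes that $N$ keeps its sign on both sides of a shock and vanishes at contacts, so between oppositely facing shocks $N$ must pass through an actual zero, and then invokes the uniform lower bound from Lemma \ref{finitecontact} on the distance between a zero of $N$ and any shock, which is proved there by exactly your mechanism ($|N|\geq c_{\min}\rho_{\min}/\rho_{\max}$ at shocks, constancy via Theorem \ref{newthm1} on an adjacent interval where $0<|N|<c_{\min}\rho_{\min}/(2\rho_{\max})$, and $N_\theta = L$ with $|L|\leq|\vec{u}|_{\max}$). The only difference is that you re-derive that quantitative bound inline, anchored at the first zero of $N$ past the shock, instead of citing the previous lemma.
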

\begin{proof}
For forward facing shocks, the normal velocity is positive on either side.  Similarly, it is negative on either side of a backward facing shock.  Therefore, between a forward facing and backward facing shock, $N$ must transition through zero, not jump between positive and negative values.  In the previous lemma, we showed that the distance between a point at which $N=0$ and any kind of shock is uniformly bounded away from zero (independent of $U$), and so the claim follows.
\end{proof}

\section{Shock strengths and neighborhood sizes}
We now turn to the case of a polytropic gas, for which
\begin{alignat}{1}
e = \frac{p}{(\gamma-1)\rho}.
\end{alignat}
We assume that $\gamma > 1$, recalling for air $\gamma = 1.4$.  
We now find expressions for various quantities at a shock, following \cite{tran}.  Substituting the expression for $e$ into \eqref{energyjumpa} we obtain
\begin{alignat}{1}
\Big[ \frac{1}{2}\rho N (N^2 + L^2) + \frac{\gamma}{\gamma - 1} N p \Big]=0.
\end{alignat}
However, $[\rho N L^2]=0$ for a shock, and so we have
\begin{alignat}{1}
\Big[ \frac{1}{2} \rho N^3 + \frac{\gamma}{\gamma -1} N p \Big]=0.
\end{alignat}
This can be rewritten as (recalling $\rho_- N_- = \rho_+ N_+$)
\begin{alignat}{1}
0 = \frac{1}{2} (\rho_+ N_+) N_+^2 + \frac{\gamma}{\gamma-1} N_+ p_+ - \frac{1}{2} (\rho_+  N_+) N_-^2 - \frac{\gamma}{\gamma-1} N_- p_-.
\end{alignat}
However, since
\begin{alignat}{1}
N_- = N_+ + \frac{p_+-p_-}{\rho_+ N_+},
\end{alignat}
we obtain
\begin{alignat}{1}
0 &= \frac{1}{2} (\rho_+ N_+) N_+^2 + \frac{\gamma}{\gamma-1} N_+ p_+ \\
&- \frac{1}{2} (\rho_+  N_+) \left(N_+ + \frac{p_+-p_-}{\rho_+ N_+}\right)^2 - \frac{\gamma}{\gamma-1}  p_- \left(N_+ + \frac{p_+-p_-}{\rho_+ N_+}\right).
\end{alignat}
Then,
\begin{alignat}{1}
0 &= \frac{\gamma}{\gamma-1} N_+ (p_+-p_-) - N_+(p_+-p_-) - \frac{1}{2} \frac{(p_+-p_-)^2}{\rho_+ N_+} - \frac{\gamma}{\gamma-1}p_- \frac{p_+-p_-}{\rho_+ N_+} \\
&= \frac{p_+-p_-}{\rho_+ N_+} \left( \frac{1}{\gamma-1} \rho_+ N_+^2 - \frac{1}{2} (p_+-p_-) - \frac{\gamma}{\gamma-1}p_- \right) \\
&= \frac{p_+-p_-}{(\gamma-1)\rho_+ N_+} \left( \rho_+ N_+^2 - \frac{\gamma-1}{2}(p_+-p_-)-\gamma p_-\right) \\
&= \frac{p_+-p_-}{(\gamma-1)\rho_+ N_+} \left( \rho_+ N_+^2 - \frac{\gamma-1}{2} p_+ - \frac{\gamma+1}{2}p_- \right).
\end{alignat}
Therefore,
\begin{alignat}{1}
N_+^2 &= \frac{1}{2 \rho_+} \Big(p_+(\gamma - 1) + p_-(\gamma + 1)\Big) \\
&= \frac{p_+}{\rho_+} \left( \frac{\gamma-1}{2} + \frac{p_-}{p_+} \frac{(\gamma + 1)}{2} \right).
\end{alignat}
Introduce
\begin{alignat}{1}
z = \frac{p_--p_+}{p_+} > 0
\end{alignat}
as the shock strength of a forward facing shock, so that
\begin{alignat}{1}
p_- = p_+(1+z).
\end{alignat}
The remainder of these calculations are for forward facing shocks, and the same statements hold for backward facing shocks but with $\pm$ switched, and all normal velocities are negative.
\begin{alignat}{1}
N_+^2 &=  \frac{p_+}{\rho_+} \left( \frac{\gamma-1}{2} +  \frac{\gamma + 1}{2} (1+z)\right) \\
&= \frac{\gamma p_+}{\rho_+}\left(1 + z\frac{\gamma+1}{2\gamma}\right)\\
&= c^2_+ \left(1 + z\frac{\gamma+1}{2\gamma}\right).
\end{alignat}
Therefore we have the relations (assuming a forward facing shock)
\begin{alignat}{1}
N_+ &= c_+ \sqrt{1+z\frac{\gamma+1}{2\gamma}} \\
N_+ - c_+ &= c_+ \left( \sqrt{1+z\frac{\gamma+1}{2\gamma}} -1 \right).
\end{alignat}
We need similar relations for the state behind the shock.
\begin{alignat}{1}
N_- &= \frac{1}{\rho_+ N_+} \left(\rho_+ N_+^2 - p_- + p_+\right) \\ 
&= \frac{1}{\rho_+ N_+} \left( \rho_+ c_+^2 \left( 1 + z\frac{\gamma+1}{2 \gamma}\right) - p_+(1+z)+p_+\right) \\
&= \frac{c_+^2}{N_+} \left(1 + z \frac{\gamma+1}{2\gamma} - \frac{z p_+}{\rho_+c_+^2} \right) \\
&= \frac{c_+^2}{N_+} \left(1 + z \frac{\gamma+1}{2\gamma} - \frac{z}{\gamma} \right) \\
&= \frac{c_+^2}{N_+} \left(1+z\frac{\gamma-1}{2\gamma}\right).
\end{alignat}
Next, we obtain expressions for $\rho_-$, then $c_-$, $N_- - c_-$, and $\frac{N_-}{c_-}$.
\begin{alignat}{1}
\rho_- &= \frac{\rho_+ N_+}{N_-} = \frac{\rho_+ N_+^2}{c_+^2} \frac{1}{1+z\frac{\gamma-1}{2\gamma}} \\
&= \rho_+ \frac{1+z\frac{\gamma+1}{2\gamma}}{1+z\frac{\gamma-1}{2\gamma}} \myeqlabel{dens-strength}. \\
c_-^2 &= \frac{\gamma p_-}{\rho_-} = \frac{\gamma p_+(1+z)}{\rho_+}\frac{1+z\frac{\gamma-1}{2\gamma}}{1+z\frac{\gamma+1}{2\gamma}} \\
&= c_+^2(1+z)\frac{1+z\frac{\gamma-1}{2\gamma}}{1+z\frac{\gamma+1}{2\gamma}}. \\
N_--c_- &= \frac{c_+^2}{N_+}\left(1+z\frac{\gamma-1}{2\gamma}\right)-c_+ \sqrt{(1+z)\frac{1+z\frac{\gamma-1}{2\gamma}}{1+z\frac{\gamma+1}{2\gamma}}} \\
&= c_+\frac{1+z\frac{\gamma-1}{2\gamma}}{\sqrt{1+z\frac{\gamma+1}{2\gamma}}}-c_+ \sqrt{(1+z)\frac{1+z\frac{\gamma-1}{2\gamma}}{1+z\frac{\gamma+1}{2\gamma}}} \\
&= c_+ \sqrt{\frac{1+z\frac{\gamma-1}{2\gamma}}{1+z\frac{\gamma+1}{2\gamma}}} \left( \sqrt{1+z\frac{\gamma-1}{2\gamma}} - \sqrt{1+z}\right). \\ 
\frac{N_-}{c_-} - 1 &= \frac{\sqrt{1 + z\frac{\gamma-1}{2\gamma}}}{\sqrt{1+z}}-1 \\
\frac{N_-}{c_-} &= \frac{\sqrt{1+z\frac{\gamma-1}{2\gamma}}}{\sqrt{1+z}}
\end{alignat}
We now argue that $z$ is a suitable measure of shock strength.
\begin{lemma}
At a shock, $[|U|]$ and $z$ are equivalent measures of the strength of the shock, i.e. there exists $C>0$ such that
\begin{alignat}{1}
\frac{z}{C} \leq [|U|] \leq Cz
\end{alignat}
\end{lemma}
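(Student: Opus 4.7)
The plan is to split the claim into the separate upper and lower bounds and to rely on the explicit polytropic shock formulas derived earlier in this section, together with the uniform bounds defining the phase space $\mathcal{P}$. Throughout I interpret $[|U|]$ as the norm $|U_+ - U_-|$ of the jump in the conserved variables.

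For the upper bound $[|U|] \leq Cz$, I would compute the jump in each component of $U = (\rho,\rho u,\rho v,E)$ separately. Since $[\rho N]=0$ and $[L]=0$ at a shock, one finds $[\rho u] = L\cos\theta\,[\rho]$ and $[\rho v] = L\sin\theta\,[\rho]$; combining $[\rho N^2] = -[p]$ (from \eqref{pressurejumpa}) with $[\rho L^2] = L^2[\rho]$ gives $[E] = \tfrac{3-\gamma}{2(\gamma-1)}[p] + \tfrac{L^2}{2}[\rho]$. Since $[p] = p_+ z$ and \eqref{dens-strength} writes $[\rho]$ as an explicit rational function of $z$ with denominator $1 + z(\gamma-1)/(2\gamma)$ bounded away from zero (because $z$ itself is bounded above by compactness of $\mathcal{P}$), and since $\rho_+$, $|L|$, $p_+$ are all uniformly bounded on $\mathcal{P}$, each of $|[\rho]|$, $|[\rho u]|$, $|[\rho v]|$, $|[E]|$ is dominated by a constant multiple of $z$ with constants depending only on $C$ and $\gamma$. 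Summing yields the upper bound.

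For the lower bound $[|U|] \geq z/C$, the key observation is that $[p] = p_+ z$ is \emph{exactly} linear in $z$, with $p_+ \geq (\gamma-1)C^{-2} =: p_{\min} > 0$ on $\mathcal{P}$. Writing $p$ as a smooth function of the conserved variables via $p(U) = (\gamma-1)\bigl(E - \tfrac{(\rho u)^2 + (\rho v)^2}{2\rho}\bigr)$, the gradient $\nabla_U p$ is uniformly bounded on the convex enclosure $\{\rho \geq C^{-1},\ \|U\|\leq C\}$ of the phase space, so $p$ is Lipschitz there with some constant $K$ depending only on $C$ and $\gamma$. The chain $p_{\min}\,z \leq |[p]| = |p(U_-)-p(U_+)| \leq K\,|U_- - U_+| = K\,[|U|]$ then gives the lower bound with $C = K/p_{\min}$.

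The step I expect to be the main subtlety is obtaining a uniform Lipschitz constant for $p$ as a function of $U$: the line segment joining $U_-$ and $U_+$ must remain in a region where $\rho$ stays bounded away from zero, which is the reason for passing to the convex enclosure of $\mathcal{P}$ rather than working in $\mathcal{P}$ itself. Once that is in place, no delicate cancellation analysis among the four components of $[U]$ is required, because the pressure jump alone already provides a linear-in-$z$ lower bound with a uniformly nonzero coefficient.
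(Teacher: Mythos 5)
Your proposal is correct; both bounds go through. It is close in spirit to the paper's proof but organized differently enough to be worth comparing. The paper first shows $z/C \leq |[V]| \leq Cz$ for the primitive variables $V=(\rho,u,v,p)$ — using \eqref{dens-strength} with a concavity argument and $z\leq z_{\max}$ for $[\rho]$, the exact relation $|[p]|=p_+z$, and $[p]=-\mathcal{M}[N]$ for $[N]$ — and then transfers to conserved variables: the upper bound via the Lipschitz map $V\mapsto U$ (bounding the Jacobian $U_V$ on a convex enclosure, exactly the same convexity caveat you flag), and the lower bound by simply reading off the first component, $|[U]|\geq|[\rho]|\geq z/C$. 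You instead bound the conserved components directly through the exact identities $[\rho N]=0$, $[L]=0$, $[\rho N^2+p]=0$, which gives the upper bound without any change of variables, and you obtain the lower bound by running the Lipschitz argument in the opposite direction, viewing $p$ as a Lipschitz function of $U$ on the convex enclosure and using $|[p]|=p_+z\geq p_{\min}z$. Your lower bound is slightly cleaner in that it avoids the concavity estimate for $[\rho]$ and does not use $z\leq z_{\max}$ at all, at the cost of needing the Lipschitz bound for $p(U)$ (which the paper only needs for the upper bound); the paper's lower bound is more elementary since $\rho$ is itself a coordinate of $U$. Minor points: with the paper's sign convention $[g]=g_+-g_-$ one has $[p]=-p_+z$, so your identities hold after taking absolute values, and the denominator $1+z(\gamma-1)/(2\gamma)$ is $\geq 1$ for $z\geq 0$, so no appeal to an upper bound on $z$ is actually needed there.
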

\begin{proof}
First, recall that we assume pressure is bounded away from $0$ and $\infty$, so we only need estimates valid for
\begin{alignat}{1}
0 \leq z \leq z_{\textrm{max}} := \frac{p_{\textrm{max}}-p_{\textrm{min}}}{p_{\textrm{min}}}.
\end{alignat}
\eqref{dens-strength} shows that
\begin{alignat}{1}
|[\rho]| &= \left| \rho_+ \frac{1+z\frac{\gamma+1}{2\gamma}}{1+z\frac{\gamma-1}{2\gamma}}  - \rho_+ \right| \\
&= \rho_+ \frac{z}{\gamma + z\frac{\gamma-1}{2}}.
\end{alignat}
The function
\begin{alignat}{1}
\frac{z}{\gamma+z\frac{\gamma-1}{2}}
\end{alignat}
is concave for $z \geq 0$, with derivative at $z=0$ equal to $\frac{1}{\gamma}$.  Therefore
\begin{alignat}{1}
\frac{\rho_{\textrm{min}}}{\gamma+z_{\textrm{max}}\frac{\gamma-1}{2}} z \leq |[\rho]| \leq \rho_{\textrm{max}} \frac{1}{\gamma} z.
\end{alignat}
Clearly
\begin{alignat}{1}
p_{\textrm{min}}z \leq |[p]| \leq p_{\textrm{max}}z.
\end{alignat}
Finally, since $-m[N] = [p]$, and
\begin{alignat}{1}
\rho_{\textrm{min}} c_{\textrm{min}} &\leq |m| \leq \rho_{\textrm{max}}|\vec{u}|_{\textrm{max}}, \\
\frac{p_{\textrm{min}}}{\rho_{\textrm{max}}|\vec{u}|_{\textrm{max}}} z &\leq |[N]| \leq \frac{p_{\textrm{max}}}{\rho_{\textrm{min}}c_{\textrm{min}}}z.
\end{alignat}
We have therefore shown that in terms of the primitive variables $V := (\rho, u, v, p)$ that for a shock
\begin{alignat}{1}
\frac{z}{C} \leq |[V]| \leq Cz
\end{alignat}
for some $C > 0$.  $U = (\rho, \rho u, \rho v, \rho E)$ has derivative
\begin{alignat}{1}
U_V = \left( \begin{array}{cccc} 1 & 0 & 0 & 0 \\ u & \rho & 0 & 0 \\ v & 0 & \rho & 0 \\ \frac{u^2+v^2}{2} & \rho u & \rho v & \frac{1}{\gamma-1} \end{array} \right).
\end{alignat}
Therefore it is $C^1$, and the operator norm of its derivative is bounded uniformly on the phase space under consideration.  We can assume our phase space in $V$ variables is convex (if not we can still obtain a Lipschitz estimate from $\int_0^1 U_V(s(V_+ - V_-)+V_-)ds(V_+-V_-)$ since $U(V)$ have derivatives with operator norm bounded away from $0$ and $\infty$ for $0 <C^{-1} \leq \rho \leq C < \infty $) and therefore this is a Lipschitz map.  Furthermore $|[U]| > |[\rho]| > \frac{z}{C},$ and so putting it all together, there must exist $C > 0$ such that
\begin{alignat}{1}
\frac{z}{C} \leq |[U]| \leq Cz
\end{alignat}
for a shock.  We will thus denote $J(U;\theta):= |[U]|$ as the size of the jump in $U$ at $\theta$.
\end{proof}

We now estimate the sizes of the neighborhoods on either side of a shock on which $U$ must be constant.  
\begin{theorem} \mylabel{uniflax}
Suppose $U$ has a forward facing shock at $\theta_0$.  Then the $\sigma^+(\theta_0) > \theta_0$ and $\sigma^-(\theta_0) < \theta_0$ (from Theorem \myref{newthm2}) such that $U$ is constant on\\ $]\sigma^-(\theta_0),\theta_0[,]\theta_0,\sigma^+(\theta_0)[$ satisfy the following:
\begin{alignat}{1}
\sigma^+(\theta_0) &\geq \theta_0 + \delta_L J(U;\theta_0) \\
\sigma^-(\theta_0) &\leq \theta_0 -  \delta_L J(U;\theta_0),
\end{alignat}
where $\delta_L$ is a positive constant independent of $U$.  
Furthermore,
\begin{alignat}{1}
N_+-c_+ &\geq \delta_L J(U;\theta_0), \\
N_- -c_- &\leq -\delta_L J(U;\theta_0).
\end{alignat}
The analogous statement holds for a backward facing shock.
\end{theorem}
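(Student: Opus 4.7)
The plan is to (i) extract Lax-type lower bounds on $N_+ - c_+$ and $c_- - N_-$ that are linear in the shock strength $z$, (ii) use the shock-strength equivalence proved in the previous lemma to replace $z$ by $J(U;\theta_0)$, and (iii) exploit the fact that $N$ is smooth with a uniformly bounded derivative on the one-sided constant neighborhoods from Theorem \myref{newthm2} to bound their widths from below. The formulas already derived in this section give $N_+ - c_+ = c_+\bigl(\sqrt{1 + z(\gamma+1)/(2\gamma)} - 1\bigr)$ and $N_-/c_- = \sqrt{(1 + z(\gamma-1)/(2\gamma))/(1+z)}$. Writing $\sqrt{1+x}-1 = x/(\sqrt{1+x}+1)$ and $1 - (N_-/c_-)^2 = z(\gamma+1)/(2\gamma(1+z))$, and using the uniform bounds $c_{\textrm{min}} \leq c_\pm \leq c_{\textrm{max}}$ and $0 \leq z \leq z_{\textrm{max}}$, both quantities $N_+ - c_+$ and $c_- - N_-$ are bounded below by a positive constant multiple of $z$; the preceding lemma then converts this into the second pair of inequalities in the theorem.

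For the widths, recall that $U \equiv U_+$ on $]\theta_0, \sigma^+(\theta_0)[$ and $U \equiv U_-$ on $]\sigma^-(\theta_0), \theta_0[$. On the right, $N(\theta) = u_+\sin\theta - v_+\cos\theta$ is smooth with $|N'(\theta)| = |L(\theta)| \leq |\vec{u}|_{\textrm{max}}$, so $N(\theta) > c_+$ for all $\theta \in {]\theta_0,\,\theta_0 + (N_+ - c_+)/|\vec{u}|_{\textrm{max}}[}$. On any such interval the matrix $A(U;\theta)$ is regular, so Theorem \myref{newthm1} forces $U$ to remain equal to $U_+$, giving $\sigma^+(\theta_0) - \theta_0 \geq (N_+ - c_+)/|\vec{u}|_{\textrm{max}}$. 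Combined with step (i), and after shrinking $\delta_L$ by the constant factor $|\vec{u}|_{\textrm{max}}^{-1}$, this yields the desired lower bound on $\sigma^+(\theta_0) - \theta_0$.

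The left neighborhood is treated analogously, but could a priori terminate either at $N = c_-$ or at $N = 0$. The first exit distance is at least $(c_- - N_-)/|\vec{u}|_{\textrm{max}}$, already controlled by step (i). For the second I observe that $N_- = c_+^2(1 + z(\gamma-1)/(2\gamma))/N_+$ admits a uniform positive lower bound $c_{\textrm{min}}^2/|\vec{u}|_{\textrm{max}}$ independent of $z$; since $J$ is uniformly bounded on $\mathcal{P}$, shrinking $\delta_L$ further guarantees $N_- \geq \delta_L J(U;\theta_0)$ as well. The backward-facing case is symmetric. The only step requiring genuine attention is this uniform lower bound on $N_-$, which rules out the left neighborhood being pinched off by $N \searrow 0$; the remainder reduces to direct algebra on the explicit polytropic shock relations already at hand.
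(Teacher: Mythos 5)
Your proposal is correct and takes essentially the same route as the paper's proof: linear-in-$z$ lower bounds on $N_+-c_+$ and $c_--N_-$ from the explicit polytropic shock relations (you via the identity $\sqrt{1+x}-1=x/(\sqrt{1+x}+1)$, the paper via concavity of the same functions), conversion from $z$ to $J(U;\theta_0)$ by the preceding equivalence lemma, and the neighborhood widths from $N_\theta = L$ with $|L|\le |\vec{u}|_{\mathrm{max}}$ on the one-sided constant intervals, with the back-side case where the interval terminates at $N=0$ handled by a uniform positive lower bound on $N_-$ together with $J\le J_{\mathrm{max}}$. The one point you pass over quickly --- that the constancy interval cannot terminate while $N-c$ is still strictly positive, so its endpoint is characterized by $N=c$ (or $N=0$ on the back side) --- is exactly what the paper's proof justifies via the Lax conditions and Theorems \ref{newthm2} and \ref{newthm3}, so there is no substantive difference or gap.
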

\begin{proof}
Suppose the shock is forward facing.  Take 
\begin{alignat}{1}
\sigma^+(\theta_0) := \sup_{\eta > \theta_0} \left\{ \eta \, \Big| \, N(\theta)-c(\theta) > 0  \quad \forall \theta \in ]\theta_0, \eta[ \right\}.
\end{alignat}
The supremum is taken over a non-empty set from Theorem \myref{newthm2}, and is thus defined.  Furthermore, it must be the case that $N(\sigma^+(\theta_0))=c(\theta_0)$ --- from Theorems \myref{newthm2} and \myref{newthm3} there could not be a shock or contact at $\sigma^+(\theta_0)$, and so $U$ is continuous at $\sigma^+(\theta_0)$.  Furthermore, it is constant on $]\theta_0, \sigma^+(\theta_0)[$ by Theorem \myref{newthm1}.  (Note however there could be another forward facing shock at $\theta' > \sigma^+(\theta_0)$ arbitrarily close to $\sigma^+(\theta_0)$).  When $U$ is constant, $\frac{dN}{d\theta} = L$, and so we have
\begin{alignat}{1}
N_+ - c_+ &= N(\theta_0+)-N(\sigma^+(\theta_0)) = \lim_{\eta \searrow \theta_0} \int^\eta_{\sigma^+(\theta_0)} L(\phi)d\phi \\
&\leq (\sigma^+(\theta_0)-\theta_0) |\vec{u}|_{\textrm{max}}.
\end{alignat}
However, recall
\begin{alignat}{1}
N_+ - c_+ = c_+\left(\sqrt{1 + z\frac{\gamma+1}{2\gamma}}-1\right) \geq c_{\textrm{min}} \left(\sqrt{1 + z\frac{\gamma+1}{2\gamma}}-1\right).
\end{alignat}
The function on the right is concave for $z \geq 0$, $0$ at $z=0$, and therefore
\begin{alignat}{1}
N_+ - c_+ \geq c_{\textrm{min}} \frac{\left(\sqrt{1+z_{\textrm{max}}\frac{\gamma+1}{2\gamma}}-1\right)}{z_{\textrm{max}}}z.
\end{alignat}
All together, we obtain
\begin{alignat}{1}
(\sigma^+(\theta_0)-\theta_0) \geq \delta_L' J(U;\theta_0),
\end{alignat}
for some $\delta_L' > 0$.
Similarly, define
\begin{alignat}{1}
\sigma^-(\theta_0) := \inf_{\eta < \theta_0} \left\{ \eta \, \Big| \, 0 < N(\theta) < c(\theta)  \quad \forall \theta \in ]\eta,\theta_0[ \right\}.
\end{alignat}
As before, this infimum is over a non-empty set by Theorem \myref{newthm2}.  Similar to before, it must be the case that either $N(\sigma^-(\theta_0)) = c(\sigma^-(\theta_0))$, or $N(\sigma^-(\theta_0)+) = 0$, since there can be no shocks on $[\sigma^-(\theta_0), \theta_0[$, and no contacts on $]\sigma^-(\theta_0),\theta_0[$.  However, if $N(\sigma^-(\theta_0)+) = 0$, the only possible discontinuity would be a contact, at which $N$ is continuous, and so $N(\sigma^-(\theta_0)+) = N(\sigma^-(\theta_0))=0$ in that case.  Either way, $U$ is continuous on $]\sigma^-(\theta_0),\theta_0[$, hence constant by Theorem \myref{newthm1}, and so $\frac{dN}{d\theta} = L$.  \\
Suppose that $N(\sigma^-(\theta_0)) = c(\sigma^-(\theta_0)) = c_-.$  Then
\begin{alignat}{1}
|N_- - c_-| &= |N(\theta_0-)-N(\sigma^-(\theta_0))| = \Big|\lim_{\eta \nearrow \theta_0} \int_{\sigma^-(\theta_0)}^\eta L(\phi)d\phi \Big| \\
&\leq (\theta_0-\sigma^-(\theta_0)) |\vec{u}|_{\textrm{max}}.
\end{alignat}
Recall
\begin{alignat}{1}
|N_--c_-| &= c_+ \sqrt{\frac{1+z\frac{\gamma-1}{2\gamma}}{1+z\frac{\gamma+1}{2\gamma}}} \left(\sqrt{1+z}- \sqrt{1+z\frac{\gamma-1}{2\gamma}}\right) \\
&\geq c_{\textrm{min}} \sqrt{\frac{\gamma-1}{\gamma+1}} \left(\sqrt{1+z}- \sqrt{1+z\frac{\gamma-1}{2\gamma}}\right) \\
&\geq c_{\textrm{min}}\sqrt{\frac{\gamma-1}{\gamma+1}} \frac{\left(\sqrt{1+z_{\textrm{max}}}-\sqrt{1+z_{\textrm{max}}\frac{\gamma-1}{2\gamma}}\right)}{z_{\textrm{max}}}z,
\end{alignat}
since that function is concave for $z \geq 0$ and $0$ at $z=0$.  So,
\begin{alignat}{1}
(\theta_0-\sigma^-(\theta_0)) \geq \delta_L'' J(U;\theta_0).
\end{alignat}
for some $\delta_L'' > 0$.\\
Suppose instead that $N(\sigma^-(\theta_0))=0$.  Then,
\begin{alignat}{1}
N_- &= N(\theta_0-)-N(\sigma^-(\theta_0)) = \lim_{\eta \nearrow \theta_0} \int_{\sigma^-(\theta_0)}^\eta L(\phi)d\phi \\
&\leq (\theta_0-\sigma^-(\theta_0)) |\vec{u}|_{\textrm{max}}.
\end{alignat}
From before,
\begin{alignat}{1}
N_- = c_- \frac{\sqrt{1+z\frac{\gamma-1}{2\gamma}}}{\sqrt{1+z}} \geq c_{\textrm{min}} \sqrt{\frac{\gamma-1}{2\gamma}}.
\end{alignat}
Therefore,
\begin{alignat}{1}
(\theta_0-\sigma^-(\theta_0)) \geq \delta > 0
\end{alignat}
in this case, for some $\delta >0$.  Then, taking
\begin{alignat}{1}
\delta_L = \min\left(\delta_L', \delta_L'', \frac{\delta}{J_\textrm{max}}\right),
\end{alignat}
we obtain the desired result.  (Recall that there is an upper limit to shock strength for the phase space under consideration).  Analogous calculations yield the statement for backward-facing shocks.
\end{proof}
\section{Decomposition of the domain; regularity}
We now divide $[0, 2\pi)$ into the sets at which different behavior occurs.  We shall assume that there are no stagnation points.
\begin{alignat}{1}
\mathcal{C} &:= \left\{ \theta\, \Big| \, U \mbox{ is continuous at }\theta, |N(\theta)| \neq c(\theta) \mbox{ and } \neq 0 \right\} \\
\mathcal{S}_F &:= \left\{ \theta \, \Big| \,  U \mbox{ has a forward facing shock at } \theta \right\} \\
\mathcal{S}_B &:= \left\{ \theta \, \Big| \,  U \mbox{ has a backward facing shock at } \theta \right\} \\
\mathcal{S}_C &:= \left\{ \theta \, \Big| \,  N(\theta) = 0 \right\} \\
\mathcal{R}_F &:= \left\{ \theta \, \Big| \,  U \mbox{ is continuous at } \theta, N(\theta) = c(\theta) \right\} \\
\mathcal{R}_B &:= \left\{ \theta \, \Big| \,  U \mbox{ is continuous at } \theta, N(\theta) = -c(\theta) \right\}
\end{alignat}
\begin{lemma} \mylabel{unifseparation2}
Recall Lemma \myref{unifseparation}.  In a similar manner, $\mathcal{R}_F$ is uniformly separated from $\mathcal{R}_B \cup \mathcal{S}_B \cup \mathcal{S}_C$, and $\mathcal{R}_B$ is uniformly separated from $\mathcal{R}_F \cup \mathcal{S}_F \cup \mathcal{S}_C$.
\end{lemma}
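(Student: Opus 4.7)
The plan is to reduce everything to uniform separation from $\mathcal{S}_C$, which is already essentially proved in Lemma \ref{finitecontact}, and then bootstrap to the remaining sets via an intermediate-value argument on the sign of $N$.

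First I would note that for any $\theta_0\in\mathcal{S}_C$, the construction in the proof of Lemma \ref{finitecontact} produces a neighborhood $]\theta_0-\delta_0,\theta_0+\delta_0[$, with $\delta_0>0$ independent of $U$, on which $|N|<c_{\textrm{min}}\rho_{\textrm{min}}/(2\rho_{\textrm{max}})<c_{\textrm{min}}$. Since on $\mathcal{R}_F\cup\mathcal{R}_B$ one has $|N|=c\geq c_{\textrm{min}}$, no point of $\mathcal{R}_F\cup\mathcal{R}_B$ can lie in this neighborhood, so $\mathcal{R}_F$ and $\mathcal{R}_B$ are each at distance $\geq\delta_0$ from $\mathcal{S}_C$. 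This covers the separation of $\mathcal{R}_F$ from $\mathcal{S}_C$ (and symmetrically for $\mathcal{R}_B$).

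Second, I would establish that $N$ cannot change sign except by vanishing continuously. Away from shocks $N$ is continuous (contacts have $N=0$ on both sides, so they do not destroy continuity of $N$). At a shock, Lemma \ref{forwback} tells us both one-sided limits of $N$ are strictly positive (forward facing) or strictly negative (backward facing), and the lower bound $|N_\pm|\geq c_{\textrm{min}}\rho_{\textrm{min}}/\rho_{\textrm{max}}$ from Lemma \ref{supersubsonic} rules out the value $0$ on either side. Hence the sign of $N$ is preserved across every shock. Combined with the remark after Theorem \ref{newthm3} that discontinuities form a countable discrete set (so only finitely many shocks lie in any compact interval), this yields: if $N(\theta_1)>0$ and some one-sided limit of $N$ at $\theta_2$ is strictly negative, then an ordinary IVT applied on the finitely many shock-free subintervals between $\theta_1$ and $\theta_2$ produces a point $\theta_*$ strictly between them with $N(\theta_*)=0$, i.e.\ $\theta_*\in\mathcal{S}_C$.

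Third, I would combine the two steps. Let $\theta_1\in\mathcal{R}_F$, so $N(\theta_1)=c(\theta_1)>0$. For $\theta_2\in\mathcal{R}_B$ we have $N(\theta_2)=-c(\theta_2)<0$; for $\theta_2\in\mathcal{S}_B$ both one-sided limits of $N$ at $\theta_2$ are strictly negative; for $\theta_2\in\mathcal{S}_C$ we have $N(\theta_2)=0$. In the first two cases step two yields $\theta_*\in\mathcal{S}_C$ strictly between $\theta_1$ and $\theta_2$, and in the third case we simply take $\theta_*=\theta_2$. In every case step one gives $|\theta_1-\theta_*|\geq\delta_0$, so $|\theta_1-\theta_2|\geq|\theta_1-\theta_*|\geq\delta_0$. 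The claim for $\mathcal{R}_B$ is symmetric (interchange forward/backward and positive/negative).

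The main obstacle is step two, where one must verify carefully that no shock can hide a sign change of $N$ and that only finitely many shocks can clutter the interval between $\theta_1$ and $\theta_2$; both points are handled, respectively, by the Lax-type lower bound of Lemma \ref{supersubsonic} together with Lemma \ref{forwback}, and by the countable-discrete structure of the discontinuity set established just after Theorem \ref{newthm3}. Once these are in hand, the separation is an immediate consequence of the uniform buffer around any point of $\mathcal{S}_C$.
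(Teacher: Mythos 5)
Your overall route is exactly the paper's: a uniform buffer of radius $\delta_0$ around every point of $\mathcal{S}_C$ coming from the computation in Lemma \ref{finitecontact}, plus the observation that $N$ must actually vanish somewhere between a point where it is positive and a point where it (or its one-sided limits) is negative, as in Lemma \ref{unifseparation}. Steps one and three are fine. The flaw is in step two: from ``the discontinuity set is countable and discrete'' you infer that only finitely many shocks lie in a compact interval, and you then run the intermediate value theorem on the finitely many shock-free subintervals. Discreteness only says the set has no limit points \emph{in itself}; it does not give local finiteness (compare $\{1/n\}\subset[0,1]$). In this paper that is not a hypothetical loophole: Lemma \ref{limpoints} says that limit points of $\mathcal{S}_F$ lie in $\mathcal{R}_F$ (and of $\mathcal{S}_B$ in $\mathcal{R}_B$), and the discussion after Theorem \ref{Lpos} notes that solutions with infinitely many shocks exist. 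So precisely at the point $\theta_1\in\mathcal{R}_F$ from which your argument starts, infinitely many forward shocks may accumulate, and the decomposition into ``finitely many shock-free subintervals'' is not available.

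The conclusion of step two is nevertheless true and can be repaired without any finiteness. One-sided limits of $U$, hence of $N$, exist at every point (Theorems \ref{newthm2}, \ref{newthm3} and the right-continuous modification), and the jump relation $\rho(\theta-)N(\theta-)=\rho(\theta+)N(\theta+)$ with $\rho>0$ shows that $N(\theta-)$ and $N(\theta+)$ have the same strict sign or vanish together, with $N(\theta)=N(\theta+)$. Hence, if $N$ had no zero on $[\theta_1,\theta_2]$, each of the sets $\{N>0\}$ and $\{N<0\}$ would be closed in $[\theta_1,\theta_2]$: if $\theta_n\to\theta$ with $N(\theta_n)>0$, the relevant one-sided limit of $N$ at $\theta$ is $\geq 0$, and it cannot be $0$ (that would force $N(\theta)=0$), so $N(\theta)>0$; likewise for $\{N<0\}$. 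Two nonempty closed sets would then partition the connected interval $[\theta_1,\theta_2]$, a contradiction; alternatively a supremum argument on $\{\theta\,:\,N>0 \mbox{ on } [\theta_1,\theta]\}$ does the same job. With that substitution your proof is correct and coincides with what the paper intends by ``$N$ must transition through zero, not jump between positive and negative values.''
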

\begin{proof}
The proof is similar to the proof of Lemma \myref{unifseparation}.
\end{proof}
\begin{lemma} \mylabel{supersubest}
There exists a constant $C_{\mathcal{S}}$ such that if $\theta_0 \in \mathcal{S}_F$, and \\ $\theta \notin ]\sigma^-(\theta_0),\sigma^+(\theta_0)[$, then
\begin{alignat}{1}
J(U;\theta_0), |N(\theta_0+)-c(\theta_0+)|, |N(\theta_0-)-c(\theta_0-)| \leq C_\mathcal{S}|\theta-\theta_0|.
\end{alignat}
Similarly, if $\theta_0 \in \mathcal{S}_B$, and $\theta \notin ]\sigma^-(\theta_0),\sigma^+(\theta_0)[$, then
\begin{alignat}{1}
J(U;\theta_0),|N(\theta_0+)+c(\theta_0+)|,|N(\theta_0-)+c(\theta_0-)| \leq C_\mathcal{S}|\theta-\theta_0|.
\end{alignat}
\end{lemma}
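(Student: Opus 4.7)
The plan is to chain together three ingredients already in hand: Theorem \ref{uniflax}, which converts the shock strength $J(U;\theta_0)$ into a lower bound on $\sigma^\pm(\theta_0)-\theta_0$; the explicit polytropic formulas from Section 8 expressing $N_+-c_+$ and $N_--c_-$ in terms of the shock strength $z$; and the equivalence $z\sim J(U;\theta_0)$ proved in the previous section.

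First I would dispose of the $J(U;\theta_0)$ bound. Since $\theta\notin {]\sigma^-(\theta_0),\sigma^+(\theta_0)[}$, either $\theta\geq\sigma^+(\theta_0)$ or $\theta\leq\sigma^-(\theta_0)$. Theorem \ref{uniflax} gives $\sigma^+(\theta_0)-\theta_0\geq\delta_L J(U;\theta_0)$ and $\theta_0-\sigma^-(\theta_0)\geq\delta_L J(U;\theta_0)$, so in either case $|\theta-\theta_0|\geq\delta_L J(U;\theta_0)$, and hence $J(U;\theta_0)\leq\delta_L^{-1}|\theta-\theta_0|$.

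For the Lax differences I would use the explicit formulas
\begin{alignat}{1}
N_+-c_+ &= c_+\left(\sqrt{1+z\tfrac{\gamma+1}{2\gamma}}-1\right),\\
|N_--c_-| &= c_+\sqrt{\tfrac{1+z\frac{\gamma-1}{2\gamma}}{1+z\frac{\gamma+1}{2\gamma}}}\left(\sqrt{1+z}-\sqrt{1+z\tfrac{\gamma-1}{2\gamma}}\right).
\end{alignat}
Both right-hand sides are smooth nonnegative functions of $z$ on the compact interval $[0,z_{\textrm{max}}]$ that vanish at $z=0$ (the second a difference of two $\sqrt{1+\alpha z}$ terms). Each is therefore Lipschitz in $z$ on that interval, giving $|N_\pm-c_\pm|\leq C z$ for a constant depending only on $\gamma$ and the phase-space bounds. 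Combining with the equivalence $z\leq C' J(U;\theta_0)$ from the shock-strength lemma and with the bound of the previous paragraph yields
\begin{alignat}{1}
|N(\theta_0\pm)-c(\theta_0\pm)|\leq C'' J(U;\theta_0)\leq C''\delta_L^{-1}|\theta-\theta_0|,
\end{alignat}
where I use that $U$ is constant on $]\sigma^-(\theta_0),\theta_0[$ and $]\theta_0,\sigma^+(\theta_0)[$ (Theorem \ref{newthm2}), so the one-sided limits $N(\theta_0\pm)$ and $c(\theta_0\pm)$ are just the constant values on those neighborhoods. Taking $C_{\mathcal{S}}$ to be the maximum of the three constants obtained settles the forward-facing case.

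The backward-facing case is handled identically: one rewrites the analogous polytropic relations with the roles of $\pm$ swapped and $N$ negative, so $|N_\pm+c_\pm|$ replaces $|N_\pm-c_\pm|$ but the same Lipschitz-in-$z$ argument applies. There is no substantive obstacle here — the statement is essentially a repackaging of Theorem \ref{uniflax} and the equivalence $z\sim J(U;\theta_0)$; the only thing requiring modest care is confirming that the explicit expressions for $N_\pm-c_\pm$ admit a linear-in-$z$ upper bound (not just a lower bound) on the bounded shock-strength range, which is immediate from their smoothness and vanishing at $z=0$.
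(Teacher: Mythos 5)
Your proposal is correct and follows essentially the same route as the paper: bound $J(U;\theta_0)$ via Theorem \ref{uniflax}, then bound the Lax differences $|N_\pm \mp c_\pm|$ linearly in $z$ using the explicit polytropic formulas and the equivalence $z \sim J(U;\theta_0)$. The only cosmetic difference is that the paper obtains the linear-in-$z$ bound by a concavity argument with the derivative at $z=0$, whereas you invoke smoothness and vanishing at $z=0$ on the compact range $[0,z_{\textrm{max}}]$ --- the same content.
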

\begin{proof}
Suppose $\theta_0 \in \mathcal{S}_F$.  From Theorem \myref{uniflax}, we have
\begin{alignat}{1}
\delta_L J(U;\theta_0) \leq \min \Big(|\sigma^+(\theta_0)-\theta_0|, |\theta_0-\sigma^-(\theta_0)|\Big) \leq |\theta-\theta_0|.
\end{alignat}
Therefore,
\begin{alignat}{1}
J(U;\theta_0) \leq \delta_L^{-1}|\theta-\theta_0|.
\end{alignat}
From previous calculations, we have that
\begin{alignat}{1}
N_+ - c_+ &= c_+ \left( \sqrt{1+z\frac{\gamma+1}{2\gamma}} -1 \right) \leq c_\textrm{max}\left( \sqrt{1+z\frac{\gamma+1}{2\gamma}} -1 \right).
\end{alignat}
The function on the right is 0 at $z=0$, concave down, and has derivative at $z=0$ equal to $\frac{c_\textrm{max}(\gamma+1)}{4\gamma}$.  Therefore
\begin{alignat}{1}
|N_+-c_+| \leq \frac{c_\textrm{max}(\gamma+1)}{4\gamma}z \leq C_\mathcal{S}' J(U;\theta_0) \leq C_\mathcal{S}\delta_L^{-1}|\theta-\theta_0|.
\end{alignat}
Also recall that
\begin{alignat}{1}
|N_--c_-| &= c_+ \sqrt{\frac{1+z\frac{\gamma-1}{2\gamma}}{1+z\frac{\gamma+1}{2\gamma}}} \left(  \sqrt{1+z}-\sqrt{1+z\frac{\gamma-1}{2\gamma}} \right)\\
&\leq c_\textrm{max}\left(  \sqrt{1+z}-\sqrt{1+z\frac{\gamma-1}{2\gamma}} \right).
\end{alignat}
This function is 0 at $z=0$, concave down, and has derivative at $z=0$ equal to $c_\textrm{max}(\frac{1}{2}-\frac{\gamma-1}{4\gamma})$.  Therefore,
\begin{alignat}{1}
|N_- - c_-| \leq c_\textrm{max} \frac{\gamma+1}{4\gamma}z \leq C_\mathcal{S}'' J(U;\theta_0) \leq C_\mathcal{S}'' \delta_L ^{-1} |\theta-\theta_0|.
\end{alignat}
Taking
\begin{alignat}{1}
C_\mathcal{S} = \max(\delta_L^{-1}, C_\mathcal{S}' \delta_L^{-1}, C_\mathcal{S}'' \delta_L^{-1})
\end{alignat}
gives the desired result.  A similar argument works for $\theta_0 \in \mathcal{S}_B$.
\end{proof}
\begin{lemma} \mylabel{limpoints}
Recall that $\mathcal{S}_C$ is a finite set of points, and therefore can have no limit points.  If $\theta$ is a limit point of $\mathcal{S}_F$, then $\theta \in \mathcal{R}_F$.  If $\theta$ is a limit point of $\mathcal{S}_B$, then $\theta \in \mathcal{R}_B$.
\end{lemma}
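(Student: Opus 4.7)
The plan is to show, assuming $\theta$ is a limit point of $\mathcal{S}_F$, that $U$ must be continuous at $\theta$ by excluding every possible type of discontinuity, and then to pass to the limit in the Lax conditions at the approaching shocks to conclude $N(\theta)=c(\theta)$.

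Let $\theta_n \in \mathcal{S}_F$ with $\theta_n \to \theta$ and $\theta_n \neq \theta$ for all $n$. I would first argue that $\theta$ itself cannot be a discontinuity. By Lemma \myref{forwback}, every discontinuity is a forward shock, a backward shock, or a contact, so there are only three cases to rule out. If $\theta \in \mathcal{S}_C$, Lemma \myref{finitecontact} asserts $|\theta - \theta'| \geq \delta > 0$ for every shock $\theta'$, contradicting $\theta_n \to \theta$. If $\theta \in \mathcal{S}_B$, Lemma \myref{unifseparation} yields the same kind of obstruction via the uniform separation of forward- and backward-facing shocks. Finally, if $\theta \in \mathcal{S}_F$, Theorem \myref{newthm2} provides the constancy intervals $]\sigma^-(\theta),\theta[$ and $]\theta,\sigma^+(\theta)[$, so $]\sigma^-(\theta),\sigma^+(\theta)[$ is an open neighborhood of $\theta$ containing no shock other than $\theta$ itself; but $\theta_n \to \theta$ with $\theta_n \neq \theta$ contradicts this. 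Hence $U$ is continuous at $\theta$.

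With continuity established, $U(\theta_n\pm) \to U(\theta)$, and consequently $N(\theta_n\pm) \to N(\theta)$ and $c(\theta_n\pm) \to c(\theta)$. Lemma \myref{supersubsonic} at each $\theta_n \in \mathcal{S}_F$ gives $N(\theta_n+) > c(\theta_n+) > 0$ and $0 < N(\theta_n-) < c(\theta_n-)$; passing to the limit in both inequalities forces $N(\theta) \geq c(\theta)$ and $N(\theta) \leq c(\theta)$, hence $N(\theta) = c(\theta)$, placing $\theta \in \mathcal{R}_F$. The case of a limit point of $\mathcal{S}_B$ proceeds identically, using Lemma \myref{unifseparation} to exclude $\theta \in \mathcal{S}_F$, Lemma \myref{finitecontact} to exclude $\theta \in \mathcal{S}_C$, and the same constant-neighborhood argument to exclude $\theta \in \mathcal{S}_B$; the reversed Lax condition then delivers $N(\theta) = -c(\theta)$, so $\theta \in \mathcal{R}_B$. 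I do not anticipate a serious obstacle here, since the quantitative work is already absorbed into the cited lemmas; the only real content is case (c) above, where one must recall that a shock point cannot be accumulated by other shocks because its own constant neighborhoods are shock-free.
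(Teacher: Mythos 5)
Your proof is correct, but it takes a different route from the paper's. The paper does not rule out the three discontinuity types at $\theta$ one by one; instead it picks auxiliary points $\theta_n' \in \, ]\sigma^-(\theta_n),\theta_n[$ inside the constant one-sided neighborhoods of the accumulating shocks and invokes the quantitative estimate of Lemma \myref{supersubest} (applicable precisely because $\theta \notin \, ]\sigma^-(\theta_n),\sigma^+(\theta_n)[$) to get $|N(\theta_n')-c(\theta_n')| = \bO(|\theta_n-\theta|) \rightarrow 0$; this single sonic-approaching sequence simultaneously excludes a shock or contact at $\theta$ (near either of those $|N-c|$ is bounded away from zero) and forces $N(\theta)=c(\theta)$. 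You instead exclude a discontinuity at $\theta$ softly --- via Lemma \myref{finitecontact} (contact vs.\ shock separation), Lemma \myref{unifseparation} (forward vs.\ backward shock separation), and the shock-free constant neighborhoods of Theorem \myref{newthm2} --- and then squeeze the Lax inequalities of Lemma \myref{supersubsonic}, $N(\theta_n+)>c(\theta_n+)$ and $N(\theta_n-)<c(\theta_n-)$, in the limit using continuity of $U$ at $\theta$ and convergence of the one-sided limits $U(\theta_n\pm)\rightarrow U(\theta)$. Both arguments are sound and rest on previously established results, so there is no circularity; what the approaches buy is different. Yours avoids Lemma \myref{supersubest} entirely, hence does not need the explicit polytropic shock-strength formulas and would work under the general equation-of-state hypotheses of Lemma \myref{supersubsonic}, but it is purely qualitative. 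The paper's argument yields a rate --- $|N-c|$ vanishes linearly in the distance to the accumulation point --- which is of a piece with the Lipschitz estimates (Lemmas \myref{uslip} and following) that this lemma feeds into, and it reuses machinery already set up for the polytropic case.
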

\begin{proof}
Consider $\left\{ \theta_n \right\} \searrow \theta$ be a strictly decreasing sequence in $\mathcal{S}_F$.  If $\theta$ is a limit point of $\mathcal{S}_F$, then $\theta \notin ]\sigma^-(\theta_n),\sigma^+(\theta_n)[$ for all $n$.  For all $n$, choose $\theta_n' \in ]\sigma^-(\theta_n),\theta_n[$.  Then,
\begin{alignat}{1}
|N(\theta_n')-c(\theta_n')| &= |N(\theta_n')-c(\theta_n-)|, \mbox{ since $U$ is constant on } ]\sigma^-(\theta_n),\theta_n[,  \notag \\
&= |N(\theta_n')-N(\theta_n-)| + |N(\theta_n-)-c(\theta_n-)| \\
&\leq |\vec{u}|_\textrm{max}|\theta_n'-\theta_n| + C_\mathcal{S}|\theta-\theta_n|, \mbox{ from Lemma \myref{supersubest},} \\
&= \bO(|\theta_n-\theta|).
\end{alignat}
Thus we have a sequence converging to $\theta$ such that 
\begin{alignat}{1}
\lim_{n \rightarrow \infty} |N(\theta'_n)-c(\theta'_n)| = 0.
\end{alignat}
This eliminates the possibility of a shock or contact occurring at $\theta$, and so $U$ is continuous at $\theta$.  Therefore, $\theta \in \mathcal{R}_F$ by definition.  A similar argument works for limit points of $\mathcal{S}_B$.
\end{proof}
\begin{lemma}
If $\theta \in \mathcal{C}$, then $U$ is constant on a neighborhood $]\kappa^-(\theta),\kappa^+(\theta)[$ containing $\theta$.  ($\kappa^\pm(\theta)$ are taken to be maximal so that each is in either $\mathcal{S}_C, \mathcal{S}_F, \mathcal{S}_B, \mathcal{R}_F,$ or $\mathcal{R}_B$.)
\end{lemma}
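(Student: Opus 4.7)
The plan is to find a neighborhood of $\theta$ on which $U$ is both continuous and satisfies $|N|\neq c$, $N\neq 0$ pointwise, then invoke Theorem \myref{newthm1} to conclude constancy, and finally extend to the maximal such interval.

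\textbf{Step 1 (excluding shocks and contacts near $\theta$).} The first task is to rule out any discontinuity in some open interval around $\theta$. Since $\theta\in\mathcal{C}$ we have $|N(\theta)|\neq c(\theta)$, so $\theta\notin \mathcal{R}_F\cup\mathcal{R}_B$. By Lemma \myref{limpoints}, any limit point of $\mathcal{S}_F$ lies in $\mathcal{R}_F$ and any limit point of $\mathcal{S}_B$ lies in $\mathcal{R}_B$, so $\theta$ cannot be a limit point of $\mathcal{S}_F\cup\mathcal{S}_B$. By Lemma \myref{finitecontact}, $\mathcal{S}_C$ is finite. Combining these three facts, I can choose an open neighborhood $I_0$ of $\theta$ that meets $\mathcal{S}_F\cup\mathcal{S}_B\cup\mathcal{S}_C$ only possibly at $\theta$ itself; but $\theta\in\mathcal{C}$ is a continuity point, so in fact $I_0$ contains no shocks or contacts at all. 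Hence $U$ is continuous throughout $I_0$.

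\textbf{Step 2 (pointwise transversality in a smaller neighborhood).} On $I_0$, the functions $N(\theta')=u(\theta')\sin\theta'-v(\theta')\cos\theta'$ and $c(\theta')$ are continuous in $\theta'$, since $U$ is continuous there and $c$ depends smoothly on $U$ on $\mathcal{P}$. Because $|N(\theta)|\neq c(\theta)$ and $N(\theta)\neq 0$, continuity furnishes a subinterval $I\subset I_0$ containing $\theta$ on which $|N(\theta')|\neq c(\theta')$ and $N(\theta')\neq 0$ hold pointwise. Theorem \myref{newthm1} then applies directly on $I$, yielding that $U$ is constant on $I$.

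\textbf{Step 3 (maximal extension and endpoint classification).} Define
\begin{alignat}{1}
    \kappa^+(\theta):=\sup\{\eta>\theta : U\text{ is constant on }]\theta,\eta[\}, \qquad \kappa^-(\theta):=\inf\{\eta<\theta : U\text{ is constant on }]\eta,\theta[\}.
\end{alignat}
Step 2 shows both sets are nonempty, so $\kappa^-(\theta)<\theta<\kappa^+(\theta)$. Constancy on the open interval $]\kappa^-(\theta),\kappa^+(\theta)[$ follows from taking the union of the constancy subintervals. For the endpoint classification, observe that (under the standing no-stagnation hypothesis) the whole circle decomposes as $\mathcal{C}\cup\mathcal{S}_F\cup\mathcal{S}_B\cup\mathcal{S}_C\cup\mathcal{R}_F\cup\mathcal{R}_B$: any point is either continuous (and then falls in $\mathcal{C}$, $\mathcal{R}_F$, $\mathcal{R}_B$, or $\mathcal{S}_C$ depending on $|N|$ versus $c$ and $0$) or discontinuous (and then in $\mathcal{S}_F\cup\mathcal{S}_B\cup\mathcal{S}_C$ by Lemma \myref{forwback}). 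If $\kappa^+(\theta)$ were in $\mathcal{C}$, applying the argument of Steps 1-2 at $\kappa^+(\theta)$ produces a constant neighborhood on both sides, contradicting maximality. Hence $\kappa^+(\theta)\in\mathcal{S}_F\cup\mathcal{S}_B\cup\mathcal{S}_C\cup\mathcal{R}_F\cup\mathcal{R}_B$, and likewise for $\kappa^-(\theta)$.

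The main work has already been done in the preceding theorems and lemmas, so no serious obstacle remains. The only point requiring care is Step 1: one must check that the reason $\theta$ admits a discontinuity-free neighborhood is precisely that $\theta\notin\mathcal{R}_F\cup\mathcal{R}_B$, since Lemma \myref{limpoints} permits $\mathcal{S}_F$ (resp.\ $\mathcal{S}_B$) to accumulate exactly at points of $\mathcal{R}_F$ (resp.\ $\mathcal{R}_B$); the defining condition $|N(\theta)|\neq c(\theta)$ of $\mathcal{C}$ is what cleanly rules this out.
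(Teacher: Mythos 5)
Your proposal is correct and follows essentially the same route as the paper: Lemma \ref{limpoints} (plus finiteness of $\mathcal{S}_C$) gives a discontinuity-free neighborhood of $\theta$, continuity of $N$ and $c$ then propagates the conditions $|N|\neq c$, $N\neq 0$ to a smaller interval, and Theorem \ref{newthm1} yields constancy, with $\kappa^\pm(\theta)$ taken maximal. Your Steps 1--3 simply spell out in more detail what the paper's short proof asserts.
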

\begin{proof}
From Lemma \myref{limpoints}, $\theta$ is not a limit point of $\mathcal{S}_C, \mathcal{S}_F, $ or $\mathcal{S}_B$.  Therefore, $U$ is continuous on a neighborhood of $\theta$.  Therefore, if $|N(\theta)| \neq c(\theta)$ or $0$, then this will also be true on a neighborhood of $\theta$.  Then, Theorem \myref{newthm1} applies and so $U$ is constant on some neighborhood containing $\theta$.  Clearly $\kappa^\pm(\theta)$ can be taken to satisfy the requirement in the statement of the lemma.
\end{proof}
\begin{lemma}
Assume there are no stagnation points.  Define, for $\theta \in [0, 2\pi[$, 
\begin{alignat}{1}
U_S(\theta) = \sum_{\phi \in[0,\theta[ \cap (\mathcal{S}_C \cup \mathcal{S}_F \cup \mathcal{S}_B)}(U(\phi+)-U(\phi-)).
\end{alignat}
Then $U_S$ is a right-continuous saltus function (so, by definition, is of bounded variation).
\end{lemma}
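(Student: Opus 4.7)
The plan is to reduce the lemma to showing that the vector-valued series $\sum_\phi \bigl(U(\phi+)-U(\phi-)\bigr)$ over all discontinuity points $\phi$ is absolutely convergent. Once that is established, $U_S$ is a pointwise-defined saltus function whose total variation is controlled by $\sum_\phi |U(\phi+)-U(\phi-)|$; the right-continuity is then built into the half-open convention $[0,\theta[$ used in the definition, and no accumulation of jumps is possible because only finitely many jumps can exceed any prescribed threshold when the sum is absolutely summable. I would split the discontinuity set into the three pieces $\mathcal{S}_C$, $\mathcal{S}_F$, $\mathcal{S}_B$ and handle them separately.

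The contact piece is immediate from Lemma \ref{finitecontact}: under the no-stagnation assumption, $\mathcal{S}_C$ is a finite set, and the phase space $\mathcal{P}$ is bounded, so $\sum_{\phi \in \mathcal{S}_C} |U(\phi+)-U(\phi-)|$ is a finite sum of bounded terms and contributes a finite amount to the total variation.

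For the shock pieces $\mathcal{S}_F$ and $\mathcal{S}_B$, the quantitative machinery of Theorem \ref{uniflax} does the real work. To each shock $\theta_0 \in \mathcal{S}_F \cup \mathcal{S}_B$ I would associate the open interval
\[ I(\theta_0) \;:=\; \bigl(\theta_0,\; \theta_0 + \delta_L J(U;\theta_0)\bigr), \]
on which $U$ is constant by Theorem \ref{uniflax}, with $\delta_L$ independent of $U$. The key claim is that the intervals $\{I(\theta_0)\}_{\theta_0 \in \mathcal{S}_F \cup \mathcal{S}_B}$ are pairwise disjoint: if $\theta_0 < \theta_1$ are two shocks, Theorem \ref{newthm2} says $U$ is constant on $(\theta_0, \sigma^+(\theta_0))$, so $\theta_1$ cannot lie in that interval, giving $\theta_1 \geq \sigma^+(\theta_0) \geq \theta_0 + \delta_L J(U;\theta_0)$ and hence $I(\theta_1)$ entirely to the right of $I(\theta_0)$. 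Additivity of Lebesgue measure on $[0,2\pi)$ then forces $\sum_{\theta_0} \delta_L J(U;\theta_0) \leq 2\pi$, and since $J(U;\theta_0) = |U(\theta_0+)-U(\theta_0-)|$ by definition, absolute convergence of the shock series follows.

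The main substantive step is the disjointness of the $I(\theta_0)$, and even this reduces directly to the constancy statement of Theorem \ref{newthm2} combined with the uniform lower bound of Theorem \ref{uniflax}. All the genuine difficulty — in particular, coupling shock strength quantitatively to neighborhood size with a constant independent of $U$ — was already absorbed into Theorem \ref{uniflax}; without such a uniform lower bound, infinitely many small shocks could in principle accumulate with separations that bound the distances but fail to bound the sum of jump magnitudes, and it is precisely this scenario that the polytropic-specific estimates in Theorem \ref{uniflax} exclude.
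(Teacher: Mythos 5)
Your proposal is correct and follows essentially the same route as the paper: contacts are handled by finiteness (Lemma \ref{finitecontact}) plus boundedness of the phase space, and the shock jumps are summed by pairing each shock with its constancy neighborhood of length at least $\delta_L J(U;\theta_0)$ from Theorem \ref{uniflax}, whose pairwise disjointness in $[0,2\pi)$ bounds $\sum J$ by a constant multiple of $2\pi$. The only cosmetic difference is that you use the one-sided neighborhoods $]\theta_0,\theta_0+\delta_L J(U;\theta_0)[$, yielding the constant $\delta_L^{-1}$, whereas the paper uses the full intervals $]\sigma^-(\phi),\sigma^+(\phi)[$ and the constant $(2\delta_L)^{-1}$.
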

\begin{proof}
We have
\begin{alignat}{1}
\sum_{\phi \in (\mathcal{S}_C \cup \mathcal{S}_F \cup \mathcal{S}_B)} |U(\phi+)-U(\phi-)| &=
\sum_{\phi \in ( \mathcal{S}_F \cup \mathcal{S}_B)} J(U;\phi) + \sum_{\phi \in \mathcal{S}_C} |U(\phi+)-U(\phi-)| \\
&\leq (2\delta_L)^{-1} \sum_{\phi \in \mathcal{S}_F \cup \mathcal{S}_B} (\sigma^+(\phi)-\sigma^-(\phi)) + \sum_{\phi \in \mathcal{S}_C} C < \infty,
\end{alignat}
since the number of contacts is finite, the neighborhoods $]\sigma^-(\phi),\sigma^+(\phi)[$ are pairwise disjoint, the phase space is compact, and the domain is compact.  The $BV$ norm only depends on the equation of state and the bounds for the phase space.
\end{proof}
\begin{lemma} \mylabel{uslip}
For any $\theta_0 \in \mathcal{R}_F \cup \mathcal{R}_B$, $U_S$ satisfies a Lipschitz estimate based at $\theta_0$ for $\theta$ sufficiently close to $\theta_0$.  That is,
\begin{alignat}{1}
|U_S(\theta) - U_S(\theta_0)| \leq C_S |\theta-\theta_0|, \quad \forall \theta \mbox{ sufficiently close to } \theta_0.
\end{alignat}
Moreover, the Lipschitz constant $C_S$ is uniform in $\theta$ and independent of $U$.  
\end{lemma}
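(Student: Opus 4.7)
The plan is to isolate which discontinuities can appear near $\theta_0$ and then sum their jump sizes by packing them into pairwise disjoint intervals whose total length is controlled by $|\theta - \theta_0|$. Throughout, suppose first that $\theta_0 \in \mathcal{R}_F$. By Lemma \myref{unifseparation2} there is a $\delta_0 > 0$, depending only on the phase space, so that the interval $(\theta_0 - \delta_0, \theta_0 + \delta_0)$ contains no point of $\mathcal{R}_B \cup \mathcal{S}_B \cup \mathcal{S}_C$. Hence for $\theta$ with $|\theta - \theta_0| < \delta_0$ the only jumps contributing to $U_S(\theta) - U_S(\theta_0)$ come from forward-facing shocks in the interval between $\theta_0$ and $\theta$, and so by the triangle inequality $|U_S(\theta) - U_S(\theta_0)| \leq \sum_{\phi \in \mathcal{S}_F \cap I} J(U;\phi)$, where $I$ is the appropriate half-open interval.

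Consider the case $\theta_0 < \theta < \theta_0 + \delta_0$ and let $\phi_1 < \phi_2 < \cdots < \phi_k$ enumerate $\mathcal{S}_F \cap (\theta_0,\theta]$. I would use the left constant half-neighborhoods $J_j := \;]\sigma^-(\phi_j), \phi_j[$ provided by Theorem \myref{newthm2} and Theorem \myref{uniflax}. By Theorem \myref{uniflax},
\begin{alignat}{1}
|J_j| = \phi_j - \sigma^-(\phi_j) \geq \delta_L\, J(U;\phi_j).
\end{alignat}
The $J_j$ are pairwise disjoint: on $]\phi_j, \sigma^+(\phi_j)[$, $U$ is constant with $N > c$, while on $J_{j+1}$, $U$ is constant with $0 < N < c$, so these intervals cannot overlap, forcing $\sigma^-(\phi_{j+1}) \geq \sigma^+(\phi_j) > \phi_j$. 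Moreover $\sigma^-(\phi_1) \geq \theta_0$, for otherwise $\theta_0$ would lie in the open interval $J_1$ on which $U$ is constant with strict inequality $N < c$, contradicting $N(\theta_0) = c(\theta_0)$. Consequently all $J_j$ are contained in $[\theta_0, \theta]$, so
\begin{alignat}{1}
\delta_L \sum_{j=1}^k J(U;\phi_j) \leq \sum_{j=1}^k |J_j| \leq \theta - \theta_0,
\end{alignat}
whence $|U_S(\theta) - U_S(\theta_0)| \leq \delta_L^{-1}(\theta - \theta_0)$. For $\theta_0 - \delta_0 < \theta < \theta_0$ the same argument applies with right half-neighborhoods $]\phi_j, \sigma^+(\phi_j)[$, using $\sigma^+(\phi_k) \leq \theta_0$ (else $\theta_0$ would lie in a constant supersonic region, again contradicting $\theta_0 \in \mathcal{R}_F$). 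The case $\theta_0 \in \mathcal{R}_B$ is analogous, interchanging forward- and backward-facing shocks. This yields the Lipschitz estimate with uniform constant $C_S = \delta_L^{-1}$.

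The only real step requiring care is the geometric packing in the middle paragraph: one must verify both the disjointness of the half-neighborhoods and the endpoint containment $\sigma^-(\phi_1) \geq \theta_0$. Both rest on the same observation, namely that $\mathcal{R}_F$ is characterized by the equality $N = c$ while the adjacent constant regions satisfy strict inequalities of opposite sign. Once this structural fact is in hand, Theorem \myref{uniflax} turns disjointness of lengths into a bound on the sum of jump strengths, which is the desired Lipschitz estimate.
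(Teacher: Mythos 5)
Your proof is correct, and while it rests on the same two pillars as the paper's argument --- the uniform separation of $\mathcal{R}_F$ from backward waves and contacts (Lemma \myref{unifseparation2}), and the lower bound of Theorem \myref{uniflax} converting each jump $J(U;\phi)$ into the length of an interval of constancy --- your execution is genuinely different and somewhat cleaner. The paper packs the full two-sided neighborhoods $]\sigma^-(\phi),\sigma^+(\phi)[$, which forces a three-way case analysis according to whether $\theta$ lies outside all such neighborhoods, in $]\sigma^-(\phi),\phi[$, or in $[\phi,\sigma^+(\phi)[$; in the last case the neighborhood protrudes past $\theta$, so packing alone fails and the paper invokes Lemma \myref{supersubest} to bound the final jump by $C_\mathcal{S}|\theta-\theta_0|$, arriving at the constant $(2\delta_L)^{-1}+C_\mathcal{S}$. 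By using only the half-neighborhood on the side facing $\theta_0$ (namely $]\sigma^-(\phi_j),\phi_j[$ when $\theta>\theta_0$ and $]\phi_j,\sigma^+(\phi_j)[$ when $\theta<\theta_0$), every interval you use is automatically contained in $[\theta_0,\theta]$, so the case split and Lemma \myref{supersubest} disappear and you obtain the single constant $\delta_L^{-1}$; your disjointness argument (a strictly supersonic and a strictly subsonic constant interval cannot overlap) and your endpoint argument ($\theta_0\in\mathcal{R}_F$ cannot lie in an interval where $N<c$ or $N>c$ strictly) are both sound. Two cosmetic points: the shocks in $]\theta_0,\theta[$ need not be finitely many (they may accumulate at points of $\mathcal{R}_F$ by Lemma \myref{limpoints}), so the enumeration $\phi_1<\cdots<\phi_k$ should be read as an arbitrary finite subfamily, the bound passing to the full sum by supremum, exactly as in the paper's own summation; and since $U_S$ sums jumps over $[0,\theta[$ and $\theta_0$ is a point of continuity, the relevant index set is $]\theta_0,\theta[$ rather than $(\theta_0,\theta]$, which only makes your upper bound more generous.
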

\begin{proof}
Consider $\theta_0 \in \mathcal{R}_F$, and $\theta > \theta_0$.  We only need to consider other forward facing shocks occurring between $\theta_0$ and $\theta$, since $\mathcal{R}_F$ is uniformly separated from $\mathcal{S}_B$ and $\mathcal{S}_C$.  
Suppose $\theta \notin ]\sigma^-(\phi),\sigma^+(\phi)[$ for any $\phi \in \mathcal{S}_F$.  Then,
\begin{alignat}{1}
|U_S(\theta)-U_S(\theta_0)| &\leq \sum_{\theta_0 < \phi < \theta} J(U;\phi)\\ 
& \leq (2\delta_L)^{-1}\sum_{\theta_0 < \phi < \theta} (\sigma^+(\phi)-\sigma^-(\phi)) \leq (2\delta_L)^{-1} |\theta-\theta_0|,
\end{alignat}
since the $]\sigma^-(\phi),\sigma^+(\phi)[$ are pairwise disjoint and contained in $[\theta_0, \theta]$ by assumption.  If $\theta \in ]\sigma^-(\phi),\phi[$ for some $\phi \in \mathcal{S}_F$, then the previous estimate holds for $\theta = \sigma^-(\phi)$, and $U_S$ is constant on $]\sigma^-(\phi),\phi[$ and so the result follows.  If $\theta \in [\phi, \sigma^+(\phi)[$ for some $\phi \in \mathcal{S}_F$, then apply the previous estimate for $\theta = \sigma^-(\phi)$, and then
\begin{alignat}{1}
|U_S(\theta)-U_S(\theta_0)| &\leq (2 \delta_L)^{-1}|(\sigma^-(\phi)-\theta_0) + J(U;\phi) \\
&\leq ((2 \delta_L)^{-1} + C_\mathcal{S})|\theta-\theta_0|.
\end{alignat}
from Lemma \myref{supersubest}.  Take $C_S = (2\delta_L)^{-1}+C_\mathcal{S}.$  Similar arguments work for $\theta< \theta_0$, and $\theta_0 \in \mathcal{R}_B$.  
\end{proof}
\begin{lemma}
For every $\theta_0 \in \mathcal{R}_F$, there exists a neighborhood containing $\theta_0$ such that 
\begin{alignat}{1} 
u(\theta) \sin \theta_0 - v(\theta) \cos \theta_0 -c(\theta)
\end{alignat}
 satisfies a Lipschitz condition based at $\theta_0$ for all $\theta$ in this neighborhood.  The Lipschitz constant is uniform for all such $\theta_0$ and is independent of $U$.  That is,
\begin{alignat}{1}
\Big|u(\theta) \sin \theta_0 - v(\theta) \cos \theta_0-c(\theta)\Big| \leq M|\theta-\theta_0|, \quad \forall \theta \mbox{ sufficiently close to } \theta_0,
\end{alignat}
(recalling that $(N(\theta_0)-c(\theta_0)) = 0$ for $\theta_0 \in \mathcal{R}_F$). \\
We have the similar estimate for 
\begin{alignat}{1}
u(\theta) \sin \theta_0 - v(\theta) \cos \theta_0+c(\theta) 
\end{alignat}  
with $\theta_0 \in \mathcal{R}_B.$
\end{lemma}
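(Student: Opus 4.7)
Writing $g(\theta) := u(\theta)\sin\theta_0 - v(\theta)\cos\theta_0 - c(\theta)$, the plan is to reduce the claim to a uniform estimate on $N(\theta)-c(\theta)$ (taking the appropriate one-sided limit at jumps). Adding and subtracting $N(\theta) = u(\theta)\sin\theta - v(\theta)\cos\theta$ gives
\[
 g(\theta) = \big[N(\theta) - c(\theta)\big] + u(\theta)(\sin\theta_0 - \sin\theta) - v(\theta)(\cos\theta_0 - \cos\theta) = \big[N(\theta) - c(\theta)\big] + \bO(|\theta - \theta_0|),
\]
uniformly in $U$, since $|u|,|v|$ are uniformly bounded on $\mathcal{P}$ and $\sin,\cos$ are $1$-Lipschitz. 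In particular $g(\theta_0) = N(\theta_0)-c(\theta_0) = 0$, so it suffices to show $|N(\theta\pm)-c(\theta\pm)| = \bO(|\theta-\theta_0|)$ for $\theta$ sufficiently close to $\theta_0$, with a constant independent of $U$.

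By Lemma \myref{unifseparation2} I may restrict to a neighborhood of $\theta_0$ disjoint from $\mathcal{S}_C \cup \mathcal{S}_B \cup \mathcal{R}_B$, so every nearby $\theta$ lies in $\mathcal{C} \cup \mathcal{R}_F \cup \mathcal{S}_F$, and I treat the three cases. For $\theta \in \mathcal{R}_F$ the bound is immediate from $N(\theta)=c(\theta)$. For $\theta \in \mathcal{S}_F$ I first verify that $\theta_0 \notin\,]\sigma^-(\theta),\sigma^+(\theta)[$: the two halves of that interval are constant-$U$ regions on which, by Theorem \myref{uniflax}, $N-c$ is strictly negative on the left and strictly positive on the right, incompatible with $N(\theta_0)=c(\theta_0)$. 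Lemma \myref{supersubest} then directly supplies $|N(\theta\pm)-c(\theta\pm)| \leq C_\mathcal{S}|\theta-\theta_0|$, and right-continuity of $U$ transfers the bound to $g(\theta)$.

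The main work is the case $\theta \in \mathcal{C}$, where $U$ is constant on a maximal open interval $]\kappa^-(\theta),\kappa^+(\theta)[$ whose endpoints, by the neighborhood restriction, lie in $\mathcal{S}_F \cup \mathcal{R}_F$. If $\theta_0 \in\,]\kappa^-(\theta),\kappa^+(\theta)[$, then continuity of $U$ at $\theta_0 \in \mathcal{R}_F$ forces $U(\theta_0)$ to agree with the constant value on the interval, whence $g(\theta) = u(\theta_0)\sin\theta_0 - v(\theta_0)\cos\theta_0 - c(\theta_0) = N(\theta_0)-c(\theta_0) = 0$ identically on the interval. Otherwise $\theta_0$ sits outside the interval, so one endpoint $\tau := \kappa^\pm(\theta) \in \mathcal{S}_F \cup \mathcal{R}_F$ lies between $\theta$ and $\theta_0$; right-continuity yields $g(\theta) = g(\tau+)$, and the already-established $\mathcal{R}_F$ or $\mathcal{S}_F$ estimate gives $|g(\tau+)| \leq C|\tau-\theta_0| \leq C|\theta-\theta_0|$. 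I expect the delicate thread unifying the three subcases to be the geometric verification that a point of $\mathcal{R}_F$ cannot sit in the interior of a ``wrong-signed'' constant-$U$ region --- each such verification reduces to incompatibility of $N(\theta_0)=c(\theta_0)$ with the definite sign of $N-c$ supplied by Theorems \myref{newthm2} and \myref{uniflax}. The symmetric statement for $\theta_0 \in \mathcal{R}_B$ follows by replacing $c$ with $-c$ and interchanging the roles of $\mathcal{S}_F / \mathcal{S}_B$ and $\mathcal{R}_F / \mathcal{R}_B$, using the backward-shock halves of Lemmas \myref{supersubest} and \myref{uniflax}.
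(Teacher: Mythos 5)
Your proposal is correct and follows essentially the same route as the paper: reduce to estimating $N(\theta)-c(\theta)$ up to an $\bO(|\theta-\theta_0|)$ trigonometric error, restrict to a neighborhood meeting only $\mathcal{C}\cup\mathcal{R}_F\cup\mathcal{S}_F$ via Lemma \myref{unifseparation2}, and split into the cases $\theta\in\mathcal{R}_F$, $\theta\in\mathcal{S}_F$ (Lemma \myref{supersubest} after checking $\theta_0\notin\,]\sigma^-(\theta),\sigma^+(\theta)[$), and $\theta\in\mathcal{C}$ (trace back to the endpoint $\kappa^\pm(\theta)$), with your use of the exact constancy of $g$ on constant-$U$ regions and your explicit treatment of $\theta_0$ interior to that region being harmless refinements of the paper's drift estimate $|N_\theta|\leq|\vec u|_{\textrm{max}}$. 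One small imprecision: when $\theta<\theta_0$ and the relevant endpoint $\tau=\kappa^+(\theta)$ is a shock, right-continuity gives $g(\theta)=g(\tau-)$, not $g(\tau+)$, but since Lemma \myref{supersubest} bounds both one-sided limits $|N(\tau\pm)-c(\tau\pm)|$ the estimate goes through unchanged.
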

\begin{proof}
Suppose $\theta_0 \in \mathcal{R}_F$.   We first prove the desired estimate for $|N(\theta)-c(\theta)|$. Take $\theta > \theta_0$ sufficiently close to $\theta_0$.  This means that either $\theta \in \mathcal{C}$, with $\kappa^-(\theta) \in \mathcal{R}_F \cup \mathcal{S}_F$, $\theta \in \mathcal{S}_F$, or $\theta \in \mathcal{R}_F$ (from Lemma \myref{unifseparation2}).\\
\\
Suppose $\theta \in \mathcal{R}_F$. \\

Then $|N(\theta)-c(\theta)| = 0$.\\
\\
Suppose $\theta \in \mathcal{S}_F$. \\
Then, recalling
\begin{alignat}{1}
\sigma^-(\theta)=\inf_{\eta < \theta} \left\{ \eta \Big| 0 < N(\phi)-c(\phi) < c(\phi), \quad \forall \phi \in ]\eta, \theta[ \right\},
\end{alignat}
it is clear that $\theta_0 \notin ]\sigma^-(\theta),\theta[$.  Therefore, Lemma \myref{supersubest} applies and
\begin{alignat}{1}
|N(\theta)-c(\theta)| = |N(\theta+)-c(\theta+)| \leq C_\mathcal{S}|\theta-\theta_0|,
\end{alignat}
(recalling we have made $U$ right continuous everywhere).  \\
\\
Finally, suppose $\theta \in \mathcal{C}.$  
If $\kappa^-(\theta) \in \mathcal{R}_F$, then
\begin{alignat}{1}
|N(\theta)-c(\theta)| &= |N(\theta)-c\big(\kappa^-(\theta)\big)| \\
&\leq |N(\theta)-N\big(\kappa^-(\theta)\big)| + |N\big(\kappa^-(\theta)\big)-c\big(\kappa^-(\theta)\big)| \\
&\leq |\vec{u}|_\textrm{max}|\kappa^-(\theta)-\theta| + 0 \\
& \leq |\vec{u}|_\textrm{max}|\theta-\theta_0|.
\end{alignat}
If $\kappa^-(\theta) \in \mathcal{S}_F$, then
\begin{alignat}{1}
|N(\theta)-c(\theta)| &= |N(\theta)-c\big(\kappa^-(\theta)\big)| \\
&= |N(\theta) - N\big(\kappa^-(\theta)\big)| + |N\big(\kappa^-(\theta)\big)-c\big(\kappa^-(\theta)\big)| \\
&\leq |\vec{u}|_\textrm{max}|\theta-\kappa^-(\theta)| + C_\mathcal{S}|\theta-\theta_0| \\
&\ \leq |\vec{u}|_\textrm{max}|\theta-\theta_0| + C_\mathcal{S}|\theta-\theta_0|.
\end{alignat}
Taking $M' = \max(C_\mathcal{S}, |\vec{u}|_\textrm{max})$ gives the desired estimate for $|N(\theta)-c(\theta)|$.  Then,
\begin{alignat}{1}
|u(\theta) \sin \theta_0 - v(\theta) \cos \theta_0 -c(\theta)| &= |N(\theta)-c(\theta)| + |u(\theta)\big(\sin \theta_0 - \sin \theta \big) - v(\theta) \big(\cos \theta_0 - \cos \theta \big)| \\
&\leq M' |\theta-\theta_0| + 2 |\vec{u}|_\textrm{max} |\theta-\theta_0|.
\end{alignat}
Taking $M := M' + 2 |\vec{u}|_\textrm{max}$ gives the desired result for $\theta > \theta_0$, $\theta$ sufficiently close to $\theta_0 \in \mathcal{R}_F$.\\

  Similar arguments work for $\theta < \theta_0$, and for $\theta_0 \in \mathcal{R}_B$. 
\end{proof}
We now recall the concept of genuine nonlinearity.  For fixed $\theta$, the quantities $N \pm c$ are genuinely nonlinear in the sense that
\begin{alignat}{1}
(N \pm c)_U r^\pm(U;\theta) \neq 0
\end{alignat}
for all $U$.  For simplicity we can compute the derivative in terms of the primitive variables, and recalling $c = \sqrt{\gamma\frac{p}{\rho}}$ we obtain
\begin{alignat}{1}
&(N \pm c)_U r^\pm(U;\theta) = \left(N \pm \sqrt{\gamma\frac{p}{\rho}}\right)_V V_U r^\pm(U;\theta) \\
&= \left( \begin{array}{cccc} \mp \frac{1}{2}\sqrt{\gamma\frac{p}{\rho^3}} & \sin \theta & -\cos \theta & \pm \frac{1}{2}\sqrt{\frac{\gamma}{p\rho}} \end{array} \right) \left( \begin{array}{cccc} 1 & 0 & 0 & 0 \\ u & \rho & 0 & 0 \\ v & 0 & \rho & 0 \\ \frac{u^2+v^2}{2} & \rho u & \rho v & \frac{1}{\gamma-1} \end{array}\right)^{-1} \left( \begin{array}{c} 1 \\ u \pm c\sin \theta  \\ v \mp c \cos \theta  \\ H \pm Nc \end{array}\right)  \notag \\
&=\left( \begin{array}{cccc} \mp \frac{1}{2}\frac{c}{\rho} & \sin \theta & -\cos \theta & \pm \frac{1}{2}\frac{c}{p} \end{array} \right) \left( \begin{array}{cccc} 1 & 0 & 0 & 0 \\ \frac{u}{\rho} & \frac{1}{\rho} & 0 & 0 \\ -\frac{v}{\rho} & 0 & \frac{1}{\rho} & 0 \\ (\gamma-1)\frac{u^2+v^2}{2} & -(\gamma-1)u & -(\gamma-1) v & \gamma-1 \end{array}\right) \left( \begin{array}{c} 1 \\ u \pm c\sin \theta  \\ v \mp c \cos \theta  \\ H \pm Nc \end{array}\right)  \notag \\
&= \left( \begin{array}{cccc} \mp \frac{1}{2}\frac{c}{\rho} & \sin \theta & -\cos \theta & \pm \frac{1}{2}\frac{c}{p} \end{array} \right) \left( \begin{array}{c} 1 \\ \pm \frac{c}{\rho} \sin\theta \\ \mp \frac{c}{\rho} \cos \theta \\ c^2 \end{array}\right) \notag \\
&= \mp \frac{1}{2} \frac{c}{\rho} \pm \frac{c}{\rho} \pm \frac{1}{2} \frac{c^3}{p} \\
&= \mp \frac{1}{2} \frac{c}{\rho} \pm\frac{c}{\rho} \pm \frac{\gamma}{2}\frac{c}{\rho} \\
&= \pm \frac{1}{2} (\gamma + 1) \frac{c}{\rho} \neq 0.
\end{alignat}

The Roe linearization for the full polytropic Euler equations has the advantage that it is simply the matrices $f^x_U$ and $f^y_U$ evaluated at some appropriately averaged state $\Ub$.  It takes the form (see \cite{tran})
\begin{alignat}{1}
&\hat{A}(U_-,U_+;\theta) := A(\Ub;\theta) = \\
 &\left(\begin{array}{cccc} 0 & \sin \theta & -\cos \theta & 0 \\ \sin \theta \frac{\gamma-1}{2}(\overline{u}^2+\overline{v}^2) - \overline{u}\overline{N} & \overline{N}+(2-\gamma) \overline{u}\sin \theta & -\overline{u}\cos \theta - (\gamma-1)\overline{v}\sin \theta & \sin\theta(\gamma-1) \\ -\cos\theta \frac{\gamma-1}{2}(\overline{u}^2+\overline{v}^2) - \overline{v}\overline{N} & \overline{v}\sin \theta + (\gamma-1)\overline{u}\cos \theta & \overline{N}  +(\gamma-2)\overline{v}\cos \theta & -\cos \theta(\gamma-1) \\ \left(\frac{\gamma-1}{2}(\overline{u}^2+\overline{v}^2) - \overline{h}\right)\overline{N} & \overline{h} \sin \theta - (\gamma-1)\overline{N}\overline{u} & -\overline{h}\cos \theta -(\gamma-1)\overline{N}\overline{v} & \gamma \overline{N} \end{array}\right). \notag
\end{alignat}
\vspace{1pc}
We define
\begin{alignat}{1}
h := e + \frac{1}{2}(u^2+v^2) + \frac{p}{\rho} = \frac{\gamma}{\gamma-1}\frac{p}{\rho}+\frac{1}{2}(u^2+v^2).
\end{alignat}
to be the total specific enthalpy per unit mass, so that
\begin{alignat}{1}
c^2 = (\gamma-1)\left(h-\frac{u^2+v^2}{2}\right)
\end{alignat}
is the sound speed.  The averaged quantities are defined as
\begin{alignat}{1}
\overline{u} &= \frac{u_-\sqrt{\rho_-}+u_+\sqrt{\rho_+}}{\sqrt{\rho_-}+\sqrt{\rho_+}},\\
\overline{v} &= \frac{v_-\sqrt{\rho_-}+v_+\sqrt{\rho_+}}{\sqrt{\rho_-}+\sqrt{\rho_+}}, \\
\overline{N} &= \overline{u} \sin \theta - \overline{v} \cos \theta, \\
\overline{h} &= \frac{h_- \sqrt{\rho_-}+h_+\sqrt{\rho_+}}{\sqrt{\rho_-}+\sqrt{\rho_+}}, \\
\overline{c}^2 &= (\gamma-1)\left(\overline{h}-\frac{\overline{u}^2+\overline{v}^2}{2}\right).
\end{alignat}
The eigenvalues of $A(\Ub;\theta)$ are
\begin{alignat}{1}
\overline{N} \pm \overline{c}, \overline{N}, \overline{N},
\end{alignat}
and it has a full basis of eigenvectors.  Moreover, it is clear that $A(\Ub;\theta)$ is a smooth function of $U_\pm$, the eigenvalues are smooth functions of $U_\pm$, (away from $\rho = 0$ of course), and by direct inspection of the eigenvectors (they are not needed here, but expressions for them are available) they too are smooth functions of $U_\pm$.  
Define the left and right eigenvectors so that
\begin{alignat}{1}
A(\Ub;\theta)r^\pm(\Ub;\theta) &= (\overline{N}\pm\overline{c})r^\pm(\Ub;\theta), \\
 l^\pm(\Ub;\theta)A(\Ub;\theta) &= (\overline{N}\pm \overline{c})l^\pm(\Ub;\theta), \\
A(\Ub;\theta)r^i(\Ub;\theta) &= \overline{N}\, r^i(\Ub;\theta) \mbox{ for } i = 1,2, \\
 l^i(\Ub;\theta)A(\Ub;\theta) &= \overline{N}\, l^i(\Ub;\theta), \mbox{ for } i = 1,2. \\
l^\alpha(\Ub;\theta) r^\alpha(\Ub;\theta) &= \delta_{\alpha\beta}, \quad \alpha, \beta = +, -, 1, 2.
\end{alignat}

\begin{theorem} \mylabel{ulip}
For any $\theta_0 \in \mathcal{R}_F \cup \mathcal{R}_B$, there is a neighborhood containing $\theta_0$ such that $U$ satisfies a Lipschitz condition based at $\theta_0$ for all $\theta$ in this neighborhood.  The Lipschitz constant is uniform for all such $\theta_0$ and is independent of $U$.  That is,
\begin{alignat}{1}
\Big|U(\theta)-U(\theta_0)\Big| \leq M' |\theta-\theta_0|, \forall \theta \mbox{ sufficiently close to } \theta_0.
\end{alignat}
\end{theorem}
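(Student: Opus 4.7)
The plan is to decompose $U(\theta) - U(\theta_0)$ in the eigenbasis of the Roe-averaged matrix $\hat{A} := \hat{A}\bigl(U(\theta_0), U(\theta); \theta_0\bigr)$ and to estimate each coefficient separately. Applying \eqref{Adiff} with $\theta_1 = \theta_0$ and $\theta_2 = \theta$ gives
\begin{alignat}{1}
\hat{A}\bigl(U(\theta) - U(\theta_0)\bigr) = \bO(|\theta - \theta_0|).
\end{alignat}
Since $\theta_0 \in \mathcal{R}_F$, $U$ is continuous at $\theta_0$ with $N(\theta_0) = c(\theta_0)$, so $\Ub \to U(\theta_0)$ as $\theta \to \theta_0$ and the eigenvalues $\overline{N} + \overline{c}$, $\overline{N}$, $\overline{N}$, $\overline{N} - \overline{c}$ of $\hat{A}$ converge to $2c(\theta_0)$, $c(\theta_0)$, $c(\theta_0)$, $0$. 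Writing $U(\theta) - U(\theta_0) = \sum_\alpha a_\alpha r^\alpha(\Ub;\theta_0)$ and contracting with the left eigenvectors yields $|\lambda^\alpha a_\alpha| \leq C|\theta-\theta_0|$. For $\alpha \in \{+,1,2\}$ the eigenvalue $\lambda^\alpha$ stays uniformly bounded below by a constant depending only on the phase-space lower bound $c \geq c_{\min}$, producing $|a_\alpha| \leq C|\theta-\theta_0|$ with $C$ independent of $U$ and $\theta_0$.

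The degenerate direction $\alpha = -$ is the main obstacle, since $\lambda^- \to 0$ and no information about $a_-$ can be extracted from \eqref{Adiff} alone. To handle it I would use the scalar Lipschitz bound from the lemma immediately preceding the theorem. Setting $\Phi(U) := u\sin\theta_0 - v\cos\theta_0 - c(U)$, that lemma provides $|\Phi(U(\theta))| \leq M|\theta - \theta_0|$ and $\Phi(U(\theta_0)) = 0$. Taylor expansion about $U(\theta_0)$ gives
\begin{alignat}{1}
\Phi_U\big|_{U(\theta_0)} \cdot \bigl(U(\theta) - U(\theta_0)\bigr) = \bO(|\theta - \theta_0|) + \bO\bigl(|U(\theta) - U(\theta_0)|^2\bigr).
\end{alignat}
Expanding the left side in the eigenbasis and using the genuine-nonlinearity computation $(N - c)_U \cdot r^- = -\tfrac{1}{2}(\gamma+1)c/\rho \neq 0$ carried out in the excerpt (which, by smoothness of the Roe eigenstructure in $\Ub$, remains uniformly bounded away from zero in a neighborhood), the coefficients of $a_+, a_1, a_2$ can be absorbed using the bounds from the previous paragraph while the coefficient of $a_-$ stays uniformly nonzero, giving
\begin{alignat}{1}
|a_-| \leq C\Bigl(|\theta - \theta_0| + |U(\theta) - U(\theta_0)|^2\Bigr).
\end{alignat}

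Summing over $\alpha$ yields $|U(\theta) - U(\theta_0)| \leq C\bigl(|\theta - \theta_0| + |U(\theta) - U(\theta_0)|^2\bigr)$ with $C$ independent of $U$. By continuity of $U$ at $\theta_0 \in \mathcal{R}_F$, for $\theta$ sufficiently close to $\theta_0$ the quantity $|U(\theta) - U(\theta_0)|$ is small enough that the quadratic term is absorbed into the left side, producing the desired linear bound $|U(\theta) - U(\theta_0)| \leq M'|\theta - \theta_0|$. The case $\theta_0 \in \mathcal{R}_B$ is identical after swapping the roles of $r^+$ and $r^-$. The crux of the argument, and the step I expect to require the most care, is this $r^-$ direction: the combination of the genuine-nonlinearity calculation and the scalar Lipschitz estimate from the preceding lemma is precisely what compensates for the vanishing eigenvalue of $\hat{A}$, and verifying that all constants involved in the bootstrap depend only on the equation of state and the phase-space bounds (not on $U$ or $\theta_0$) is the remaining bookkeeping.
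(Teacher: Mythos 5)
Your proposal is correct and follows essentially the same route as the paper's proof: you control the non-degenerate components by contracting the Roe-averaged jump relation with $l^+,l^1,l^2$ (whose eigenvalues stay away from zero near $\mathcal{R}_F$), and you recover the degenerate $r^-$ component from the preceding lemma's Lipschitz estimate on $u\sin\theta_0 - v\cos\theta_0 - c$ combined with genuine nonlinearity $(N-c)_U\, r^- \neq 0$. The only difference is in packaging: the paper assembles these same ingredients into a local diffeomorphism $W \mapsto \big(\Phi(W),\, l^+(W-U(\theta_0)),\, l^1(W-U(\theta_0)),\, l^2(W-U(\theta_0))\big)$ with uniformly Lipschitz inverse, whereas you perform the equivalent first-order Taylor expansion and absorb the quadratic remainder using continuity of $U$ at $\theta_0$, which yields the same uniform constant.
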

\begin{proof}
Suppose $\theta_0 \in \mathcal{R}_F$.  \\
Recall that
\begin{alignat}{1}
\hat{A}\big(U(\theta_0),U(\theta);\theta_0\big)\big(U(\theta)-U(\theta_0)\big) = \bO(|\theta-\theta_0|).
\end{alignat}
Denote $l^+\big(U(\theta_0),U(\theta);\theta_0\big) = l^+(\Ub;\theta_0)$ where the average is taken between $U(\theta_0)$ and $U(\theta)$.  Left multiply by $l^+\big(U(\theta_0),U(\theta);\theta_0\big)$ to obtain
\begin{alignat}{1}
\big(\overline{N}+\overline{c}\big) l^+\big(U(\theta_0),U(\theta);\theta_0\big) \big(U(\theta)-U(\theta_0)\big) = \bO(|\theta-\theta_0|),
\end{alignat}
where the averages are taken between $U(\theta_0)$ and $U(\theta)$.  Since $\theta_0 \in \mathcal{R}_F$, $U$ is continuous at $\theta_0$.  Therefore for $\theta$ sufficiently close to $\theta_0$, $\overline{N}+\overline{c}$ is uniformly bounded away from zero, since $\overline{N}-\overline{c}$ will be approaching zero.  Therefore,
\begin{alignat}{1}
\theta \mapsto l^+\big(U(\theta_0),U(\theta_1);\theta_0\big)\big(U(\theta)-U(\theta_0)\big),
\end{alignat}
satisfies a Lipschitz estimate based at $\theta_0$ for $\theta$ sufficiently close to $\theta_0$, with Lipschitz constant uniformly bounded above by the bounds on the phase space, and proportional $c_\textrm{min}^{-1}$.  Similarly,
\begin{alignat}{1}
\theta \mapsto l^i\big(U(\theta_0),U(\theta);\theta_0\big)\big(U(\theta)-U(\theta_0)\big), i = 1,2,
\end{alignat}  
also satisfies a Lipschitz estimate based at $\theta_0$ for $\theta$ sufficiently close to $\theta_0$, with a similar upper bound on the Lipschitz constant.  We claim that
\begin{alignat}{1}
W \mapsto g(W)  = \left(\begin{array}{c} g^1(W) \\ g^2(W) \\ g^3(W) \\ g^4(W) \end{array} \right):= \left( \begin{array}{c} \dfrac{W_2}{W_1} \sin \theta_0 - \dfrac{W_3}{W_1} \cos \theta_0 - c(W) \\ l^+\big(U(\theta_0),W ; \theta_0\big)\big(W-U(\theta_0)\big) \\ l^1\big(U(\theta_0),W;\theta_0\big)\big(W-U(\theta_0)\big) \\ l^2\big(U(\theta_0),W;\theta_0\big)\big(W-U(\theta_0)\big) \end{array} \right)
\end{alignat}
defines a diffeomorphism for $W$ sufficiently close to $U(\theta_0)$, when combined with the previous lemma will prove the claim.  Notice that
\begin{alignat}{1}
g_W\big(U(\theta_0)\big) = \left( \begin{array}{c} (u \sin \theta_0 - v \cos \theta_0 - c)_U \big|_{U(\theta_0)} \\ l^+\big(U(\theta_0);\theta_0\big) \\ l^1\big(U(\theta_0);\theta_0\big) \\l^2\big(U(\theta_0);\theta_0\big) \end{array} \right).
\end{alignat}
Then, if
\begin{alignat}{1}
g^i_W\big(U(\theta_0)\big)z = 0, i = 2,3,4,
\end{alignat}
 this implies $z \parallel r^-\big(U(\theta_0);\theta_0)$.  But since
\begin{alignat}{1}
\Big((u \sin \theta_0 - v \cos \theta_0 -c)_U r^-(U;\theta_0)\Big)\Big|_{U(\theta_0)} \neq 0,
\end{alignat}
(by genuine nonlinearity) this implies $z=0$.  Therefore, for $W$ sufficiently close to $U(\theta_0)$, $g$ is a diffeomorphism.  Since $U(\theta)$ approaches $U(\theta_0)$ as $\theta \rightarrow \theta_0$ by continuity, and $g\big(U(\theta)\big)$ satisfies the Lipschitz estimate based at $\theta_0$ for $\theta$ sufficiently close to $\theta_0$, $U$ itself must satisfy a Lipschitz estimate based at $\theta_0$ for $\theta$ sufficiently close to $\theta_0$.  The $C^1$ norm of $g$ is bounded uniformly above by the phase space bounds and equation of state, as is the Lipschitz constant for $g\big(U(\theta)\big)$,  and so the Lipschitz constant for $U$ is as well.
\end{proof}
\begin{theorem} \mylabel{lipschitz}
Assuming that there are no stagnation points, and that density and internal energy remain bounded away from zero, any $L^\infty$ weak, steady, self similar solution to the 2-d full polytropic Euler equations must be of bounded variation.  Moreover, $U$ can be decomposed as
\begin{alignat}{1}
U = U_L + U_S,
\end{alignat}
where $U_L$ is Lipschitz with constant independent of $U$, and $U_S$ is a saltus function of bounded variation, with total variation independent of $U$.  (Note these constants will depend on the equation of state, the lower bound on density and internal energy, and the $L^\infty$ norm of $U$.)  Note that this implies $U$ is a special function of bounded variation, since the Cantor part vanishes.  Moreover, the absolutely continuous part is in fact Lipschitz.
\end{theorem}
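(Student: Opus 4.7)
The plan is to set $U_L := U - U_S$ with $U_S$ the saltus function built in the previous lemma, and prove that $U_L$ is globally Lipschitz with a constant independent of $U$. Combined with the uniformly bounded total variation of $U_S$, this yields the $BV$ bound, the explicit decomposition, and the $SBV$ statement all at once, and since the Lipschitz part is absolutely continuous it also takes care of the final sentence.

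Step 1 is to verify that $U_L$ is continuous on $[0, 2\pi[$. By construction, at every jump point $\phi \in \mathcal{S}_C \cup \mathcal{S}_F \cup \mathcal{S}_B$, $U_S$ has a jump of size exactly $U(\phi+)-U(\phi-)$, cancelling the jump of $U$; at all other points $U$ itself is continuous, and $U_S$ is locally constant there by discreteness of jump points.

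Step 2 is to obtain, with some $M$ independent of $U$, a local $M$-Lipschitz bound on $U_L$ at every $\theta_0 \in [0, 2\pi[$, by case analysis on the partition $\mathcal{C} \cup \mathcal{S}_F \cup \mathcal{S}_B \cup \mathcal{S}_C \cup \mathcal{R}_F \cup \mathcal{R}_B$. If $\theta_0 \in \mathcal{C}$, then $U$ is constant on some $]\kappa^-(\theta_0), \kappa^+(\theta_0)[$ containing no jump points, so $U_L$ is constant there. If $\theta_0 \in \mathcal{S}_F \cup \mathcal{S}_B \cup \mathcal{S}_C$, Theorems \myref{newthm2} and \myref{newthm3} give $U$ constant on each side of $\theta_0$ inside $]\sigma^-(\theta_0), \sigma^+(\theta_0)[$; continuity of $U_L$ forces those two values to coincide, so $U_L$ is constant on the whole interval. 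If $\theta_0 \in \mathcal{R}_F \cup \mathcal{R}_B$, Theorem \myref{ulip} gives a uniform Lipschitz bound on $U$ in a neighborhood of $\theta_0$, and Lemma \myref{uslip} gives one on $U_S$; subtracting produces the desired bound on $U_L$.

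Step 3 is a continuation argument to upgrade local to global: for $\theta_1 < \theta_2$ set $T := \sup\{t \in [\theta_1, \theta_2] : |U_L(s) - U_L(\theta_1)| \leq M(s - \theta_1) \text{ for all } s \in [\theta_1, t]\}$; continuity of $U_L$ shows $T$ is attained, and applying the local Lipschitz property at $T$ rules out $T < \theta_2$, giving $|U_L(\theta_2)-U_L(\theta_1)| \leq M|\theta_2-\theta_1|$. The main difficulty, and the reason the earlier work was needed, is controlling what happens at sonic points in $\mathcal{R}_F \cup \mathcal{R}_B$, where by Lemma \myref{limpoints} shocks may accumulate. The whole argument hinges on the uniformity (independent of $U$) of the Lipschitz constants in Theorem \myref{ulip} and Lemma \myref{uslip}: without it, the local-to-global patching would see the constant degenerate across clusters of accumulating shocks.
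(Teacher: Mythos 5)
Your proposal is correct and follows essentially the paper's own proof: the same decomposition $U_L := U - U_S$, the same three-case local analysis (constant neighborhoods at points of $\mathcal{C}$ and at jump points via Theorems \ref{newthm2} and \ref{newthm3}, and Lemma \ref{uslip} together with Theorem \ref{ulip} at points of $\mathcal{R}_F \cup \mathcal{R}_B$, where the uniformity of the constants is indeed the crux), with only the final local-to-global step done by a continuation argument instead of the paper's finite-subcover chaining --- an inessential difference. The one small slip is in your Step~1: near points of $\mathcal{R}_F \cup \mathcal{R}_B$ the jump set can accumulate (Lemma \ref{limpoints}), so $U_S$ need not be locally constant there; continuity of $U_L$ at such points instead follows from your Step~2 estimate, so the argument still goes through.
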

\begin{proof}
The statement about $U_S$ has been covered in previous lemmas.  We claim that for any $\theta_0$, $U_L$ satisfies a Lipschitz estimate based at $\theta_0$ for $\theta$ sufficiently close to $\theta_0$.  \\
\\
If $\theta_0 \in \mathcal{C}$, then $U$ is constant on a neighborhood containing $\theta_0$, and since there are no shocks or contacts it is clear that $U_L := U-U_S$ is constant and thus satisfies a Lipschitz estimate based at $\theta_0$ with constant 0. \\
\\
If $\theta_0 \in \mathcal{S}_F \cup \mathcal{S}_B \cup \mathcal{S}_C$, then the jump at $\theta_0$ is accounted for in $U_S$, and so $U_L$ is constant on some neighborhood containing $\theta_0$, and so satisfies a Lipschitz estimate based at $\theta_0$ with constant 0.\\
\\
If $\theta_0 \in \mathcal{R}_F \cup \mathcal{R}_B$, then, for $\theta$ sufficiently close to $\theta_0$, we have from Lemma \myref{uslip} and Theorem \myref{ulip} that
\begin{alignat}{1}
|U_L(\theta)-U_L(\theta_0)| &\leq |U(\theta)-U(\theta_0)| + |U_S(\theta)-U_S(\theta_0)| \\
&\leq C_S|\theta-\theta_0| + M'|\theta-\theta_0| := C_L |\theta-\theta_0|.
\end{alignat}
Pick any $\theta_1$ and $\theta_2$.  Consider the open cover
\begin{alignat}{1}
\bigcup_{\phi \in [\theta_1,\theta_2]} \Omega(\phi),
\end{alignat}
where for any $\theta$, $\Omega(\theta)$ is the neighborhood for which we have a Lipschitz estimate based at $\theta$ with Lipschitz constant uniformly bounded by $C_L$.  This has a finite subcover
\begin{alignat}{1}
\bigcup_{n=1}^N \Omega(\phi_n).
\end{alignat}
Then, adding in $\Omega(\theta_1)$ and $\Omega(\theta_2)$, we can express
\begin{alignat}{1}
|U(\theta_2)-U(\theta_1)| &\leq |U(\eta_1) - U(\theta_1)| + |U(\phi_1)-U(\eta_1)| + |U(\eta_2)-U(\phi_1)|  \\
& |U(\phi_2)-U(\eta_2)| + ... + |U(\theta_2)-U(\eta_{N+1})| \\
\leq C_L|\theta_2-\theta_1|,
\end{alignat}
where $\phi_{i-1} < \eta_i < \phi_i$, and $\eta_i \in \Omega(\phi_{i-1}) \cap \Omega(\phi_i)$ for $i=2,..,N$.  (We take $\theta_1 < \eta_1 < \phi_1, \eta_1 \in \Omega(\theta_1) \cap \Omega(\phi_1)$, and $\phi_N < \eta_{N+1} < \theta_2,$ with $\eta_{N+1} \in \Omega(\phi_N) \cap \Omega(\theta_2)$.)  Thus $U_L$ is Lipschitz on all of $[0,2\pi)$, and the rest follows.
\end{proof}

\section{Structure of flows}
We now prove some results about the structure of possible solutions, and present several examples.  \\

We begin by decomposing the domain into a finite number of sectors.  Denote the points in $\mathcal{S}_C$ as $\theta_1, \theta_2,...,\theta_N$ so that

\begin{alignat}{1}
0 \leq \theta_1 < \theta_2 < ...< \theta_N < 2\pi.
\end{alignat}

(Recall that $\theta \in \mathcal{S}_C$ means that $N(\theta)=0$, and $\mathcal{S}_C$ is a finite set by Lemma \myref{finitecontact}.)  Define the sectors $I_i$, for $i=1,..,N$, as
\begin{alignat}{1}
I_i := [\theta_i, \theta_{i+1}],
\end{alignat}
(taking $\theta_{N+1} = \theta_1$ to unify the notation).\\

We say that $I_i$ is a \emph{forward sector} if $N|_{I_i} \geq 0$, and that $I_i$ is a \emph{backward sector} if $N|_{I_i} \leq 0$.  By construction, each sector will either be forward or backward, and $N$ will be positive on the interior of a forward sector, and negative on the interior of a backward sector.  Moreover, $L$ is continuous on the interior of each sector, since $L$ is continuous at shocks and there are no contacts in the interior of a sector.\\

For all the figures in the remainder of the paper, the flow direction is what is indicated.  The length of the arrows is not meant to suggest anything about the length of the velocity vectors.

\begin{figure}[h]
  \centering
  \def\svgwidth{250pt}
  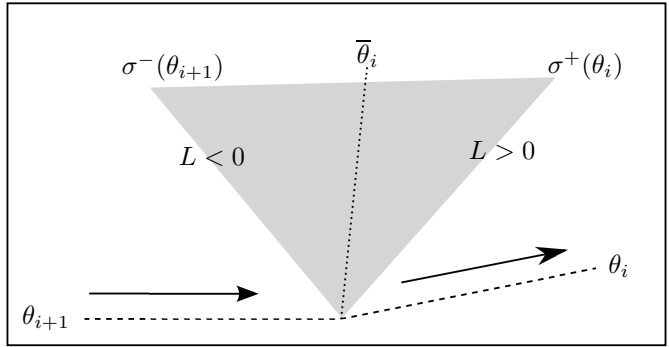
  \caption{In a forward sector $I_i = [\theta_i, \theta_{i+1}]$, $L(\theta_i+) > 0$ and $L(\theta_{i+1}-) < 0$.  $L$ is monotonically decreasing on $]\theta_i, \theta_{i+1}[$, and equal to zero at a unique $\overline{\theta}_i$.  The flow is constant on $]\theta_i, \sigma^+(\theta_i)[$ and $]\sigma^-(\theta_{i+1}), \theta_{i+1}[$.  If $\theta_{i+1} \neq \theta_i + \pi$, then there must be some wave structure in the grey shaded region.}
   \label{tangpropfig}
\end{figure}

\begin{lemma} \mylabel{tangprop}
(See Figure \ref{tangpropfig}.) Suppose $I_i$ is a forward sector.  Then, $L(\theta_{i}+) > 0$, $L(\theta_{i+1}-) < 0$, and $L$ is monotonically decreasing on $]\theta_i, \theta_{i+1}[$.  Similarly, if $I_i$ is a backward sector then $L(\theta_i+) < 0$, $L(\theta_{i+1}-) > 0$, and $L$ is monotonically increasing on $]\theta_i, \theta_{i+1}[$.  Therefore, there exists a unique $\overline{\theta}_i \in \, ]\theta_i,\theta_{i+1}[$ such that $L(\overline{\theta}_i) = 0$.  
\end{lemma}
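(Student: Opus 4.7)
\medskip

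The plan is to extract from the distributional equations the pointwise identity $L_\theta = -N$ on the interior of any sector, and then to read off the boundary signs of $L$ directly from the constant-neighborhood structure at contacts. Taking the two weak equations $(\rho N)_\theta = \rho L$ and $(\rho N L)_\theta = \rho L^2 - \rho N^2$ and expanding the second by the classical Leibniz rule in a portion of $I_i$ where $U$ is Lipschitz (Theorem~\myref{lipschitz} guarantees such portions make up the whole of $I_i$ minus a countable shock set), we get
\begin{alignat}{1}
(\rho N L)_\theta = (\rho N)_\theta L + \rho N L_\theta = \rho L^2 + \rho N L_\theta,
\end{alignat}
so comparison with $\rho L^2 - \rho N^2$ yields $\rho N L_\theta = -\rho N^2$. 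Since $I_i$ is a forward sector, $N>0$ on its interior (otherwise the zero would belong to $\mathcal{S}_C$, contradicting that $\theta_i$ and $\theta_{i+1}$ are consecutive contacts), and $\rho$ is bounded below, so we may divide to obtain $L_\theta = -N < 0$ almost everywhere on $]\theta_i,\theta_{i+1}[$.

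Next I would upgrade this a.e.\ inequality to genuine monotonicity of $L$. On each smooth piece between consecutive shocks, $L$ is Lipschitz with $L_\theta=-N<0$, hence strictly decreasing. At any shock in $]\theta_i,\theta_{i+1}[$ we have $\mathcal{M}\neq 0$, so the tangential jump condition $[L]=0$ (established immediately after \eqref{pressurejump}) gives continuity of $L$ through the shock. Gluing these pieces, $L$ is continuous on $]\theta_i,\theta_{i+1}[$ and strictly decreasing.

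To pin down the endpoint signs I would invoke Theorem~\myref{newthm3}: under the no-stagnation hypothesis only case (a) applies at a contact, so there is a constant neighborhood $]\theta_i,\sigma^+(\theta_i)[$ on which $U\equiv U^+=(\rho^+,u^+,v^+,\dots)$. On this neighborhood $L(\theta)=u^+\cos\theta + v^+\sin\theta$ and $N(\theta)=u^+\sin\theta-v^+\cos\theta$. Continuity of $N$ at the contact forces $u^+\sin\theta_i - v^+\cos\theta_i = 0$; if in addition $L(\theta_i+)=u^+\cos\theta_i+v^+\sin\theta_i=0$, the resulting $2\times 2$ linear system in $(u^+,v^+)$ (with determinant $\sin^2\theta_i+\cos^2\theta_i=1$) forces $u^+=v^+=0$, contradicting no stagnation. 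Combined with the forward-sector sign $N\geq 0$ just past $\theta_i$ (equivalently $L(\theta_i+)=N_\theta(\theta_i+)\geq 0$ in the constant region), this gives $L(\theta_i+)>0$. The same argument on the one-sided constant neighborhood $]\sigma^-(\theta_{i+1}),\theta_{i+1}[$ yields $L(\theta_{i+1}-)<0$. The unique $\overline{\theta}_i$ with $L(\overline{\theta}_i)=0$ then follows by the intermediate value theorem and strict monotonicity. Backward sectors are handled by the same scheme with signs reversed.

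The only real technical point will be the Leibniz expansion of $(\rho N L)_\theta$: in the pure $L^\infty$/BV setting this would be delicate, but Theorem~\myref{lipschitz} has already given the decomposition $U=U_L+U_S$ with $U_L$ Lipschitz, so off the countable shock set all of $\rho$, $N$, $L$ are Lipschitz and the classical product rule applies pointwise a.e.; continuity of $L$ at the shocks (via $[L]=0$) is then what allows the a.e.\ derivative identity to control the behaviour of $L$ across the entire sector.
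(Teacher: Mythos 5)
Your proposal is correct and follows essentially the same route as the paper: deriving $L_\theta=-N$ from the mass and tangential-momentum equations, using $[L]=0$ at shocks together with the Lipschitz-plus-saltus decomposition of Theorem~\myref{lipschitz} to propagate monotonicity across the whole sector interior, and reading the endpoint signs from the constant neighborhoods at the contacts combined with the no-stagnation hypothesis. The only cosmetic difference is your $2\times 2$ determinant argument for $L(\theta_i+)\neq 0$, which is the same in substance as the paper's observation that $N(\theta_i+)=0$ and $\vec u\neq 0$ force $L(\theta_i+)\neq 0$.
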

\begin{proof}
Consider the strong form of the conservation of mass and tangential momentum equations,
\begin{alignat}{1}
(\rho N)_\theta &= \rho L,\\
(\rho L N)_\theta &= \rho L^2 - \rho N^2.
\end{alignat}
Manipulating these, we obtain for any point of differentiability on the interior of $I_i$ that
\begin{alignat}{1}
L_\theta \rho N + L (\rho N)_\theta &= \rho L^2 - \rho N^2, \\
L_\theta \rho N + \rho L^2 &= \rho L^2 - \rho N^2 \\
L_\theta &= -N, \myeqlabel{tangderiv}
\end{alignat}
since $\rho$ is bounded away from zero by assumption, and $N\neq 0$ on the interior of $I_i$.  \\

Theorem \myref{lipschitz} shows that $U$ is Lipschitz almost everywhere (since $U_S$ is constant except on at most a countable, discrete set), and since jumps in $U_S$ on the interior of $I_i$ must be shocks (not contacts), $L_S$ is constant on the interior of $I_i$.  Therefore, $L$ is Lipschitz (hence differentiable almost everywhere) on the interior of $I_i$, and so the fundamental theorem of calculus can be applied to $L$.  Therefore \eqref{tangderiv} shows that $L$ is monotonically decreasing (increasing) on $]\theta_i, \theta_{i+1}[$ if $I_i$ is a forward (backward) sector.\\

Recall that when $U$ is constant, $N_\theta = L$.  Also recall that by Theorem \myref{newthm3} there exist $\sigma^+(\theta_i) > \theta_i$ and $\sigma^-(\theta_{i+1})< \theta_{i+1}$ such that $U$ is constant on $]\theta_i, \sigma^+(\theta_i)[$ and on $]\sigma^-(\theta_{i+1}), \theta_{i+1}[$.  Therefore, the following right limits are defined and we have for small $\delta > 0$ that
\begin{alignat}{1}
N(\theta_i + \delta) - N(\theta_i+) = N(\theta_i+\delta) =  \int_{\theta_i+}^{\theta_i+\delta} L(\eta) d\eta.
\end{alignat} 
Since $U$ is constant on $]\theta_i, \theta_i+\delta[$ and there are no stagnation points $0 \neq |\vec{u}(\theta_i +)| = |N(\theta_i+)| + |L(\theta_i+)| = |L(\theta_i+)|$.  Therefore, by continuity, $\sgn(L)$ is constant on $]\theta_i, \theta_{i}+\delta[$, and so $\sgn\big(N(\theta_i+\delta)\big) = \sgn\big(L(\theta_i+)\big)$.  Therefore, for a forward sector, $L(\theta_i+) > 0$.  Similar arguments work for $L(\theta_{i+1}-)$ and for backward sectors.  \\
Since $L(\theta_i+)$ and $L(\theta_{i+1}-)$ must have opposite signs, and $L$ is monotonic on $]\theta_i, \theta_{i+1}[$, there is a unique $\overline{\theta}_i$ such that $L(\overline{\theta}_i) = 0$.
\end{proof}

Now we define a \emph{Prandtl-Meyer wave}.  A \emph{forward Prandtl-Meyer wave} is a closed interval $[\alpha, \beta]$ such that $N(\theta) = c(\theta)$ for all $\theta \in [\alpha, \beta]$.  A \emph{backward Prandtl-Meyer wave} is the same except that $N(\theta) = -c(\theta)$.  Moreover, $U$ is differentiable almost everywhere on $]\alpha, \beta[$, and $U_\theta$ is in the kernel of $A(U;\theta)$ when it is defined. This follows from the the strong form of the Euler equations
\begin{alignat}{1}
A(U;\theta)U_\theta = 0.
\end{alignat}

It is well known that $p = A(S) \rho^{\gamma}$ for a polytropic gas, where $A(S)$ is a function of the entropy, and that away from discontinuities $S$ is constant.  Therefore, for $\theta$ in the interior of a forward Prandtl-meyer wave we have that
\begin{alignat}{1}
(\rho N)_\theta = (\rho c)_\theta &= \sqrt{A(S)\gamma} \big(\rho^{\frac{\gamma+1}{2}} \big)_\theta \\
&= \sqrt{A(s)\gamma} \frac{\gamma+1}{2} \rho^{\frac{\gamma-1}{2}} \rho_\theta = \rho L,
\end{alignat}
and so
\begin{alignat}{1}
\sgn(\rho_\theta) = \sgn(L).
\end{alignat}
Therefore, for a forward Prandtl-Meyer wave, as the gas particles pass through it (corresponding to the decreasing $\theta$ direction by our choice of coordinates), $\rho$ increases if $L$ is negative, and decreases if $L$ is positive.  Since the flow is isentropic,
\begin{alignat}{1}
\sgn(p_\theta) = \sgn(\rho_\theta) = \sgn(L).
\end{alignat}
Similar calculations for backward waves can be done.  Therefore, in light of Lemma \myref{tangprop}, we have the classifications:
\begin{itemize}
\item forward expansion wave: $\alpha < \beta \leq \overline{\theta_i}$, $N(\theta)=c(\theta), L(\theta) \geq 0$ for all $\theta \in [\alpha, \beta]$,
\item forward compression wave: $ \overline{\theta_i} \leq \alpha < \beta $, $N(\theta)=c(\theta), L(\theta) \leq 0$ for all $\theta \in [\alpha, \beta]$,
\item backward expansion wave: $ \overline{\theta}_i \leq \alpha < \beta$, $N(\theta)=-c(\theta), L(\theta) \geq 0$ for all $\theta \in [\alpha, \beta]$,
\item backward compression wave: $ \alpha < \beta \leq \overline{\theta}_i $, $N(\theta)=-c(\theta), L(\theta) \leq 0$ for all $\theta \in [\alpha, \beta]$.
\end{itemize}
It is possible to join a forward compression wave between $[\alpha_1, \beta_1]$ to a forward expansion wave between $[\alpha_0, \beta_0]$ if $\beta_0 = \overline{\theta}_i = \alpha_1$.  In that case the compression wave ends when the flow is precisely sonic at $\overline{\theta}_i$, and an expansion wave immediately starts at $\overline{\theta}_i$.  We have the following theorem.

\begin{figure}[h] 
  \centering
  \def\svgwidth{250pt}
  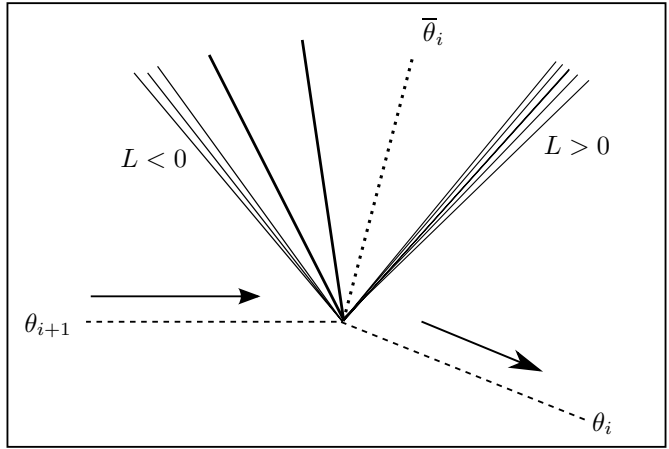
  \caption{In a forward sector $I_i = [\theta_i, \theta_{i+1}]$, $L > 0$ on $]\theta_i, \overline{\theta}_i[$, and $L < 0$ on $]\overline{\theta}_i, \theta_{i+1}[$.  The flow is constant on $]\theta_i, \sigma^+(\theta_i)[$ and $]\sigma^-(\theta_{i+1}), \theta_{i+1}[$.  There is at most one shock or rarefaction in $[\sigma^+(\theta_i),\overline{\theta}_i]$, and possibly infinitely many shocks and compression waves in $]\overline{\theta_i}, \sigma^-(\theta_{i+1})]$.  However, there cannot be consecutive compression waves.  In this particular example, the flow consists of a compression wave and two shocks in the $L<0$ part, and a rarefaction wave in the $L>0$ part.}
  \mylabel{wavespic}
\end{figure}

\begin{theorem} \mylabel{onewave}
(See Figure \myref{wavespic}.) Suppose $I_i$ is a forward sector, and that $U$ is continuous on an open interval  $B \subset ]\overline{\theta}_i, \theta_{i+1}[$. (In this case $L < 0$ on $B$.)  Then, either $U$ is constant on this open interval or constant on either side of a single forward compression wave. \\

Similarly, in a backward sector, on an open interval $B \in ]\theta_{i}, \overline{\theta}_i[$ on which $U$ is continuous, $U$ must be constant or constant on either side of a single backward compression wave.
\end{theorem}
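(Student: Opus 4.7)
The plan is to isolate the sonic set within $B$ and show that its complement consists of constant states while the sonic set itself is forced to be a single closed interval. Define
\[ K := \{\theta \in B : N(\theta) = c(\theta)\}. \]
Continuity of $U$ on $B$ makes $K$ closed in $B$. Because $B$ lies strictly inside the forward sector $I_i$, we have $N > 0$ throughout $B$, so both $N \neq 0$ and $N \neq -c$ on all of $B$. Thus on the open set $B \setminus K$ the hypotheses of Theorem \ref{newthm1} are satisfied on each connected component, hence $U$ is locally constant there; continuity of $U$ on $B$ then upgrades this to actual constancy on each connected component of $B \setminus K$.

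The heart of the argument is to show that $K$ is a (possibly degenerate) closed interval. If $K$ is empty, then $B$ is itself the only component of $B \setminus K$ and $U$ is constant on $B$, giving the first alternative. Otherwise set $\alpha := \inf K$ and $\beta := \sup K$; both lie in $K$ by closedness in $B$. Suppose toward contradiction that some $\theta_0 \in \,]\alpha,\beta[\,$ fails to lie in $K$. Then $\theta_0$ belongs to a maximal open component $]a,b[$ of $B \setminus K$, and by definition of $\alpha$ and $\beta$ both endpoints satisfy $a,b \in K \subset B$. On $]a,b[$ the solution equals a constant $U_0$, and continuity of $U$ at $a$ and $b$ extends this to $[a,b]$. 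For a constant state,
\[ N_\theta = \tfrac{d}{d\theta}(u_0 \sin\theta - v_0 \cos\theta) = L, \]
and $L < 0$ on $B$ by hypothesis, so $N$ is strictly decreasing on $[a,b]$. On the other hand the sound speed $c_0$ is constant on $[a,b]$, and the endpoint identities $N(a) = c(a) = c_0 = c(b) = N(b)$ contradict strict monotonicity.

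It follows that $K = [\alpha,\beta]$, and $B \setminus [\alpha,\beta]$ consists of at most two open intervals on each of which $U$ is constant. On $[\alpha,\beta]$ we have $N = c$ and (since $L < 0$ on all of $B$) $L \leq 0$, together with $\overline{\theta}_i \leq \alpha$, so if $\alpha < \beta$ then $[\alpha,\beta]$ is by definition a forward compression wave and the second alternative holds. If instead $\alpha = \beta$, then continuity of $U$ on $B$ forces the two one-sided constant values to agree at the single sonic point, yielding the first alternative. The backward sector statement follows by the parallel argument with $K = \{N = -c\}$: there $N < 0$ rules out $N = 0$ and $N = c$, and $L > 0$ on $]\theta_i,\overline{\theta}_i[$ makes $N$ strictly increasing across any would-be interior constant region, producing the same endpoint contradiction. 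The main obstacle is precisely the hole-elimination step: it is not topological, but depends essentially on the strict sign of $L$ together with the identity $N_\theta = L$ valid on constant pieces, which together prevent two distinct compression intervals from being separated by a constant sub-interval.
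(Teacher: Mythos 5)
Your argument is essentially the paper's own proof: you take the sonic set $K=\{N=c\}$ closed in $B$ by continuity, use Theorem \ref{newthm1} to make $U$ constant on the complementary open intervals, and eliminate interior ``holes'' via $N_\theta=L$ with $L$ of one strict sign on a constant piece (the paper phrases this as: $N=c$ can hold at no more than one endpoint of a complementary interval), so $K$ is a single closed interval. Two small corrections. First, in the backward sector the sign of $L$ is the opposite of what you wrote: by Lemma \ref{tangprop}, $L$ is increasing on a backward sector and vanishes at $\overline{\theta}_i$, so $L<0$ on $]\theta_i,\overline{\theta}_i[$; your monotonicity contradiction works with either strict sign, but the correct sign is what makes the sonic interval satisfy the definition of a \emph{backward compression} wave ($L\leq 0$, $\beta\leq\overline{\theta}_i$) rather than fitting no classification. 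Second, the paper's proof does not stop at ``$N=c$, $L\leq 0$, correct position'': it also invokes Theorem \ref{lipschitz} (since $B\subset\mathcal{C}\cup\mathcal{R}_F$, $U_S$ is constant on $B$, so $U$ is Lipschitz there) to get $U$ differentiable a.e.\ on the sonic interval with $U_\theta\in\ker A(U;\theta)$, which is part of what the paper means by a Prandtl--Meyer wave; you should add that one line so the interval is a compression wave in the paper's sense and not merely a sonic interval.
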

\begin{proof}
Suppose $I_i$ is a forward sector.  Since $U$ is continuous on $B$, the set on which $N(\theta)=c(\theta)$ is closed in $B$.  Therefore, its complement in $B$ is a countable union of open intervals, on which $U$ is constant by Theorem \myref{newthm1} (since $N(\theta)$ cannot be $0$ or $-c(\theta)$ in the interior of $I_i$).  Since $L(\theta)$ is negative on $B$, and $N_\theta = L$ on this complement, $N(\theta) = c(\theta)$ can be satisfied at at most one endpoint of an open interval in the complement.  Therefore at least one endpoint must be an endpoint of $B$, making $\mathcal{R}_F \cap B$ a closed interval in $B$.  \\

Theroem \myref{lipschitz} shows that $U$ is Lipschitz on $B$, since $B \subset \mathcal{C} \cup \mathcal{R}_F$ and thus $U_S$ is constant on $B$.  Therefore, $U$ is differentiable almost everywhere in $\mathcal{R}_F \cap B$ and the strong form of the equations implies $U_\theta$ is in the kernel of $A(U;\theta)$ everywhere it is defined, and so $\mathcal{R}_F \cap B$ defines a forward Prandtl-Meyer wave.  The fact that $B \subset ]\overline{\theta}_i, \theta_{i+1}[$ shows it must be a forward compression wave.  The argument for a backward sector is similar.
\end{proof}
Note that there may be multiple forward compression waves in a forward sector - this theorem requires only that there is at least one forward facing shock in between.  Since $L$ is negative on $]\overline{\theta}_i, \theta_{i+1}[$, on any interval on which $U$ is constant $N$ is decreasing.  In a forward sector, this corresponds to $N$ increasing along particle paths of the gas particles.  Therefore, upon exiting a compression wave, the normal velocity is sonic, but as the gas particles continue traveling in the negative $\theta$ direction, the normal velocity increases and becomes supersonic, leading to the possibility of a forward facing shock, which upon exit the normal velocity will be subsonic.  Normal velocity can then increase along particle paths back to the sound speed, and the gas can enter another compression wave.
\begin{theorem} \mylabel{Lpos}
(See Figure \myref{wavespic}.) Suppose $I_i$ is a forward sector.  Then, on $]\theta_i, \overline{\theta}_i]$, exactly one of the following is true:
\begin{itemize}
\item $U$ is constant on either side of a forward facing shock,
\item $U$ is constant on either side of a forward expansion wave,
\item $U$ has an expansion wave on $[\alpha, \overline{\theta_i}]$ and is constant on $]\theta_i, \alpha[$,
\item $U$ has a normal shock (that is, L = 0) at $\overline{\theta}_i$ and is constant on $]\theta_i, \overline{\theta}_i[$ ,
\item $U$ is constant on $]\theta_i, \overline{\theta}_i]$.  
\end{itemize}
We have the similar statement if $I_i$ is a backward sector, for the interval $[\overline{\theta}_i, \theta_{i+1}[$. 
\end{theorem}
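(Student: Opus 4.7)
The plan is to establish a strong monotonicity of $N(\theta)-c(\theta)$ on $]\theta_i,\overline{\theta}_i]$ and use it to conclude that at most one wave (shock or expansion) can occur in that interval; the five listed cases then arise from placing the (possibly absent) single wave with respect to $\overline{\theta}_i$.

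First I would verify three properties of $N-c$. On any maximal constant stretch contained in $]\theta_i,\overline{\theta}_i[$, the strong equation $N_\theta=L$ (from conservation of mass with $\rho$ constant) combined with $L>0$ on the open interior (Lemma \ref{tangprop}) shows that $N-c$ is strictly increasing on each such stretch. On any forward Prandtl--Meyer wave, $N-c\equiv 0$ by definition. At a forward facing shock---the only type possible in a forward sector---the two-sided Lax inequalities of Lemma \ref{supersubsonic} give $(N-c)_-<0<(N-c)_+$, so $N-c$ jumps strictly upward from a negative to a positive value. Combined with the constant-neighborhood Theorems \ref{newthm2} and \ref{newthm3}, these facts show that $\theta\mapsto N(\theta)-c(\theta)$ is non-decreasing throughout $]\theta_i,\overline{\theta}_i]$, and strictly so except where it is identically zero on an expansion wave.

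Next I would show that at most one non-degenerate wave can appear. Since $N(\theta_i+)=0$ while $c>0$, we begin with $N-c<0$. Suppose that the first event is a shock at $\theta^\star$: by the monotonicity above, $N-c$ is strictly negative throughout $]\theta_i,\theta^\star[$, since any earlier shock would already force $N-c>0$ somewhere and any earlier expansion wave would require $N-c=0$ on an interval; hence by Theorem \ref{newthm1}, $U$ is constant on $]\theta_i,\theta^\star[$. After the shock $N-c>0$, and monotonicity rules out any subsequent shock (which would demand $N-c<0$ on its back side) and any subsequent expansion wave (which would demand $N-c=0$). The same reasoning applied to the first expansion wave $[\alpha,\beta]$ yields strict negativity on $]\theta_i,\alpha[$, vanishing on $[\alpha,\beta]$, and strict positivity on $]\beta,\overline{\theta}_i]$, again ruling out any other wave.

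Finally I read off the cases: no wave gives case (e); a shock strictly inside $]\theta_i,\overline{\theta}_i[$ gives case (a); a shock exactly at $\overline{\theta}_i$ yields case (d), because $[L]=0$ at shocks together with $L(\overline{\theta}_i)=0$ forces a normal shock; an expansion wave $[\alpha,\beta]$ with $\beta<\overline{\theta}_i$ gives case (b), while $\beta=\overline{\theta}_i$ gives case (c). The backward sector statement follows by the symmetric argument with $L<0$ and backward facing shocks. The principal obstacle is the monotonicity of $N-c$ across a shock---one needs not merely $N_+>c_+$ but also $N_-<c_-$, so that the jump actually crosses zero---and this two-sided Lax condition is exactly the content of Lemma \ref{supersubsonic}.
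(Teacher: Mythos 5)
Your proposal is correct and takes essentially the same approach as the paper: the paper's proof likewise combines $N_\theta = L > 0$ on constant stretches with the two-sided Lax conditions of Lemma \myref{supersubsonic} to show that the constant neighborhoods $]\sigma^-(\theta_0),\theta_0[$ and $]\theta_0,\sigma^+(\theta_0)[$ of any shock must extend to $\theta_i$ and past $\overline{\theta}_i$ respectively, and treats the shock-free case by the argument of Theorem \myref{onewave}. Your monotonicity of $N-c$ is simply a compact repackaging of these same ingredients, and it correctly handles the case split at $\overline{\theta}_i$ (normal shock via $[L]=0$) just as the paper does.
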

\begin{proof}
Suppose $U$ has a shock at $\theta_0 \in ]\theta_i, \overline{\theta}_i]$.  Then, we know that $N(\theta_0 -) < c(\theta_0-)$.  Recall that there exists $\sigma^-(\theta_0) < \theta_0$ such that $U$ is constant on $]\sigma^-(\theta_0), \theta_0[$, and that $N\big(\sigma^-(\theta_0)+\big) = 0$ or $c\big(\sigma^-(\theta_0)+\big)$.  However, $N_\theta = L$ on $]\sigma^-(\theta_0), \theta[$, and on this interval $L > 0$, and so $N$ decreases as $\theta$ decreases, and so $N\big(\sigma^-(\theta_0)+\big) = 0$, making $\sigma^-(\theta_0) = \theta_1$.  Therefore, there can be no shocks in $]\theta_i, \theta_0[$, and $U$ is constant on $]\theta_i, \theta_0[$ by Theorem \myref{newthm1}.  Similar arguments show that $\sigma^+(\theta_0) \geq \overline{\theta}_i$ (since for $N$ to be sonic it must decrease from $N(\theta_0) > c(\theta_0)$, which is impossible since $N_\theta = L > 0$ on $]\theta_0, \sigma^+(\theta_0)[$) and so either the first or fourth statement is true.\\

If there is not a shock, then $U$ is continuous on this interval, and similar arguments as in the proof of Theorem \myref{onewave} show that there can be at most one expansion wave, and we are done.  Similar arguments work in backward sectors.
\end{proof}
Examples with infinitely many shocks can be constructed (these theorems show that they must occur in the parts of the sectors where $L<0$), or with infinitely many shocks interspersed with compression waves (with the restriction that compression waves cannot occur consecutively, by Theorem \myref{onewave}).  Therefore, since infinitely many discontinuities may occur $BV$ is the sharpest commonly used function space we may use.

\section{Maximum number of contacts}
We note for a both rarefaction and compression waves that the velocity turns towards the origin as the gas particles travel through the wave.  This can be seen by manipulating
\begin{alignat}{1}
(\rho N)_\theta &= \rho L,\\
(\rho N^2 + p)_\theta &= 2\rho N L,
\end{alignat}
to obtain the following.
\begin{alignat}{1}
(\rho N)_\theta N + (\rho N)N_\theta + p_\theta &= 2 \rho N L,\\
\rho L N + (\rho N)N_\theta + p_\theta &= 2 \rho N L, \\
N N_\theta - LN &= -\frac{p_\theta}{\rho},\\
LN - N N_\theta &= \frac{p_\theta}{\rho}.
\end{alignat}
We consider the angle of the flow, $\phi := \angle(u,v)$, as in Figure \myref{anglefig} , as in \cite{LL}.

\begin{figure}[h] 
  \centering
  \vspace{-1cm}
  \def\svgwidth{180pt}
  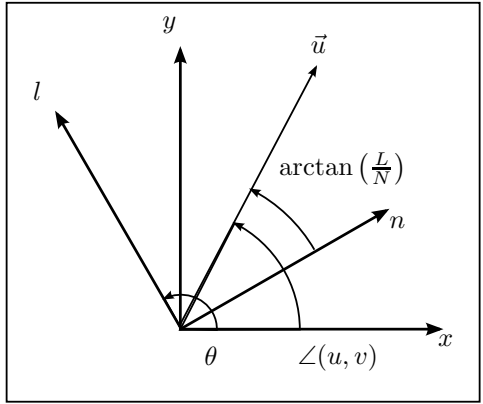
  \vspace{-1cm}
  \caption{Computing the angle of the flow in terms of $N, L,$ and $\theta$.  $n$ is the angular coordinate vector, and $l$ is the radial coordinate vector.}
  \mylabel{anglefig}
\end{figure}

Since we are considering $N>0$, $\angle(N,L) \in (-\pi/2, \pi/2)$, and so $\angle(N,L) = \arctan\left(\frac{L}{N}\right)$.  Then,
\begin{alignat}{1}
\angle(u,v) &= \theta - \frac{\pi}{2} + \arctan\left(\frac{L}{N}\right) \\
 &= \theta - \arctan\left(\frac{N}{L}\right).  
\end{alignat}

Then,
\begin{alignat}{1}
\phi_\theta &= 1 - \frac{1}{1 + \left(\frac{N}{L}\right)^2} \frac{L N_\theta - N L_\theta}{L^2} \\
&= 1 - \frac{LN_\theta + N^2}{L^2 + N^2}\\
&= \frac{L^2-LN_\theta}{L^2+N^2} \\
&= \frac{L(L-N_\theta)}{L^2+N^2} \\
&= \frac{L p_\theta}{\rho N (N^2+L^2)}.
\end{alignat}
However, recall that in a forward wave
\begin{alignat}{1}
\sgn(p_\theta)=\sgn(L),
\end{alignat}
and so $\phi_\theta$ is positive.  Therefore, as the gas particles travel through the shock, $\theta$ \emph{decreases}, and so $\angle(u,v)$ \emph{decreases} as well.  \\

For backward waves,

\begin{alignat}{1}
\phi = \theta - \arctan\left(\frac{N}{L}\right) + \pi,
\end{alignat}
giving the same expression for $\phi_\theta$.  But in this case
\begin{alignat}{1}
\sgn(p_\theta) = -\sgn(L),
\end{alignat}
but $N = -c$ and so $\phi_\theta$ is still positive.  However, for backward waves the gas particles move in the increasing $\theta$ direction, and so the flow still turns towards the origin as the gas particles travel through the wave.  See Figure \myref{twosectors} for some examples of waves.\\

For shocks, since $L$ is continuous and $|N|$ decreases as the gas particles pass through the shocks, the flow is turned away from the origin if $L$ is positive, and toward the origin if $L$ is negative. \\

\begin{figure}[h] 
  \centering
  \def\svgwidth{200pt}
  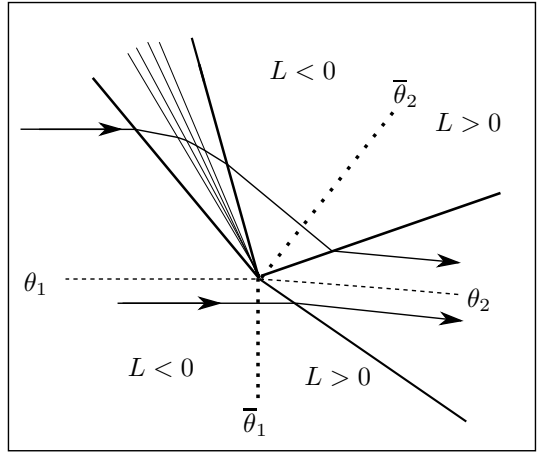
  \caption{An example with two sectors.  $I_1 = [\theta_1, \theta_2]$ is on the bottom and is a backward sector, $I_2 = [\theta_2, \theta_1]$ is on the top and is a forward sector.  In $I_1$, there is a single shock in the $L>0$ part.  In $I_2$, the gas passes through a shock, a compression wave, then a shock in the $L<0$ part, and a shock in the $L>0$ part.  As the gas particles travel through shocks, the flow is turned toward the shock line, and compression waves turn the flow toward the origin.  Rarefaction waves would turn the flow toward the origin as well.}
  \mylabel{twosectors}
\end{figure}

\begin{theorem}
For $\gamma = 1.4$, there can be a maximum of two sectors.  For other values of $\gamma > 1$, there can be up to three sectors, but there are no values of $\gamma > 1$ that lead to flows with more than three sectors. 
\end{theorem}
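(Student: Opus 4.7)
The plan is to track the flow angle $\phi := \angle(u,v)$ along a gas particle's path through each sector, bound the total turning in terms of $\gamma$, and combine with $\sum_i \Delta\theta_i = 2\pi$ to count sectors. In a forward sector $[\theta_i,\theta_{i+1}]$, the particle enters at $\theta_{i+1}$ with $N=0$ and $L(\theta_{i+1}^-)<0$ (by Lemma \myref{tangprop}), so $\phi_{\text{in}}=\theta_{i+1}+\pi$; it exits at $\theta_i$ with $L(\theta_i^+)>0$, so $\phi_{\text{out}}=\theta_i$. Hence along the particle path the required change in $\phi$ equals $-\pi - \Delta\theta_i$ modulo $2\pi$, where $\Delta\theta_i := \theta_{i+1}-\theta_i$, and the analogous computation holds for backward sectors. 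Summing over sectors, $\sum_i \Delta\theta_i = 2\pi$ will force $\min_i \Delta\theta_i \leq 2\pi/N_s$, and in that narrowest sector the required turning $\pi\pm\Delta\theta_i$ must be realized by the available waves and shocks.

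Next I would decompose the net turning using the formula $\phi_\theta = L\,p_\theta/[\rho N(N^2+L^2)]$ derived earlier in the section: continuous Prandtl--Meyer waves contribute turning with $\phi_\theta>0$, so along a particle path each wave rotates $\phi$ monotonically downward by at most the classical Prandtl--Meyer angle $\nu_\infty(\gamma):=\tfrac{\pi}{2}\bigl(\sqrt{(\gamma+1)/(\gamma-1)}-1\bigr)$, while each shock contributes a jump in $\phi$ whose sign equals $\sgn(L)$ at the shock and whose magnitude is bounded by the Busemann maximum-deflection angle $\sigma_{\max}(\gamma) = \arctan\bigl(1/\sqrt{\gamma^2-1}\bigr)$ obtained by maximising the polytropic shock-polar expression $\tan\delta=\sin(2\beta)/(\gamma+\cos(2\beta))$ in the strong-shock limit. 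Theorems \myref{Lpos} and \myref{onewave} then constrain the configuration: at most one shock or expansion in the $L>0$ portion, and shocks alternating with compressions in the $L<0$ portion with no two consecutive compressions. Using monotonicity of $p$ along the particle path through the $L<0$ portion together with the $L^\infty$ bound $p\le p_{\max}$, I would bound the cumulative downward turning contributed by that portion in terms of $\gamma$ and the pressure range.

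Putting these bounds together, I would argue that for $N_s\geq 4$ the shortest sector has $\Delta\theta_i \leq \pi/2$, so the demanded turning $\pi\pm\Delta\theta_i$ outstrips the combined budget of $\sigma_{\max}(\gamma) + \nu_\infty(\gamma)$ plus the cumulative $L<0$ contribution for every $\gamma>1$. For $N_s=3$ and $\gamma=1.4$ the narrowest sector has $\Delta\theta_i\leq 2\pi/3$; the short counterclockwise turn of magnitude $\geq\pi-2\pi/3 = \pi/3 = 60^\circ$ requires the single $L>0$ shock to supply at least $60^\circ$ of deflection, but $\sigma_{\max}(1.4) = \arctan(1/\sqrt{0.96}) \approx 45.58^\circ < 60^\circ$; the complementary long clockwise turn of magnitude $\geq\pi+2\pi/3$ is in turn excluded by the cumulative bound on the $L<0$ portion, yielding $N_s\leq 2$. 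Writing the threshold $\sigma_{\max}(\gamma)=\pi/3$ gives $\gamma=2/\sqrt{3}\approx 1.155$, explaining why smaller $\gamma$ can realise three sectors while $\gamma=1.4$ cannot.

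The main obstacle will be controlling the cumulative angular contribution of the possibly infinite sequence of shocks and compressions in the $L<0$ portion of a sector. The non-consecutive-compressions constraint of Theorem \myref{onewave}, the monotone pressure rise along the particle path, and the uniform $L^\infty$ bound on $p$ are the essential ingredients that close this bound and make it a concrete function of $\gamma$; without such a bound the counting argument in the final step would be open-ended.
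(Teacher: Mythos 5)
Your core mechanism is the paper's: in the narrowest of $N_s$ sectors the flow must be deflected away from the origin by at least $\pi - 2\pi/N_s$, only the single shock permitted in the $L>0$ part by Theorem \myref{Lpos} can produce such a deflection (waves and $L<0$ shocks turn the flow toward the origin), and the deflection of one shock is bounded by the strong-shock maximum $\arctan(1/\sqrt{\gamma^2-1}) = \arcsin(1/\gamma)$, giving your threshold $\gamma = 2/\sqrt{3}$ and the comparison $45.5^\circ$ vs.\ $60^\circ$ for $\gamma = 1.4$. However, two of your steps have real gaps. The step you yourself flag as the main obstacle --- excluding the ``long clockwise turn'' by a cumulative bound on the downward turning in the $L<0$ portion in terms of the pressure range --- is both unproven and the wrong tool: such a bound would depend on the $L^\infty$/phase-space constants rather than on $\gamma$ alone, and it is unnecessary. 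Inside a forward sector $N>0$, so the velocity direction lies strictly between the inward radial direction $\theta-\pi$ and the outward radial direction $\theta$; since every shock deflection has magnitude less than $\pi/2$, a continuous branch of $\phi$ along the particle path stays in this moving window, which is contained in the interval $(\theta_i-\pi,\ \theta_{i+1})$ of length $\pi+\Delta\theta_i < 2\pi$. Hence the branch runs from $\theta_{i+1}-\pi$ at entry to $\theta_i$ at exit, the net change is exactly $\pi-\Delta\theta_i$ away from the origin, and no wrap-around (hence no mod-$2\pi$ ambiguity) is possible. This is precisely what the paper means by ``the flow needs to be turned away from the origin by an angle of $\alpha$,'' and it removes the open-ended counting you worried about.

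Second, your $N_s \ge 4$ step compares the demanded turning with a ``combined budget'' $\sigma_{\max}(\gamma)+\nu_\infty(\gamma)$ plus the cumulative $L<0$ contribution and asserts this is exceeded for every $\gamma>1$; as stated this fails, since $\nu_\infty(\gamma)\to\infty$ as $\gamma\searrow 1$. Prandtl--Meyer waves and $L<0$ shocks turn the flow toward the origin, so they cannot contribute to the required away-from-origin deflection at all; the only relevant budget is the single-shock maximum $\arcsin(1/\gamma) < \pi/2$, which by itself rules out four or more sectors (required deflection at least $\pi/2$) for every $\gamma>1$, exactly as in the paper. With these two corrections your argument collapses onto the paper's proof; note also that, like the paper, you only gesture at the existence of three-sector flows for $1<\gamma<2/\sqrt{3}$, and that the fact that finite-Mach-number deflections lie below the $M\to\infty$ limit needs the reference to Courant--Friedrichs rather than just the strong-shock computation.
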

\begin{proof}
Choose coordinates so that there is a forward sector $I = [\alpha, \pi]$ where $0 < \alpha < \pi$, so that $N(\pi) = N(\alpha) = 0$.  Then, by Lemma \myref{tangprop} $L(\alpha+) > 0$, $L(\pi-) < 0$, and so the flow needs to be turned away from the origin by an angle of $\alpha$.  Following \myref{tangprop}, denote $\overline{\theta}$ the unique value between $\alpha$ and $\pi$ such that $L(\overline{\theta}) = 0$, and recall that $L$ is positive on $]\alpha, \overline{\theta}[$, and negative on $]\overline{\theta}, \pi[$.  \\

By Theorem \myref{onewave}, the discussion after it, and the discussion preceding this theorem, any shocks or compression waves on $]\overline{\theta}, \pi[$ turn the flow towards the origin.  Therefore, $\angle\big(u(\overline{\theta}),v(\overline{\theta})\big) \leq 0$.  Since we are interested in finding the maximum possible $\alpha$ that the flow can be turned upwards, the best possible situation is for there to be no compression waves or shocks on $]\overline{\theta},\pi[$, which yields $\overline{\theta} = \frac{\pi}{2}, \angle\big(u(\pi/2),v(\pi/2)\big) = 0$.\\

If there is a shock at $\frac{\pi}{2}$, then it is a normal shock and so the flow is absolutely subsonic, and thus constant, for $]\alpha, \frac{\pi}{2}[$.  Therefore $\alpha = 0$ since the flow can never be turned away from the origin. \\

Therefore, to accomplish the maximum upwards turning, the flow should be constant on $[\frac{\pi}{2}, \pi[$.  By Theorem \myref{Lpos}, the flow is either constant (again resulting in $\alpha = 0$), has exactly one rarefaction, or exactly one shock on $]\alpha, \frac{\pi}{2}[$.  A rarefaction wave turns the flow towards the origin, resulting in $\alpha < 0$, and so there must be a single shock to accomplish $\alpha > 0$.  \\

Using the well known $\theta-\beta-M$ equation (see \cite{krause}, Chapter 4) to relate the incident Mach number $M := \frac{|\vec{u}|_+}{c_+}$, the turning angle $\alpha$, and the shock angle $\theta$, we have that
\begin{alignat}{1}
\alpha = \arctan \left( \frac{2 \cot \theta (M^2 \sin^2\theta -1)}{M^2(\gamma+\cos(2 \theta))+2}\right).
\end{alignat}
It is well known (see \cite{courant-friedrichs}) that the curves $\alpha(\theta)$ for fixed values of $M$ all lie below the limiting case $M\rightarrow \infty$, and solving for the maximum $\alpha$ yields
\begin{alignat}{1}
\alpha_\textrm{max} = \arcsin \left( \frac{1}{\gamma} \right).
\end{alignat}

The flow can only be turned upward when $L>0$, so there can never be more than three sectors since $\alpha_\textrm{max} = \frac{\pi}{2}$ is only attained in the limit as $\gamma \searrow 1$.    For $\gamma = 1.4$, $\alpha_\textrm{max} \approx 45.5 ^\circ$, and so the flow cannot turn the required $60 ^\circ$ needed to have more than two sectors.  For $1 < \gamma < 1.15$, $\alpha_{\max} > 60 ^\circ$, and so there will exist finite incoming Mach numbers for which the flow can turn $60 ^\circ$, allowing for the existence flows with three sectors for some values of $\gamma >1$.
\end{proof}

\begin{figure}[h]
  \centering
  \def\svgwidth{200pt}
  \vspace{-1cm}
  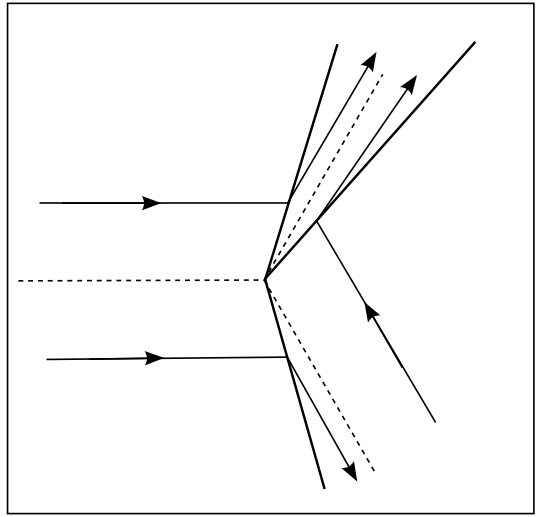
  \caption{For $\gamma < 1.15$, the maximum turning angle is greater than $60 ^\circ$.  Therefore there exist flows with three contact discontinuities, such as the one above.  In this example each sector has one shock in the region where $L > 0$, causing the flow to turn away from the origin.}
   \mylabel{threesectors}
\end{figure}

\newpage
 \providecommand{\bysame}{\leavevmode\hbox to3em{\hrulefill}\thinspace}
\providecommand{\MR}{\relax\ifhmode\unskip\space\fi MR }
\providecommand{\MRhref}[2]{%
  \href{http://www.ams.org/mathscinet-getitem?mr=#1}{#2}
}
\providecommand{\href}[2]{#2}

\bibliographystyle{amsalpha}

\end{document}